\newtheorem{theorem}{Theorem}[section]
\newtheorem{corollary}[theorem]{Corollary}
\newtheorem{lemma}[theorem]{Lemma}
\newtheorem{proposition}[theorem]{Proposition}
\theoremstyle{definition}
\newtheorem{definition}[theorem]{Definition}
\newtheorem{example}[theorem]{Example}
\newtheorem{remark}[theorem]{Remark}
\DeclareExpandableDocumentCommand{\IfNoValueOrEmptyTF}{mmm}
 {
  \IfNoValueTF{#1}{#2}
   {
    \tl_if_empty:nTF {#1} {#2} {#3}
   }
 }
\let\pmold\pm
\newcommand\cev[1]{\overleftarrow{#1}}
\newcommand{\alphabet}{\mathcal{A}}
\DeclareDocumentCommand\pv{s o}{\IfBooleanTF{#1}{{\cev{\mathsf{p}}}}{\mathsf{p}}\IfNoValueF{#2}{^{(#2)}}}
\DeclareDocumentCommand\tv{s o}{\IfBooleanTF{#1}{{\cev{\mathsf{q}}}}{\mathsf{q}}\IfNoValueF{#2}{^{(#2)}}}
\DeclareDocumentCommand\tree{o}{T\IfNoValueF{#1}{^{(#1)}}}
\DeclareDocumentCommand\tshift{o o}{\mathcal{T}\IfNoValueOrEmptyTF{#1}{}{^{(#1)}}_{\IfNoValueF{#2}{#2}}}
\DeclareDocumentCommand\aprod{o o}{X\IfNoValueOrEmptyTF{#1}{}{^{\times #1}}_{\IfNoValueF{#2}{#2}}}
\DeclareDocumentCommand\level{o m o}{\Xi\IfNoValueF{#1}{^{(#1)}}_{#2\IfNoValueF{#3}{:#3}}}
\DeclareDocumentCommand\lattice{o m o}{\Delta\IfNoValueF{#1}{^{(#1)}}_{#2\IfNoValueF{#3}{:#3}}}
\DeclareDocumentCommand\block{m o}{B_{#1\IfNoValueF{#2}{:#2}}}
\DeclareDocumentCommand\pblock{m o}{Z_{#1\IfNoValueF{#2}{:#2}}}
\DeclareDocumentCommand\dv{m o}{\tau_{#1\IfNoValueF{#2}{:#2}}}
\DeclareDocumentCommand\dvset{m o}{D_{#1\IfNoValueF{#2}{:#2}}}
\newcommand{\norm}[1]{|#1|}
\newcommand{\Norm}[1]{\left\Vert#1\right\Vert}
\DeclareDocumentCommand\pm{s o}{\IfBooleanTF{#1}{{\cev{\mathsf{P}}}}{\mathsf{P}}\IfNoValueF{#2}{^{(#2)}}}
\DeclareDocumentCommand\tm{s o}{\IfBooleanTF{#1}{{\cev{\mathsf{Q}}}}{\mathsf{Q}}\IfNoValueF{#2}{^{(#2)}}}
\DeclareDocumentCommand\qm{s o}{\IfBooleanTF{#1}{{\cev{\mathsf{R}}}}{\mathsf{R}}\IfNoValueF{#2}{^{(#2)}}}
\DeclareDocumentCommand\trm{m o}{\eta_{#1\IfNoValueF{#2}{:#2}}}
\DeclareDocumentCommand\trmset{m o}{S_{#1\IfNoValueF{#2}{:#2}}}
\DeclareDocumentCommand\proddom{s m o}{\IfBooleanTF{#1}{W}{W}_{#2\IfNoValueF{#3}{:#3}}}
\newcommand{\onevec}{\mathbbm{1}}
\newcommand{\stdvec}[1]{\mathsf{e}_{#1}}
\newcommand{\DKL}{D_{\mathrm{KL}}}
\newcommand{\alphpv}{\Gamma}
\newcommand{\alphsm}{\Upsilon}
\newcommand{\incmat}{\mathbf{M}}
\newcommand{\transmat}{M}
\newcommand{\initvec}{\pi}
\newcommand{\obsfunc}{A}
\subjclass[2020]{60J10, 37B10, 28A80}
\title[Hausdorff dimension of irreducible Markov hom tree-shifts]{Hausdorff dimensions of irreducible Markov hom tree-shifts}
\author[Jung-Chao Ban]{Jung-Chao Ban}
\address[Jung-Chao Ban]{Department of Mathematical Sciences, National Chengchi University, Taipei 11605, Taiwan, ROC.}
\address{Math. Division, National Center for Theoretical Science, National Taiwan University, Taipei 10617, Taiwan. ROC.}
\email{jcban@nccu.edu.tw}
\author[Guan-Yu Lai]{Guan-Yu Lai}
\address[Guan-Yu Lai]{Department of Mathematical Sciences, National Chengchi University, Taipei 11605, Taiwan, ROC.}
\email{gylai@nccu.edu.tw}
\author[Yu-Liang Wu]{Yu-Liang Wu}
\address[Yu-Liang Wu]{Department of Mathematical Sciences, P.O. Box 3000, 90014 University of Oulu, Finland}
\email{Yu-Liang.Wu@oulu.fi}
\begin{document}

\begin{abstract}
    This paper features a Cram\'er's theorem for finite-state Markov chains indexed by rooted $d$-trees, obtained via the method of types in the classical analysis of large deviations. Along with the theorem comes two applications: an almost-sure type convergence of sample means and a formula for the Hausdorff dimension of the symbolic space associated with the irreducible Markov chain.
\end{abstract}
\maketitle
\section{Introduction}
This paper presents a version of Cram\'er's theorem for finite-state Markov chains indexed by $d$-trees, accompanied by a number of folklore theorems and an exploration of their connections to dimension theory.

Let $\alphabet$ be a state space that is at most countable. Recall that a classical Markov chain is an $\alphabet$-valued stochastic process $(X_n)_{n \in \mathbb{N}}$ satisfying the \emph{Markov property}: For any $n \in \mathbb{N}$ and $a, b \in \alphabet$, the finite-dimensional distributions of the process satisfy
\[
\mathbb{P}(X_n=b|X_{n-1}=a, X_{n-2}, \cdots, X_0) = \mathbb{P}(X_n=b|X_{n-1}=a) = \transmat_{a,b}
\]
with the \emph{transition probabilities} $\{\transmat_{a,b}: a,b \in \alphabet\}$ independent of $n$. In this context, the transition probabilities satisfy $\sum_{b \in \alphabet} \transmat_{a,b} = 1$ for all $a \in \alphabet$, forming a stochastic matrix $\transmat = (\transmat_{a,b})_{a,b \in \alphabet}$ known as the \emph{transition matrix}. Additionally, the law of the initial state $X_0$ can be represented as a probability vector $\initvec = (\initvec_a)_{a \in \alphabet} = (\mathbb{P}(X_0 = a))_{a \in \alphabet}$, referred to as the \emph{initial distribution}. For example, a lattice random walk can be modeled as a Markov chain whose state space $\alphabet$ consists of all nodes in the lattice and $X_n$ is the position at time $n$ with the transition probability $M_{a,b}$ specifying the probability to move from position $a$ to $b$. Extending this paradigm, Benjamini and Peres in \cite{Benjamini1994} introduced the concept of generalized Markov chains as $\alphabet$-valued processes $(X_g)_{g \in \tree}$ indexed by some generalized ``time'' $\tree$. In this framework, $\tree$ is a \emph{rooted tree}, that is, a locally finite, connected acyclic graph that features a distinguished vertex $\epsilon$ called \emph{root}. For $g, h \in \tree$, we write $g \le h$ if $g$ lies on the (unique) path from the root $\epsilon$ to $h$. We denote by $\tilde{g}$ the parent of $g$ with $\tilde{g} \le g$, and define $g \wedge h$ as the farthest vertex on both paths from $\epsilon$ to $h$ and from $\epsilon$ to $g$. A tree-indexed Markov chain is then defined as follows.
\begin{definition} \label{def:Markov-chain}
    Let $\alphabet$ denote the state space, $\tree$ a rooted tree, $\transmat \in \mathbb{R}_{\ge 0}^{\alphabet \times \alphabet}$ a stochastic matrix, and $\initvec \in \mathbb{R}_{\ge 0}^{\alphabet}$ a probability vector. A \emph{Markov chain indexed by $\tree$} with transition matrix $\transmat$ and initial distribution $\initvec$ is an $\alphabet$-valued stochastic process $(X_g)_{g \in \tree}$ satisfying $\mathbb{P}(X_\epsilon = a) = \initvec_a$ for all $a \in \alphabet$ and the Markov property: For all $a, b \in \alphabet$,
    \begin{equation} \label{eq:Markov_assumption}
        \mathbb{P}(X_{g}=b|X_{\tilde{g}}=a, X_{h} \text{ for } g \wedge h \le \tilde{g})=\mathbb{P}(X_{g} = b|X_{\tilde{g}} = a)=\transmat_{a, b},
    \end{equation} 
\end{definition}
\noindent Notably, the underlying probability space $(\Omega, \mathcal{F}, \mathbb{P})$ of a $\tree$-indexed Markov chain can be realized using a standard Borel space. Specifically, by defining $\Omega = \alphabet^{\tree}$, $\mathcal{F}$ as the Borel $\sigma$-algebra, and $X_g(t) = t_g$, the Markov property together with the assumption of initial distribution uniquely determines a Borel probability measure (denoted also by $\mathbb{P}$ by a slight abuse of notation). This measure, referred to as a \emph{Markov measure}, is supported within 
\begin{equation} \label{eq:tree-shifts}
    \tshift[][\incmat] = \{t \in \alphabet^{\tree}: \incmat_{t_{\tilde{g}},t_g} = 1 \text{ for all } g \in \tree \setminus \{\epsilon\}\},
\end{equation}
where $\incmat$ is the \emph{incidence matrix} associated with $\transmat$, a matrix satisfying $\incmat_{a,b} = 0$ or $1$ depending on $\transmat_{a,b} = 0$ or $> 0$, respectively. In particular, when $\tree = \mathbb{Z}_+$, the measure $\mathbb{P}$ reduces to a classical Markov measure and the space $\tshift[][\incmat]$ to a Markov subshift, making them special cases of the broader framework. Motivated by this observation, the present study examines and compares tree-indexed and conventional Markov chains through the lenses of large deviation theory and dimension theory, and our focus is on the class of finite-state Markov chains indexed by rooted $d$-trees ($d \ge 2$), each corresponding to the Cayley graph of the free semigroup generated by $\Sigma = \{1,2,\cdots,d\}$:
\[
\tree=\bigcup_{i=0}^{\infty} \Sigma^i := \{\epsilon\} \cup \bigcup_{i=1}^{\infty} \{g_1 g_2 \cdots g_i: g_j \in \Sigma, \forall 1 \le j \le i \},
\]
with $\Sigma^0=\{\epsilon\}$ being part of our convention.

Despite their introduction in \cite{Benjamini1994} in a general form, special cases of tree-indexed Markov chains had appeared in earlier studies, typically as special cases of i.i.d.~tree-indexed processes. For example, first-passage percolation on trees (cf.~\cite{Lyons1992}) is modeled by tree-indexed i.i.d.~random variables where each variable represents the ``passage time'' to traverse between vertices in $\tree$, and the primary focus is on the asymptotic passage time to reach a vertex from the root. For developments in this area, see for example \cite{Benjamini1994a} and see \cite{Fan2001} for a multifractal analysis of first-passage percolation. Further discussion on tree-indexed i.i.d.~variables and some related fields can be found in \cite[Section 1]{Benjamini1994a} and \cite{Benjamini1996}.

Beyond the i.i.d.~setting, the tree-indexed Markov chains, as explored in \cite{Benjamini1994a}, are closely related to tree-indexed random walks on groups or general countable graphs, where the rooted tree plays the role of ``generalized time'' as illustrated in the previous paragraph. This approach addresses a key challenge in the analysis of random walks on large groups or graphs, where a single sample path visits only a relatively small portion of the space and thus provides limited information. With the abundance of sample paths offered by the generalized time, tree-indexed random walks provide a more comprehensive view of their underlying structure.

Another frequently explored aspect of non-conventional Markov chains is their entropy, which motivates the first part of the present work. This line of research originates from the study of statistical mechanics on trees, as initiated in \cite{Preston1974} and \cite{Spitzer1975}. For related studies, see also \cite{Brightwell2002,burton1995variational,Eggarter1974,Georgii2011}. Continuing in this direction, the article focuses on both the typical and large deviation behavior of configurations. Specifically, typical behavior is characterized by an ergodic theorem (also known as the Shannon-McMillan-Breiman theorem or the law of large numbers, depending on the context), while large deviation behavior is addressed through a Cram\'er’s theorem. This focus is influenced by \cite{Berger1990} and its successors, where the researchers examined the entropy of the stationary random fields on both rooted and unrooted $d$-trees. Subsequent works by the same authors \cite{ye1996ergodic,ye1998information}, building upon Pemantle’s combinatorial approach \cite{pemantle1992automorphism}, established a Shannon-McMillan-Breiman theorem with convergence in probability for ergodic random fields on homogeneous trees. Further developments can be found in \cite{Huang2019,shi2022generalized,Yang2003,Yang2000,Weigu-2007} and references therein, with noteworthy contributions regarding deviation inequalities provided by \cite{Peng2010,shi2022class,Yang2012}.

Broadly speaking, the previously mentioned studies center on the asymptotic behavior of the following empirical average over the initial $n$-subtrees $\lattice{n}:=\cup_{i=0}^{n} \Sigma^i$: 
\begin{equation} \label{eq:pair_empirical_averages}
    Y_n = \frac{1}{\norm{\lattice{n}}} \sum_{g \in \lattice{n} \setminus \{\epsilon\}} \log \obsfunc_{X_{\tilde{g}}, X_g},
\end{equation}
where $\obsfunc \in \mathbb{R}_{>0}^{\alphabet \times \alphabet}$ is the \emph{weight} matrix and $\norm{\cdot}$ denotes the cardinality of a set, with $\norm{\Sigma^n} = d^n$ and $\norm{\lattice{n}} = \sum_{i=0}^n {\norm{\Sigma^i}} = \frac{d^{n+1}-1}{d-1}$, in particular. As it turns out, the empirical averages $Y_n$ exhibit an asymptotically periodic structure whose period is closely linked to that of the initial state. Recall that the \emph{period} of a state $a$ in a Markov chain is defined as $p = \gcd\{n \in \mathbb{N}: (\transmat^n)_{a,a} > 0\}$, with $p = \infty$ if no such $n$ exists. Recall also that a nonnegative matrix $A \in \mathbb{R}_{\ge 0}^{\alphabet \times \alphabet}$ is \emph{irreducible} if for all $a, b \in \alphabet$ there exists $n \in \mathbb{N}$ such that $(A^n)_{a,b} > 0$, and is said to be \emph{primitive} if the condition holds for all large $n$. A Markov chain is said to be \emph{irreducible} (respectively, \emph{ergodic}) if it has an irreducible (respectively, primitive) transition matrix. As readily checked facts, all states in an irreducible Markov chain share a common period, which is referred to as the period of the Markov chain, and the period of an ergodic Markov chain is $1$. In summary, the works listed in the previous paragraph successfully recover several classical theorems for ergodic Markov chains, but leave open the question of how the results extend explicitly to more general systems, in which the behavior of $Y_n$ differs significantly from its conventional counterpart. Particularly, unlike classical Markov chains, almost sure convergence for \eqref{eq:pair_empirical_averages} cannot be guaranteed even for irreducible Markov chains (see Example \ref{ex:examples}). In light of this, the conditional averages $(Y_{p n+j}|X_{\epsilon} = a_0)$ ($n \in \mathbb{N}$) with initial state $a_0$ of finite period $p$ will be studied in place of $Y_n$ throughout the work. Agreeing with this philosophy, our first result is an analog of Cram\'er's theorem, which is presented after the introduction of necessary definitions and notations below.

Recall that given a positive real sequence $(\varepsilon_n)_{n \in \mathbb{N}}$ tending to zero and a lower semicontinuous function $I: \mathbb{R} \to [0, \infty]$, a sequence of real-valued random variables $(X_n)_{n \in \mathbb{N}}$ is said to satisfy the \emph{large deviation principle} with speed $(\varepsilon_n)_{n \in \mathbb{N}}$ and rate $I$ if for all Borel set $S \subset \mathbb{R}$,
\begin{equation*}
    \sup_{\alpha \in \mathring{S}} - I(\alpha) \le \liminf_{n \to \infty} \varepsilon_n \log \mathbb{P}(X_n \in S) \le \limsup_{n \to \infty} \varepsilon_n \log \mathbb{P}(X_n \in S) \le  \sup_{\alpha \in \overline{S}} - I(\alpha).
\end{equation*}
For matrices $A, B \in \mathbb{R}^{\alphabet \times \alphabet}$ (and similarly $\mathbb{R}^{\alphabet}$) and $r > 0$, we denote by $A \odot B :=(A_{a,b} B_{a,b})_{a,b \in \alphabet}$ the Hadamard product of $A$ and $B$, and by $A^{\odot r} := (A_{a,b}^r)_{a,b \in \alphabet}$ the element-wise power of $A$ whenever well-defined. With these notations, we define the operator $\Psi_{A,r}: \mathbb{R}_{\ge 0}^{\alphabet} \to \mathbb{R}_{\ge 0}^{\alphabet}$ by
\begin{equation} \label{eq:def_Psi}
    \Psi_{A, r}(x) = (A x)^{\odot r}.
\end{equation} 
\begin{theorem}[Cram\'er] \label{thm:1} 
    Let $\tree$ be a rooted $d$-tree, $(X_g)_{g \in \tree}$ be a finite-state Markov chain with transition matrix $\transmat$, and $(Y_n)_{n \in \mathbb{N}}$ be the empirical average with respect to the weight $A \in \mathbb{R}_{> 0}^{\alphabet \times \alphabet}$ as in \eqref{eq:pair_empirical_averages}. If $a_0 \in \alphabet$ is a state of finite period $p$, then $(Y_{p n + j}|X_{\epsilon} = a_0)_{n \in \mathbb{N}}$ satisfy the large deviation principle with speed $(\norm{\lattice{p n + j}}^{-1})_{n \in \mathbb{N}}$ and rate 
    \[
    \Lambda^*_{j}(\alpha) = \sup_{\mu \in \mathbb{R}} [\mu \alpha - \Lambda_j(\mu)],
    \]
    where, with $\onevec \in \mathbb{R}^{\alphabet}$ denoting the all-one vector,
    \begin{equation*}
        \Lambda_j(\mu) = \lim_{n \to \infty} \frac{\log ((\Psi_{\transmat \odot \obsfunc^{\odot \mu},d})^{pn+j}(\onevec))_{a_0}}{\norm{\lattice{pn+j}}}.
    \end{equation*}
\end{theorem}
\noindent A few comments about the theorem should be mentioned. First of all, we should emphasize that assuming the initial state has finite period involves no loss generality in the characterization of $Y_n$. Precisely, we observe that the random tree 
\[
\tilde{\tree} = \{g \in \tree: X_h \text{ has infinite period for all } h \le g, h \ne g\}
\]
is almost surely a subtree of $\lattice{\norm{\alphabet}}$, and its leaf vertices $\partial \tilde{\tree} = \{h \in \tilde{\tree}: h i \notin \tilde{\tree}, \forall i \in \Sigma\}$ are associated only with states of finite period. This allows us to pass Theorem \ref{thm:1} to $(Y_n|X_g, g \in \partial \tilde{\tree})$. Second, a key challenge in proving Theorem \ref{thm:1} lies in the uncertainty of differentiability of $\Lambda_j$, and hence more sophisticated arguments, such as G\"artner-Ellis theorem, do not apply. This forces us to resort to a combinatorial reasoning. Finally, results on large deviations of tree-indexed Markov chains are also considered in \cite{DEMBO2005} for trees generated by a Galton-Watson process with a positive probability of producing zero offspring, which differs from our setting. 

Despite its tediousness, an advantage of our combinatorial approach lies in that it almost immediately gives the following version of ergodic theorem for irreducible Markov chains.
\begin{theorem} \label{thm:2}
    Under the assumptions of Theorem \ref{thm:1}, if the Markov chain is irreducible of period $p$, then for any state $a_0 \in \alphabet$,
    \begin{align*}
        & \lim_{n \to \infty} (Y_{p n + j} | X_{\epsilon} = a_0) = \sum_{i=0}^{p-1} \frac{d^{-i}}{\sum_{\ell=0}^{p-1} d^{-\ell}} \sum_{a,b \in \alphabet} \initvec^{(i+1-j)}_{a} \transmat_{a,b} \log \obsfunc_{a,b} \qquad \mathbb{P}\text{-a.s.},
    \end{align*}
    where $(\initvec^{(i)})_{i \in \mathbb{Z}}$ are the unique left probability eigenvectors of $\transmat^p$ satisfying $\initvec^{(i)} = \initvec^{(i+1)} \transmat$ and $\initvec^{(i)}_{a_0} > 0$ if and only if $i \equiv 0 ~(\bmod\ p)$.
\end{theorem}
\noindent This confirms the cyclic structure of convergence of $Y_n$ and complements the state-of-the-art results by extending the limit theorems from primitive transition matrices to irreducible ones. This periodic pointwise convergence will be utilized in the subsequent sections.

The second part of the article is motivated by another intriguing aspect of the tree-indexed Markov chains: the ``size'' of the outcome space of labeled trees $\tshift[][\incmat]$ (recall \eqref{eq:tree-shifts}). This outcome space, also known as a Markov hom tree-shift, represents a special family of symbolic systems introduced by Aubrun and B\'eal \cite{Aubrun2009} that can be viewed as a generalization of the one-sided Markov subshifts, considering that the latter could be defined using the former with the underlying tree $\tree = \mathbb{Z}_+$. Building  upon this concept, Petersen and Salama \cite{Petersen2018b,Petersen2020} introduced the topological entropy for such spaces as an analog to topological entropy for traditional shift spaces to investigate their complexity. Results therein were later generalized to asymptotic pressure for a broad class of systems \cite{Petersen2021}. Notably, the topological entropy could also be interpreted as the box-counting dimension under a proper metric (see \eqref{eq:tree-shift_metric}). Motivated by this observation, the present paper aims to investigate the Hausdorff dimension under the specified metric and relate such a quantity to the eigenvalues of certain nonlinear transfer operators: Drawing inspiration from the classical thermodynamic formalisms, the candidates of transfer operator $\mathcal{L}_{A,r}: \mathbb{R}_{\ge 0}^\alphabet \to \mathbb{R}_{\ge 0}^\alphabet$ for $r=(r_0,r_1,\cdots,r_{p-1}) \in \mathbb{R}_{>0}^{p}$ and $A \in \mathbb{R}_{\ge 0}^{\alphabet \times \alphabet}$ are defined as
\begin{equation} \label{eq:transfer_operator}
    \mathcal{L}_{A,r}(x) = \Psi_{A,r_{p-1}} \circ \Psi_{A,r_{p-2}} \circ \cdots \circ \Psi_{A,r_0}(x),
\end{equation}
with each $\Psi_{A,r_i}(x)$ defined in \eqref{eq:def_Psi}. The Hausdorff dimension is then related to the eigenvalues of the nonlinear operator, as is seen in the following theorem. 
\begin{theorem} \label{thm:Hausdorff_dim}
    Let $\tshift[][\incmat]$ be a Markov hom tree-shift with an irreducible incidence matrix $\incmat$ of period $p$. Then,
    \begin{equation} \label{eq:Hausdorff_dimension}
        \dim_{H} \tshift[][\incmat] = \min_{r \in \mathcal{R}_{p,d}} \left(\sum_{\ell=0}^{p-1} \prod_{i=0}^{\ell} r_{i}^{-1}\right)^{-1} \cdot \log \rho_*(\mathcal{L}_{\incmat,r}),
    \end{equation}
    where 
    \begin{gather*}
        \mathcal{R}_{p,d} = \left\{r \in (0,d]^p: \prod_{i=0}^{p-1} r_i = 1\right\}, \\
        \rho_*(\mathcal{L}_{\incmat,r}) = \inf \{\alpha > 0: \mathcal{L}_{\incmat,r}(u) = \alpha \cdot u \in \mathbb{R}^{\alphabet} \setminus \{0\}\}.
    \end{gather*}
    In particular, if $\incmat$ is a primitive matrix, then $p=1$, $\mathcal{L}_{\incmat,r} = \incmat$ for $r \in \mathcal{R}_{1,d}=\{1\}$, and $\dim_{H} \tshift[][\incmat] = \log \rho_*(\incmat) = \log \rho(\incmat)$, where $\rho(\incmat)$ is the spectral radius of $\incmat$.
\end{theorem}
\noindent It is noteworthy that the theorem formally generalizes the classical thermodynamic formalisms. Specifically, when $d =1$, the set $\mathcal{R}_{p,d}$ simplifies to $ = \{(1,1, \cdots,1)\}$, leading formally to the conclusion that $\dim_H \tshift[][\incmat] = p^{-1} \log \rho(\incmat^p) = \log \rho(\incmat)$, a known result for irreducible Markov subshifts. 

At this stage, two main concerns arise regarding Theorem \ref{thm:Hausdorff_dim}: the existence of nonnegative eigenvectors and the attainment of $\min_{r \in \mathcal{R}_{p,d}}$. To address the former issue, the proof of the theorem is deferred to Section \ref{sec:Perron-Frobenius}, where a brief overview of nonlinear Perron-Frobenius theory is provided to establish the required existence. The latter issue is discussed in Lemma \ref{lem:periodic_maximality}, which forms the cornerstone in the proof of Theorem \ref{thm:Hausdorff_dim}. Notably, the proposed formula of the Hausdorff dimension can be further adapted to serve as an upper bound for any Markov hom tree-shifts, leading to the following corollary.
\begin{corollary} \label{cor:upper_bound_spectral_radius}
    Let $\tshift[][\incmat]$ be a Markov hom tree-shift. Then, $\dim_{H} \tshift[][\incmat] \le \log \rho(\incmat)$. In particular, for any irreducible $\incmat$, the equality holds if and only if $\incmat$ has uniform row sums.
\end{corollary}
To provide an overview of our proof strategy, it is worth mentioning that the upper bound of the Hausdorff dimension is derived via constructing efficient covers of $\tshift[][\incmat]$, while the lower bound is obtained by applying the classical mass distribution principle to the ``optimal'' Markov measure. Hence, the main contributions of this work lie not only in providing a formula for Hausdorff dimension but also in establishing an analog of the variational principle for the tree-shifts.

The organization of the paper is as follows. In Section \ref{sec:preliminaries}, we introduce preliminary definitions and establish the general conventions. Section \ref{sec:large_deviation} is devoted to the proofs of Theorems \ref{thm:1} and \ref{thm:2}. Section \ref{sec:Hausdorff_dimension_irr} presents the proposed variational formula for Hausdorff dimensions (Theorem \ref{thm:Hausdorff_dim}), which involves a recapitulation of the nonlinear Perron-Frobenius theory (Section \ref{sec:Perron-Frobenius}), followed by necessary lemmas on duality (Section \ref{sec:some_lemmas}) and lower and upper bounds for Hausdorff dimensions (Sections \ref{sec:lower_bound} and \ref{sec:upper_bound}). Section \ref{sec:general_upper_bound} concludes with a discussion of a general upper bound for the Hausdorff dimension and an illustrative example validating our dimension formula. Finally, the paper concludes with a brief discussion in Section \ref{sec:discussion}.

\section{Preliminaries} \label{sec:preliminaries}
\begin{table}[]
    \centering
    \begin{tabular}{c l}
        \hline
        $\Sigma$ & generating set of the semigroup \\
        $\tree$ & free semigroup generated by $\Sigma$ \\
        $\level{n}$ & alias of $\Sigma^n$ \\
        $\lattice{n}$ & $\cup_{i=0}^{n} \level{i}$ \\
        $\alphabet$ & state space \\
        $\incmat$ & incidence matrix \\
        $\tshift[][\incmat]$ & Markov hom tree-shift associated with $\incmat$ \\
        $[k]$ & $\{0,1,\cdots,k-1\}$ \\
        $\tilde{g}$ & parent of $g \in \tree$ \\
        $a_0$ & label satisfying \eqref{eq:tree_assumption} \\
        $p$ & period of state $a_0$ \\
        $\mathcal{P}(\incmat)$ & $\{\alphabet_j: j \in [p]\}$ \\
        $\alphpv_{\alphabet}$, $\alphpv$ & set of probability vectors indexed by $\alphabet$ \\
        $\alphsm_{\alphabet}$, $\alphsm$ & set of stochastic matrix indexed by $\alphabet$ \\
        $\pv$, $\tv$ & vectors in $\Gamma_{\alphabet}$ \\
        $\transmat$, $\pm$, $\tm$ & matrices in $\Upsilon_{\alphabet}$ \\
        $\dv{n}(t)$, $\dv{n}[m](t)$ & distribution vectors associated with $t \in \tshift[][\incmat]$ \\
        $\trm{n}(t)$, $\trm{n}[m](t)$ & transition matrices associated with $t \in \tshift[][\incmat]$ \\
        $\mathsf{d}_{v}$, $\mathsf{d}_{V}$, $\mathsf{d}_{v,V}$ & variational distances \\
        $\dvset{n}[m]$,$\dvset{n}$ & set of feasible distributions by $\tshift[][\incmat]$ \\
        $\trmset{n}[m]$,$\trmset{n}$ & set of feasible transitions by $\tshift[][\incmat]$ \\
        $\proddom*{n}[m]$,$\proddom*{n}$ & set of collectively feasible distributions and transitions by $\tshift[][\incmat]$ \\
        $\block{n}[m]$, $\block{n}$ & feasible patterns of $\tshift[][\incmat]$ \\
        $\block{n}[m](\tv,\tm)$, $\block{n}(\tv,\tm)$ & feasible patterns of $\tshift[][\incmat]$ with distributions $\tv$ and transitions $\tm$ \\
        $(\pv,\pm) = (\tv*,\tm*)$ & reversed sequence of $(\tv,\tm)$ \\
        $Z$, $Z_j$ & feasible set of extended reversed sequences \\
        $\mathcal{C}_j$ & vectors supported on $\alphabet_j$ \\
        $\DKL(\pm||A)$ & relative entropy of $\pm$ and $A$ \\
        $A \odot B$ & Hadamard product of $A$ and $B$ \\
        $A^{\odot r}$ & matrix or vector $A$ raised to power $r$ \\
        $(X_g)_{g \in \tree}$ & Markov chain indexed by $\tree$ \\
        \hline
    \end{tabular}
    \caption{Summary of notation}
    \label{tab:notation_summary}
\end{table}
We let $\tree$, $\Sigma$, $\lattice{n}$, and so on, be as defined and use the alias $\level{n} = \Sigma^n$ to avoid possible confusion. We will consistently denote the transition matrix as $\transmat$ and its incidence matrix as $\incmat$. To simplify our discussion, two assumptions will be made throughout unless otherwise mentioned. The first one is rather fundamental, which states as follows:
\begin{equation}
\label{eq:nontrivial_assumption}
  \tag{A0}
  \parbox{\dimexpr\linewidth-3.5em}{\centering
    \strut
    $\sum_{b} \incmat_{a,b} > 0$ for all $a \in \alphabet$.
    \strut
  }
\end{equation}
By assuming so, we avoid ``nonessential'' states in our discussion, as any $a$ failing the criterion would appear in neither $\mathcal{T}_{\incmat}$ nor the Markov chain, and can be easily excluded by restricting to a subset $\alphabet' = \alphabet \setminus \{a\}$ of the state space without actually altering the systems. The second assumption owes to our exclusive attention to states of finite period as in Theorem \ref{thm:1}, and it requires the following:
\begin{equation}
\label{eq:tree_assumption}
  \tag{A1}
  \centering
  \parbox{\dimexpr\linewidth-3.4em}{%
    \strut
    There exists $a_0 \in \alphabet$ such that every $a \in \alphabet$ admits $n \in \mathbb{N}$ satisfying $(\incmat^{n})_{a_0,a} > 0$.
    \strut
  }
\end{equation}
Though such $a_0$ might not be unique, the choice will not affect our discussion and will henceforth be fixed. Notably, this assumption becomes transparent if the incidence matrix $\incmat$ is irreducible, as it is automatically satisfied under the circumstances. It plays a role only when $\incmat$ is not irreducible, in which case states are pruned if they would not be seen given the initial state $a_0$, simplifying discussion of Theorem \ref{thm:1}. We should also mention that such requirement will only be dropped in Section \ref{sec:general_upper_bound} when deriving an upper bound of Hausdorff dimensions for general Markov hom tree-shift. By writing $[k]=\{0,1,\cdots,k-1\}$ for $k \in \mathbb{N}$, the assumption \eqref{eq:tree_assumption} naturally induces a collection $\mathcal{P}(\incmat) = \{\alphabet_j: j \in [p]\}$ of subsets of $\alphabet$ such that
\begin{equation} \label{eq:label_decomposition}
    a \in \alphabet_j \text{ if and only if } (\incmat^n)_{a_0, a} > 0 \text{ for some } n \equiv j~(\bmod\ p),
\end{equation}
where we set $\alphabet_{i+p} = \alphabet_{i}$ for all $i \in \mathbb{Z}$. Although technical, $\mathcal{P}(\incmat)$ is essential for characterizing the cyclic structure of convergence in Theorem \ref{thm:1} and \ref{thm:2}. In particular, $\mathcal{P}(\incmat)$ forms a partition of $\alphabet$ if $\incmat$ is irreducible.

Some frequently used symbols are borrowed from the work of the first and the third authors \cite{ban2023topological}. We let $\Gamma_{\alphabet}$ be the set of all probability vectors indexed by $\alphabet$, and $\Upsilon_{\alphabet}$ be the set of stochastic matrices acting on $\Gamma_{\alphabet}$; explicitly, this means that given any $\pv \in \Gamma_{\alphabet}$ and any $\pm \in \Upsilon_{\alphabet}$, $\pv$, as a row vector, can always be multiplied by $\pm$ from the right so that their product $\pv \pm$ is still a probability vector. For conciseness, $\alphabet$ is suppressed from $\Gamma_{\alphabet}$ and $\Upsilon_{\alphabet}$, unless other index sets are involved. We define the \emph{variational distances} $\mathsf{d}_{v}$ and $\mathsf{d}_{V}$ on $\alphpv$ and $\alphsm$, respectively, as
\[
\mathsf{d}_{v}(\pv,\tv) = \frac{1}{2} \sum_{a \in S} \left|\pv_a-\tv_a\right| \text{ and } \mathsf{d}_{V}(\pm,\tm)=\max_{a \in \alphabet} \mathsf{d}_{v} ((\pm_{a, b})_{b \in \alphabet},(\tm_{a, b})_{b \in \alphabet}).
\]
Associated with the metrics is a sup metric $\mathsf{d}_{v,V}((\pv,\pm),(\tv,\tm))$ on the product space $\alphpv \times \alphsm$ defined as
\[
\mathsf{d}_{v,V}((\pv,\pm),(\tv,\tm))=\max\{\mathsf{d}_{v}(\pv,\tv), \mathsf{d}_{V}(\pm,\tm)\}.
\]
For a labeled tree $t \in \alphabet^{\tree}$, the \emph{$n$-th level distribution} of $t$ is denoted as
\[
\dv{n}(t)=\left(\frac{\norm{\{g \in \level{n}: t_g = a\}}}{\norm{\level{n}}}\right)_{a \in \alphabet} \in \alphpv,
\]
In addition, the \emph{$n$-th level transition} of $t$ is written as
\[
\trm{n}(t)_{a, b}=\begin{cases}
    \left(\frac{\norm{\{g \in \level{n+1}: t_{\tilde{g}} = a, t_g = b\}}}{\norm{\{g \in \level{n+1}: t_{\tilde{g}} = a\}}} \right) & \text{if } \norm{\{g \in \level{n+1}: t_{\tilde{g}} = a\}} > 0, \\
    \frac{\incmat_{a, b}}{\sum_{c \in \alphabet} \incmat_{c, b}} & \text{otherwise}.
\end{cases}
\]
For simplicity, we write the distributions of $t$ from level $n$ to level $m$ ($n \le m$) by $\dv{n}[m](t)=(\dv{n}(t),\cdots, \dv{m}(t))$, associated with which we put 
\[
\dvset{n}[m] =\left\{\dv{n}[m](t): t \in \tshift[][\incmat]\right\}
\]
to be the set of set of all admissible distributions. Likewise, the transitions of $t$ from level $n$ to level $m$ are denoted by $\trm{n}[m-1](t)=(\trm{n}(t),\cdots, \trm{m-1}(t))$ and 
\[
\trmset{n}[m]=\left\{\trm{n}[m-1](t): t \in \tshift[][\incmat]\right\}
\]
is the set of all admissible transitions. Also, let 
\[
\proddom*{n}[m]=\{(\dv{n}[m](t),\trm{n}[m-1](t)): t \in \tshift[][\incmat] \}
\]
be the collective admissible sets of distributions and transitions. In a similar fashion, we may extend the definition of a subtree $\lattice{n}$ to a subgraph consisting of vertices from level $n$ to level $m$:
\[
\lattice{n}[m] = \cup_{i=n}^m \level{i},
\]
so the set of admissible blocks by prescribed distributions $\tv=(\tv[0],\cdots,\tv[m-n])$ and transitions $\tm=(\tm[0],\cdots,\tm[m-n-1])$ is defined as
\begin{equation*} \label{eq:block_pattern}
\block{n}[m](\tv,\tm)=\{t|_{\lattice{n}[m]} \in \alphabet^{\lattice{n}[m]}: t \in \tshift[][\incmat], \dv{m}[n](t) = \tv, \trm{n}[m-1](t) = \tm\}.
\end{equation*}
In particular, the set of all admissible blocks $\block{n}[m]$ is written as the union of $\block{n}[m](\tv,\tm)$:
\[
\block{n}[m] = \cup_{(\tv,\tm) \in \proddom*{n}[m]} \block{n}[m](\tv,\tm).
\]
For simplicity, ``$n:$'' is suppressed in the notation of $\dvset{n}[m]$, $\trmset{n}[m]$, $\proddom*{n}[m]$, and $\block{n}[m]$ if $n=0$. Notably, for $(\tv,\tm) \in \proddom*{n}[m]$, it is necessary that (a) $\tv[i+1] = \tv[i] \tm[i]$ , and (b) $\tm[i]_{a,b}=0$ if $\incmat_{a,b}=0$ for every $a, b \in \alphabet$ and all $0 \le i < m-n$. For brevity, We write $\tm[i] \prec \incmat$ whenever (b) is satisfied and write $\tm[i] \sim \incmat$ when both $\tm[i] \prec \incmat$ and $\incmat \prec \tm[i]$. For convenience's sake, we denote by $(\pv,\pm) = (\tv*,\tm*)$ the reversed sequence of $(\tv,\tm)$, (i.e., $(\pv[0], \cdots, \pv[n-m])=(\tv[n-m], \cdots, \tv[0])$ and $(\pm[0], \cdots, \pm[n-m-1])=(\tm[n-m-1], \cdots, \tm[0])$) and consider the following set of all extended reversed sequences:
\begin{equation} \label{eq:support}
    Z :=\{(\mathsf{p},\mathsf{P}) \in \Gamma^{\mathbb{Z}_+} \times \Upsilon^{\mathbb{Z}_+}: \mathsf{p}^{(i)}=\mathsf{p}^{(i+1)} \pm[i] \text{ and } \pm[i] \prec \incmat \text{ for all } i \in \mathbb{Z}_+\},
\end{equation}
together with which we assign 
\begin{equation} \label{eq:support_C}
    \mathcal{C}_j = \{x \in \mathbb{R}_{\ge 0}^\alphabet: x_a = 0 \text{ if } a \notin \alphabet_j\}
\end{equation}
and
\begin{equation} \label{eq:support_j}
    Z_j = \{(\pv,\pm) \in Z: \pv[i] \in \mathcal{C}_{j-i} \text{ for all } i \in \mathbb{Z}_+\}.
\end{equation}
Additionally, for any sequence of probability vectors (and similarly for stochastic matrices), say $\pv \in \alphpv^{\mathbb{Z}_+}$, we write 
\[
\pv[i:j] = (\pv[i],\pv[i+1],\cdots,\pv[j]), \quad 0 \le i \le j \le \infty. 
\]
Finally, the convention of matrices and vectors is manifested as follows. Matrices are typed in uppercase while vectors are in lowercase, and stochastic matrices and probability vectors are in sans serif font, such as $\pm$, $\tm$, $\pv$, and $\tv$. In particular, $\{\stdvec{a}: a \in \alphabet\}$ is the standard basis of $\mathbb{R}^{\alphabet}$. Except for probability vectors in $\alphpv$, every vector in the article is by default a column vector. In addition, for any $\pm \in \alphsm$ and any nonnegative matrix $A \in \mathbb{R}_{\ge 0}^{\alphabet \times \alphabet}$ satisfying $\pm \prec A$, we define their ``Kullback-Leibler divergence'' as
\[
\DKL(\pm || A)_b:=\sum_{b \in \alphabet} \pm_{a, b} \log\left(\frac{\pm_{a, b}}{A_{a, b}}\right),
\]
where $0 \log \frac{0}{A_{a,b}}$ is interpreted as $0$ for $A_{a,b} \ge 0$. Also, when functions, such as division $/$, exponential function $e$, and logarithm $\log$, are acting on matrices or vectors, the actions are by default entrywise unless stated otherwise.
The notations are summarized in Table \ref{tab:notation_summary}.

\section{Large deviations of Markov chains on $d$-trees} \label{sec:large_deviation}
The aim of this section is to carry out the proof of Theorem \ref{thm:1}, which states the large deviation principle for the conditional sample mean. 

To illustrate our proof strategy, we present a heuristic proof of the following version of Cram\'er's theorem (see, e.g., \cite[Theorem 2.1.24]{Dembo2009-qs} for a similar version).
\begin{theorem}[Cram\'er's theorem] \label{thm:Cramer}
    Let $(X_i)_{i \in \mathbb{N}}$ be a sequence of i.i.d.~random variables taking values on a finite subset of $\mathbb{R}$, say $\{a_1, a_2, \cdots, a_k\}$. Then, 
    \[
    \frac{1}{n} \log \mathbb{P}\left(\frac{1}{n} \sum_{i=1}^{n} X_i \ge \alpha\right) \to \sup_{\mu \in \mathbb {R}} \left[\mu \alpha -\Lambda (\mu)\right] \text{ for } \alpha \ge \mathbb{E} [X_1],
    \]
    where
    \[
    \Lambda(\mu)=\log \mathbb{E} [e^{\mu X_{1}}] = \log \sum_{j=1}^{k} p_j \cdot e^{\mu a_j} \text{ with } (p_j)_{j=1}^k = (\mathbb{P}(X_1 = a_j))_{j=1}^k.
    \]
\end{theorem}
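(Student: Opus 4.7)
The plan is to derive Cram\'er's theorem via the classical method of types, as a warmup for the tree-indexed Sanov-type result to follow. Classify sequences $(j_1,\ldots,j_n) \in [k]^n$ by their empirical distribution (type) $\nu=(\nu_1,\ldots,\nu_k) \in \Gamma_{[k]}$. By standard multinomial counting and Stirling's formula, the number of sequences of type $\nu$ equals the multinomial coefficient $\binom{n}{n\nu_1,\ldots,n\nu_k} = \exp(n H(\nu) + o(n))$, where $H(\nu) = -\sum_j \nu_j \log \nu_j$, and each such sequence has probability $\prod_{j=1}^{k} p_j^{n\nu_j}$. Consequently, the $\mathbb{P}$-mass of the set of sequences of type $\nu$ is $\exp(-n D(\nu \| p) + o(n))$, where $D(\nu \| p) = \sum_j \nu_j \log(\nu_j/p_j)$ is the Kullback--Leibler divergence. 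The event $\bigl\{\tfrac{1}{n}\sum_{i=1}^n X_i \ge \alpha\bigr\}$ is precisely the disjoint union of those types satisfying the linear constraint $\sum_j \nu_j \log a_j \ge \alpha$.

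Since the total number of distinct types in $[k]^n$ is bounded by $(n+1)^k$, a polynomial factor that vanishes on the exponential scale, one would conclude
\[
\lim_{n \to \infty} \frac{1}{n}\log \mathbb{P}\left(\frac{1}{n}\sum_{i=1}^{n} X_i \ge \alpha\right) = -\inf\left\{D(\nu \| p) : \nu \in \Gamma_{[k]},\ \sum_{j=1}^k \nu_j \log a_j \ge \alpha\right\}.
\]
The upper bound follows by summing the type-wise estimate over all feasible types, absorbing the polynomial count into $o(n)$; the lower bound follows by retaining a single near-optimal type, whose empirical distribution can be chosen within $O(1/n)$ of the minimizer subject to the grid constraint $\nu \in \tfrac{1}{n}\mathbb{Z}^k$.

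The remaining step is to identify the infimum with the Legendre transform $\sup_{t \in \mathbb{R}}(t\alpha - \Lambda(t))$. This is a finite-dimensional convex program: minimizing the strictly convex functional $D(\cdot \| p)$ over the probability simplex under the linear constraints $\sum_j \nu_j = 1$ and $\sum_j \nu_j \log a_j \ge \alpha$. The Karush--Kuhn--Tucker conditions immediately produce the exponentially tilted distribution $\nu^*_j = p_j a_j^{t^*}/\sum_\ell p_\ell a_\ell^{t^*}$, where the multiplier $t^* \ge 0$ is determined so that the active constraint holds with equality. A direct substitution then yields $D(\nu^* \| p) = t^* \alpha - \Lambda(t^*)$, which coincides with the supremum of $t \mapsto t\alpha - \Lambda(t)$ by strict convexity of $\Lambda$. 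The standing hypothesis $\alpha \ge \mathbb{E} X_1$ is precisely what guarantees $t^* \ge 0$ and thus that the inequality constraint is binding at the optimum.

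The main obstacle here is largely technical: correctly handling the boundary of the simplex and the degenerate case $\alpha = \max_j \log a_j$ (where some $\nu_j^* = 0$, equivalently $t^* = +\infty$) calls for a short approximation argument. From a broader viewpoint, the real difficulty lies in transplanting this scheme to the tree-indexed setting of Theorem \ref{thm:1}, where the single-vector types $\nu$ are replaced by infinite sequences $(\pv,\pm) \in Z_j$ of distributions and transitions, the relative entropy $D(\cdot \| p)$ is replaced by a weighted sum involving $\Phi(\pm[i] | (A\wedge\mu)\odot M)$, and the scalar Legendre duality used here must be promoted to the variational representation of the nonlinear pressure $P^{(\infty)}_j$ that appears in $\Lambda^*_j$.
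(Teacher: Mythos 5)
Your proposal is correct and follows essentially the same route the paper sketches for this theorem: classify samples by empirical type, estimate type probabilities via Stirling, absorb the polynomial type count, and identify the infimum with the Legendre transform via convex duality. The paper gives only a heuristic two-line derivation and leaves the final duality step to ``standard techniques in the optimization theory''; you carry it out explicitly via KKT conditions and exponential tilting, and you correctly note that $\alpha \ge \mathbb{E} X_1$ is exactly what forces the multiplier $t^*$ to be nonnegative and the constraint binding. One small caveat: both your derivation and the paper's own sketch yield $-\inf\{D(\nu\,\|\,p): \sum_j \nu_j \log a_j \ge \alpha\} = -\sup_{t}(t\alpha - \Lambda(t))$, whereas the theorem as printed states the limit to be $+\sup_t(t\alpha - \Lambda(t))$, a nonnegative quantity; the paper's statement is evidently missing a minus sign, and your version has the correct one.
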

\noindent By virtue of Stirling's approximation, one can reformulate the probability in terms of an optimization problem:
\begin{align*}
    & \frac{1}{n} \log \mathbb{P}\left(\frac{1}{n} \sum_{i=1}^{n} X_i \ge \alpha\right) \\
    = & \sup_{\beta \ge \alpha} \left\{\frac{1}{n} \log \mathbb{P}\left(\sum_{j=1}^{k} t_j a_j  = \beta\right): t_j = \frac{1}{n} \sum_{i=1}^{n} \chi_{a_j}(X_i) \right\} + O(n^{-1} \log n) \\
    = & \sup_{\beta \ge \alpha} \left\{ \sum_{j=1}^{k} t_j \log \frac{p_j}{t_j}: \sum_{j=1}^{k} t_j a_j = \beta \right\} + o(1),
\end{align*}
where $\chi_a$ denotes the characteristic function of the state $a$ and the first equality holds since the set
\[
L_n = \left\{(t_j)_{j=1}^{k}: t_j = \frac{1}{n} \sum_{i=1}^{n} \chi_{a_j}(X_i)\right\}
\]
has its cardinality bounded from above by $\binom{n}{k-1}$, and the second from the fact that $L_n$ is asymptotically dense in the simplex of probability vectors. The theorem is ultimately proved by deriving the rate function $\sup_{\mu \in \mathbb{R}} \left[\mu \alpha - \Lambda(\mu)\right]$, as a dual problem, utilizing standard techniques from optimization theory. Inspired by this argument, the authors analogously reproduced the necessary approximations and estimations in \cite{ban2023topological}, based upon which the method of types (see, e.g., \cite[Chapter 2.1.1]{Dembo2009-qs}) can be adapted for Theorem \ref{thm:1}.

To prove Theorem \ref{thm:1}, our starting point would be an observation asserting that the topological space $\alphpv^{\mathbb{Z}_+} \times \alphsm^{\mathbb{Z}_+}$ is metrizable, and that a natural choice of the metric would be induced by the variational distance on $\alphpv^{\mathbb{Z}_+} \times \alphsm^{\mathbb{Z}_+}$, which is defined as
\begin{equation} \label{eq:product_metric}
    \mathsf{d}^{\infty}_{v,V}((\mathsf{p},\mathsf{P}),(\mathsf{q},\mathsf{Q})):=\sum_{i=0}^{\infty} \frac{d-1}{d^{i+1}} \cdot \mathsf{d}_{v,V}((\mathsf{p}^{(i)},\pm[i]),(\mathsf{q}^{(i)},\mathsf{Q}^{(i)})) \qquad (d \in \mathbb{N} \setminus \{1\}).
\end{equation}
We note that the product topology is compatible with this metric, rendering $\alphpv^{\mathbb{Z}_+} \times \alphsm^{\mathbb{Z}_+}$ a compact metric space. For the scope of this article, the following two families of functions on $Z$ are of interest. For $0 \le n \le m \le \infty$ and $A \in \mathbb{R}_{\ge 0}^{\alphabet \times \alphabet}$ satisfying $\incmat \prec A$, define $\mathcal{F}_{n:m}, \mathcal{G}_{n:m}: Z \to \mathbb{R}$ (with parameter $A$) by
\begin{gather*}
    \mathcal{F}_{n:m}(\mathsf{p},\mathsf{P};A) = -\sum_{i=n}^{m} \frac{d-1}{d^{i-n+1}} \mathsf{p}^{(i+1)} \DKL(\pm[i] || A), \\
    \mathcal{G}_{n:m}(\mathsf{p},\mathsf{P};A) = \sum_{i=n}^{m} \frac{d-1}{d^{i-n+1}} \mathsf{p}^{(i+1)} (\pm[i] \odot \log \obsfunc) \onevec,
\end{gather*}
where, as previously, $A \odot B$ denotes the Hadamard product of matrices $A$ and $B$, the logarithm is applied entrywise, and $0 \log 0 = 0$. For conciseness, we suppress ``$n:$'' when $n = 0$ and ``$m$'' when $m = \infty$. The functions $\mathcal{F}$ and $\mathcal{G}$ turn out to be continuous, as shown in the following lemma.
\begin{lemma} \label{lem:1}
    Suppose $A \in \mathbb{R}_{\ge 0}^{\alphabet \times \alphabet}$ satisfying $\incmat \prec A$. The functions $\mathcal{F}(\cdot;A)$, $\mathcal{G}(\cdot;A):Z \to \mathbb{R}$ are uniformly convergent and thus continuous.
\end{lemma}
\begin{proof}
    Indeed, it is because the functions $(\pv,\pm) \mapsto \mathsf{p}^{(i+1)} \DKL(\pm || A)$ and $(\pv,\pm) \mapsto \mathsf{p}^{(i+1)} (\pm[i] \odot \log A) \onevec$ are continuous and uniformly bounded over $Z$.
\end{proof}
\noindent The continuity and compactness are crucial to the rest of our discussion. In particular, they guarantee that for $\alpha \in \mathbb{R}$ and all $n \in \mathbb{N} \cup \{\infty\}$, the domains
\begin{align*}
    Z^{(n)}(\alpha;\obsfunc) = \{(\pv,\pm) \in Z: \mathcal{G}_n(\pv,\pm; \obsfunc) = \alpha\} \text{ and } Z_j^{(n)}(\alpha;\obsfunc) = Z^{(n)}(\alpha;\obsfunc) \cap Z_j
\end{align*}
are compact, of which we omit the superscript ``$(n)$'' if $n = \infty$.

Our next step is to discuss combinatorial approximations of the empirical distribution of patterns, which are manifested in the following series of lemmas. 
\begin{lemma}[Proposition 2.2, \cite{ban2023topological}] \label{lem:dist_set_growth_rate}
    For any $n \le m$, 
    \begin{gather*} 
        1 \le \norm{\dvset{n}[m]} \le \prod_{i=n}^{m-1} \left(\norm{\level{i}}+1\right)^{\norm{\alphabet}} \le \left(\frac{\norm{\lattice{n}[m]}}{m-n+1} + 1\right)^{(m-n+1) \cdot \norm{\alphabet}}, \\
        1 \le \norm{\trmset{n}[m]} \le \prod_{i=n}^{m-1} \left(\norm{\level{i}}+1\right)^{\norm{\alphabet} (\norm{\alphabet}+1)} \le \left(\frac{\norm{\lattice{n}[m]}}{m-n+1} + 1\right)^{2 (m-n+1) \cdot \norm{\alphabet}^2},
    \end{gather*}
    where $\lattice{n}[m] = \cup_{i=n}^m \level{i}$ has cardinality $\sum_{i=n}^m = d^i$.
\end{lemma}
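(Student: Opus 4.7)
The plan is a direct counting argument combined with the AM-GM inequality; no analytic machinery is required.

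To bound $\norm{\dvset{n}[m]}$, I first note that each distribution $\dv{n}(t)$ at level $i$ is determined by the $\norm{\alphabet}$ integer counts $|\{g \in \level{i} : t_g = a\}| \in \{0, 1, \ldots, \norm{\level{i}}\}$, so that there are at most $(\norm{\level{i}} + 1)^{\norm{\alphabet}}$ candidates at level $i$. Ignoring the simplex constraint (which would only sharpen the count), taking the product over $i = n, \ldots, m$ gives the middle inequality. Nonemptiness of $\mathcal{T}_{\mathbf{A}}$, guaranteed by \eqref{eq:nontrivial_assumption}, supplies the trivial lower bound $\norm{\dvset{n}[m]} \ge 1$. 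For the rightmost inequality I apply AM-GM to the $m - n + 1$ positive numbers $\norm{\level{i}} + 1$:
\[
\prod_{i=n}^{m}(\norm{\level{i}}+1) \;\le\; \left(\frac{1}{m-n+1}\sum_{i=n}^{m}(\norm{\level{i}}+1)\right)^{m-n+1} = \left(\frac{\norm{\lattice{n}[m]}}{m-n+1}+1\right)^{m-n+1},
\]
and raise the result to the power $\norm{\alphabet}$.

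The argument for $\norm{\trmset{n}[m]}$ is parallel, with extra bookkeeping per level. For each parent label $b$, column $b$ of $\trm{n}(t)$ is determined by the count $n_b \in \{0, 1, \ldots, \norm{\level{i}}\}$ of $b$-labeled nodes at level $i$ together with the $\norm{\alphabet}$ child-label counts among the $d n_b$ children of those nodes. Reparameterizing so that every recorded count takes at most $\norm{\level{i}} + 1$ values (by retaining only the denominator $n_b$ and the $\norm{\alphabet}$ counts normalized appropriately) yields at most $(\norm{\level{i}}+1)^{\norm{\alphabet}+1}$ admissible columns per $b$, hence at most $(\norm{\level{i}}+1)^{\norm{\alphabet}(\norm{\alphabet}+1)}$ transition matrices per level. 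Feeding this into the same AM-GM step, together with the crude estimate $\norm{\alphabet}(\norm{\alphabet}+1) \le 2\norm{\alphabet}^2$, produces the rightmost bound.

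The lemma is fundamentally combinatorial, so I do not expect a significant obstacle. The only subtlety I anticipate is choosing the parameterization of $\trm{n}(t)$ so that the per-level option count scales with $\norm{\level{i}} + 1$ rather than with $\norm{\level{i+1}} + 1 = d\norm{\level{i}} + 1$; if this is not done carefully, the AM-GM step would yield a bound in terms of $\norm{\lattice{n+1}[m+1]}$ instead of the desired $\norm{\lattice{n}[m]}$. Once the right representation is fixed, the remaining work is routine.
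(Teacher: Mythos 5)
Your strategy---count integer parameters per level and apply AM--GM---is the natural one and is surely the same as the cited Proposition~6 of \cite{ban2023topological}; the $\dvset{n}[m]$ half of the argument is complete and correct.

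For the $\trmset{n}[m]$ half, however, the ``reparameterization'' step is not concretely realized, and taken literally it cannot be: the child-label counts that feed column $b$ of $\trm{i}(t)$ genuinely range over $\{0,\dots,d\,n_b\}\subseteq\{0,\dots,\norm{\level{i+1}}\}$, and no per-column, per-level re-encoding compresses them into $\{0,\dots,\norm{\level{i}}\}$. The fix is simpler than a reparameterization: attribute each count to the level on which it actually sits. The matrix $\trm{i}(t)$ is determined by the $\norm{\alphabet}$ parent counts $n^{(i)}_b=|\{g\in\level{i}:t_g=b\}|\le\norm{\level{i}}$ and the $\norm{\alphabet}^2$ child-pair counts $c^{(i+1)}_{a,b}=|\{h\in\level{i+1}:t_{\varsigma(h)}=b,\ t_h=a\}|\le\norm{\level{i+1}}$. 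Since $\trm{n}[m]=(\trm{n},\dots,\trm{m-1})$, the parent counts occupy levels $n,\dots,m-1$ and the child-pair counts occupy levels $n+1,\dots,m$; hence each level $i\in\{n,\dots,m\}$ hosts at most $\norm{\alphabet}+\norm{\alphabet}^2=\norm{\alphabet}(\norm{\alphabet}+1)$ integer parameters, each taking at most $\norm{\level{i}}+1$ values, which is exactly the claimed per-level factor. In particular, your worry that a careless bound would spill over to $\norm{\lattice{n+1}[m+1]}$ is unfounded: the tuple stops at $\trm{m-1}$, so only levels $n,\dots,m$ ever enter. With this bookkeeping in place, your AM--GM step and the crude estimate $\norm{\alphabet}(\norm{\alphabet}+1)\le 2\norm{\alphabet}^2$ close out the proof as you describe.
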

\noindent The lemma indicates that the growth of admissible distributions and transitions is subexponential with respect to $\lattice{n}[m]$, which is markedly smaller than the exponential growth rate of the cardinality of the set $\block{n}[m]$. The next two lemmas, on the other hand, demonstrate that the set $\proddom*{n}[m]$ is, in some sense, asymptotically dense in $Z$, and Stirling's approximations can be applied as in our heuristic proof of Cram\'er's theorem. Indeed, if we define
\begin{multline*}
    \proddom{k}'=\left\{(\pv,\pm) \in \left(\alphpv^{k} \times \{\stdvec{a}\}_{a \in \alphabet}\right) \times \alphsm^{k}: \pm[i] \prec \incmat, \pv[i+1] = \pv[i] \pm[i],   \vphantom{\tfrac{\incmat_{a,b}}{{\textstyle\sum}_{c \in \alphabet} \incmat_{c,b}}}\right. \\
    \left.\pm[i]_{a,b} = \tfrac{\incmat_{a,b}}{{\textstyle\sum}_{c \in \alphabet} \incmat_{c,b}} \text{ if } \pv[i+1]_b=0 \text{ for all } i \in [k]\right\},
\end{multline*}
we have the following lemmas.
\begin{lemma}[{\cite[Proposition 3.2]{ban2023topological}}] \label{lem:dense_pattern}
    Let $\mathtt{d}^k_{v,V}$ be any product metric on the space $\alphpv^{k+1} \times \alphsm^k$, say for example
    \[
    \mathtt{d}^k_{v,V}((\pv,\pm),(\tv,\tm)):=\max \left\{\max_{0 \le i \le k}\mathtt{d}_{v}(\pv[i],\tv[i]), \max_{0 \le i < k} \mathtt{d}_{V}(\pm[i],\tm[i]) \right\}.
    \]
    Then, $\lim_{n \to \infty} \sup_{(\pv,\pm) \in \proddom{k}'} \mathtt{d}^k_{v,V}(\proddom*{n}[n+k], (\pv*,\pm*)) = 0$.
\end{lemma}
\noindent Notably, given any $(\pv,\pm) \in Z$, the values of $\mathcal{F}_n(\pv,\pm;A)$ and $\mathcal{G}_n(\pv,\pm;A)$ are completely independent of $\pm[i]_{a,b}$ whenever $\pv[i+1]_a = 0$, which renders the corresponding condition in the definition of $\proddom{k}'$ transparent.
\begin{lemma} \label{lem:uniform_equiconvergence}
    For any $(\tv, \tm) \in \proddom*{n}$ and any $(\pv,\pm) \in Z$ satisfying $(\pv^{(0:n)},\pm^{(0:n-1)}) = (\tv*,\tm*)$,
    \begin{align*}
        &\frac{1}{\norm{\lattice{n}}} \sum_{u \in \block{n}(\tv,\tm)} \sum_{g \in \lattice{n} \setminus \{\epsilon\}} \log \obsfunc_{u_{\tilde{g}},u_{g}} = (1-d^{-n-1}) \cdot \mathcal{F}_{n}(\pv,\pm;\obsfunc) + O\left(\frac{\log \norm{\lattice{n}}}{\norm{\lattice{n}}}\right),
    \end{align*}
    where the implicit constant in the big O notation is independent of $(\tv, \tm)$ and $(\pv,\pm)$.
\end{lemma}
\begin{proof}
    The equality follows from \cite[Proposition 3.1]{ban2023topological} together with the observation that $\frac{d-1}{d^{n-i+1}} / \frac{\norm{\level{i}}}{\norm{\lattice{n}}} = 1-d^{-n-1}$.
\end{proof}

Similar to the classical arguments using the method of types, the theory of convex optimization plays a key role. In the present work, we resort to Sion's minimax theorem (see, e.g., \cite{Komiya1988} for a proof). To apply it, we need to verify \textit{a priori} the concavity/convexity of the objective function, and this is the moment when the following lemma is apropos. Throughout the discussion, the convexity of a vector-valued function is defined in an entrywise sense, as stated below.
\begin{definition} \label{def:convexity}
    Let $V$ be a convex subset of a vector space. An extended vector-valued function $(f_a)_{a \in \alphabet}: V \to (\mathbb{R} \cup \{\pmold\infty\})^{\alphabet}$ is said to be \emph{convex} if each $f_a$ is convex over the domain $V$.
\end{definition}
\begin{lemma} \label{lem:convexity_backbone}
    Let $V$ be a convex set in a vector space. Suppose that
    \begin{itemize}
        \item $E: V \to \mathbb{R}_{\ge 0}^{\alphabet \times \alphabet}$ satisfies $E(x) \sim \incmat$ for all $x \in V$ and $\log E$ (entrywise logarithm) is convex,
        \item $\lambda: V \to \mathbb{R}^{\alphabet}$ is convex, and
        \item $q: V \to \mathbb{R}_{> 0}$ is affine.
    \end{itemize}
    If $E$ or $q$ is constant, then the following map is convex:
    \[
    x \mapsto q(x) \log (E(x) e^{q(x)^{-1} \lambda(x)})
    \]
    where exponentiation and logarithm are both applied entrywise.
\end{lemma}
\begin{proof}
    If $E(x) = E$ is constant, then for nonnegative $\alpha, \alpha'$ satisfying $\alpha + \alpha'=1$, we have
    \begin{equation*}
        \begin{aligned}
            & (q(\alpha x + \alpha' x')) \log \left(E e^{\frac{\lambda(\alpha x + \alpha' x')}{q (\alpha x + \alpha' x')}}\right) = \log \left(\left(E e^{\frac{\lambda(\alpha x + \alpha' x')}{\alpha q(x) + \alpha' q(x')}}\right)^{\alpha q(x) + \alpha' q(x')}\right) \\
            \le & \log \left(\left(E e^{\frac{\alpha q(x)}{\alpha q(x) + \alpha' q(x')} \frac{\lambda(x)}{q(x)} + \frac{\alpha' q(x')}{\alpha q(x) + \alpha' q(x')} \frac{\lambda(x')}{q(x')}}\right)^{\alpha q(x) + \alpha' q(x')}\right) \\
            \le & \log \left(\left(E e^{\frac{\lambda(x)}{q(x)}}\right)^{\alpha q(x)}\right) + \log \left(\left(E e^{\frac{\lambda(x')}{q(x')}}\right)^{\alpha' q(x')}\right) \\
            = & \alpha q(x) \log (E e^{q(x)^{-1} \lambda(x)}) + \alpha' q(x') \log (E e^{q(x')^{-1} \lambda(x')}),
        \end{aligned}
    \end{equation*}
    where the first inequality results from convexity and the second from H\"older's inequality. Similarly, if, without loss of generality, $q(x) = 1$ is constant, then
     \begin{equation*}
        \begin{aligned}
            & \log \left(E(\alpha x + \alpha' x') e^{\lambda(\alpha x + \alpha' x')}\right) = \log \left(e^{\log E(\alpha x + \alpha' x')} e^{\lambda(\alpha x + \alpha' x')}\right)  \\
            \le & \log \left(e^{\alpha \log E(x) + \alpha' \log E(x')} e^{\alpha \lambda(x) + \alpha' \lambda(x')}\right) \\
            \le & \alpha \log \left(E(x) e^{\lambda(x)}\right) + \alpha' \log \left(E(x') e^{\lambda(x')}\right),
        \end{aligned}
    \end{equation*}
    by H\"older's inequality.
\end{proof}

\begin{lemma} \label{lem:probability_surjectivity}
    Suppose that assumption \eqref{eq:tree_assumption} holds. Then, for all sufficiently large $n_0 \in\mathbb{N}$, $\min_{a \in \alphabet_0}(\incmat^{p n_0})_{a_0,a} > 0$. In addition, for every $\tv \in \mathcal{C}_0 \cap \alphpv$ there exists $(\pv,\pm) \in Z_0$ such that $\pv[pn_0] = \stdvec{a_0}$ and that $\pv[0] = \tv$.
\end{lemma}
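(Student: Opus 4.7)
The plan is to address the two claims separately: the first reduces to an elementary fact about sub-semigroups of $\mathbb{N}$, while the second is an explicit routing construction along paths in $\mathbf{A}$.

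For the first claim, set $M_a = \{n \in \mathbb{N}: (\mathbf{A}^n)_{a,a_0} > 0\}$. Since $(\mathbf{A}^{m+n})_{a_0,a_0} \ge (\mathbf{A}^m)_{a_0,a_0}\,(\mathbf{A}^n)_{a_0,a_0}$, the set $M_{a_0}$ is closed under addition, and by the definition of the period satisfies $\gcd M_{a_0} = p$; a standard number-theoretic fact then produces $N_0$ with $pk \in M_{a_0}$ for every $k \ge N_0$. For each $a \in \alphabet_0$, assumption \eqref{eq:tree_assumption} together with \eqref{eq:label_decomposition} supplies some $n_a \in M_a$ with $n_a \equiv 0 \pmod p$, and the inequality $(\mathbf{A}^{n_a + m})_{a,a_0} \ge (\mathbf{A}^{n_a})_{a,a_0}(\mathbf{A}^m)_{a_0,a_0}$ upgrades this to $pk \in M_a$ for all sufficiently large $k$. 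Taking the maximum over the finite set $\alphabet_0$ produces a single $n$ that works uniformly.

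For the second claim, fix such an $n$. For each $a \in \alphabet_0$ pick a path $\pi_a = (c^{(a)}_0, c^{(a)}_1, \ldots, c^{(a)}_{pn})$ in the directed graph of $\mathbf{A}$ with $c^{(a)}_0 = a_0$ and $c^{(a)}_{pn} = a$, whose existence is guaranteed by the first claim. For $0 \le i \le pn$ define
\[
\pv[i]_c \,=\, \sum_{a \in \alphabet_0:\, c^{(a)}_{pn - i} = c} \tv_a,
\]
and for $0 \le i < pn$ define
\[
\pm[i]_{c', c} \,=\, \frac{\sum_{a:\, c^{(a)}_{pn - i - 1} = c,\; c^{(a)}_{pn - i} = c'} \tv_a}{\sum_{a:\, c^{(a)}_{pn - i - 1} = c} \tv_a}
\]
whenever the denominator is positive; otherwise let $\pm[i]_{\cdot, c}$ be any stochastic column supported on $\{c': \mathbf{A}_{c', c} > 0\}$, which is nonempty by \eqref{eq:nontrivial_assumption}. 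The boundary values $\pv[0] = \tv$ and $\pv[pn] = \stdvec{a_0}$ are immediate from the definition, the support condition $\pm[i]_{c', c} = 0$ whenever $\mathbf{A}_{c', c} = 0$ is built in by construction, and the recursion $\pv[i] = \pm[i]\,\pv[i+1]$ reduces to a one-line telescoping of the numerators.

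To obtain the infinite data required by $Z_0$, I extend $(\pv, \pm)$ beyond $i = pn$ by cycling around any fixed loop $a_0 \to d_1 \to \cdots \to d_{L-1} \to a_0$ of length $L \in M_{a_0}$ (necessarily divisible by $p$): set $\pv[pn + k] = \stdvec{d_{(-k) \bmod L}}$ with the convention $d_0 = a_0$, and let $\pm[pn + k]$ route this loop deterministically, again filling other columns through \eqref{eq:nontrivial_assumption}. The required membership $\pv[i] \in \mathcal{C}_{-i}$ follows by noting that each $c^{(a)}_k$ (respectively each $d_k$) is reachable from $a_0$ in $k$ steps and therefore lies in $\alphabet_k$. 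The main (mild) obstacle throughout is lining up path positions with the period-$p$ classes of $\mathcal{P}(\mathbf{A})$, which the construction has been arranged to handle automatically; after that, the verifications are routine.
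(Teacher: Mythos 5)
Your proof is correct and takes essentially the same approach as the paper: choose explicit length-$pn$ paths in the digraph of $\mathbf{A}$ from $a_0$ to each $a$ in the support of $\tv$, push $\tv$ forward along them, and read off the transition matrices as the induced conditional probabilities. The one notable difference is that the paper additionally merges paths (imposing that once two of its sequences $b^{a,i}$ agree at some index they agree thereafter) so that the parent-to-child map is single-valued, whereas your path-averaged formula for $\pm[i]_{c',c}$ is well-defined without any merging hypothesis and the telescoping check of $\pv[i]=\pm[i]\pv[i+1]$ goes through directly; this is a mild simplification. Your first part also spells out the elementary number-theoretic argument that the paper dismisses as straightforward, and your loop extension past index $pn$ matches what the paper does implicitly by continuing all sequences beyond $a_0$.
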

\begin{proof}
    The existence of $n_0$ satisfying $\min_{a \in \alphabet_0} (\incmat^{p n_0})_{a_0,a} > 0$ follows from \eqref{eq:tree_assumption} and that $a_0$ has finite period $p$. To verify the additional properties are also satisfied, we first construct, for every $a \in \alphabet_0$, a sequence $(b^{a,i})_{i \in \mathbb{Z}_+}$ satisfying that $b^{a,0}=a$, that $b^{a,pn_0} = a_0$, and that $\incmat_{b^{a,i+1},b^{a,i}} = 1$ for all $i \in \mathbb{Z}_+$. Additionally, we assume that for every $i=0,1,\cdots,pn$, the condition $b^{a,i} = b^{a',i}$ implies $(b^{a,j})_{j \ge i} = (b^{a',j})_{j \ge i}$. To achieve this, if $b^{a,1} = b^{a',1}$ but $(b^{a,j})_{j \ge 1} \ne (b^{a',j})_{j \ge 1}$, then $(b^{a',j})_{j \ge 1}$ can be replaced by $(b^{a,j})_{j \ge 1}$, and the replacement continues until the property is satisfied for all $b^{a,1}$, $a \in \alphabet_0$. Iterating process for $i=1,\cdots,pn_0$, the property is further satisfied by $b^{a,i}$ for $i$ and all $a$. The lemma is then concluded by choosing 
    \[
    \pv[0] = \tv \text{ and } \pv[i+1]_{b} = \sum_{b^{a,i}:b^{a,i+1} = b} \pv[i]_{b^{a,i}} \quad (i \in \mathbb{Z}_+),
    \]
    and
    \[
    \pm[i]_{b,b'} = \begin{cases}
        \frac{\pv[i]_{b'}}{\pv[i+1]_{b}} & \text{if } \pv[i+1]_{b} > 0,\vspace{.5em} \\
        \frac{\incmat_{b,b'}}{\sum_{c \in \alphabet} \incmat_{b,c}} & \text{otherwise}.
    \end{cases} \quad \quad (i \in \mathbb{Z}_+)
    \]
\end{proof}

\begin{lemma} \label{lem:asymptotic_property}
    Suppose that assumption \eqref{eq:tree_assumption} holds. Let $\obsfunc \in \mathbb{R}_{\ge 0}^{\alphabet \times \alphabet}$ satisfy $\incmat \prec \obsfunc$ and define $E(\mu) = \transmat \odot \obsfunc^{\odot \mu}$ for every $\mu \in \mathbb{R}$. Then, the following hold.
    \begin{enumerate}
        \item Let $(\lambda^{(n)}(\mu))_{n \in \mathbb{Z}_+}$ be a sequence of $\mathbb{R}^{\alphabet}$-valued functions iteratively defined by
        \begin{align} \label{eq:lambda}
            \lambda^{(n+1)}(\mu) = & \frac{d-1}{d^{i+1}} \log \left(E(\mu) e^{\frac{d^{n+1}}{d-1} \lambda^{(n)}(\mu)}\right) \text{ with } \lambda^{(0)}(\mu) = 0.
        \end{align}
        Then, $\mu \mapsto \lambda^{(n)}(\mu)$ is convex for all $n \in \mathbb{Z}_+$.
        
        \item For all $a \in \alphabet$ and $n \in \mathbb{N}$, 
        \[
        \lambda^{(n)}(\mu)_a = \max_{(\pv,\pm) \in Z, \pv[n] = \stdvec{a}}\mathcal{F}_n(\pv,\pm;E(\mu)),
        \]
        \[
        \inf_{\mu \in \mathbb{R}} [-\mu \alpha + \lambda^{(n)}(\mu)_a] = \sup_{\substack{(\pv,\pm) \in Z^{(n)}(\alpha;\obsfunc), \pv[n] = \stdvec{a}}} \mathcal{F}_n(\pv,\pm; \transmat).
        \]
        
        \item Let $n_0 \in \mathbb{N}$ be a constant given in Lemma \ref{lem:probability_surjectivity}. Then, there exists an absolute constant $C > 0$ such that for all $n \in \mathbb{N}$,
        \begin{align*}
            \norm{\lambda^{(n+p n_0)}(\mu)_{a_0} - \max_{a \in \alphabet_0}\lambda^{(n)}(\mu)_{a}} \le C d^{-n} (\norm{\mu}+1).
        \end{align*}
        
        \item Let $\Lambda_j$, $\Lambda_j^*$ be as defined in Theorem \ref{thm:1}. Then, $\Lambda_j$ is well-defined and convex, satisfying
        \[
        \Lambda_j(\mu) = \max_{(\pv,\pm) \in Z_j} \mathcal{F}(\pv,\pm; E(\mu)).
        \]
        Also, $\Lambda_j^*$ is convex and lower semicontinuous, satisfying
        \[
        -\Lambda_j^*(\alpha) = \sup_{(\pv,\pm) \in Z_j(\alpha;\obsfunc)} \mathcal{F}(\pv,\pm; \transmat).
        \]
        In particular, $(\Lambda_j^*)^{-1}[0,\infty) = [\alpha_1,\alpha_2]$ is a nonempty compact interval.
    \end{enumerate}
\end{lemma}
\begin{proof}
    The minimax theorem forms the backbone of the proof.

    (1) The convexity is secured by Lemma \ref{lem:convexity_backbone}.

    (2) The first equality is given in \cite[Proposition 11]{ban2023topological}. To prove the second, note that
    \begin{align*}
        & \sup_{(\pv,\pm) \in Z_j} \left\{ \mathcal{F}_n(\pv,\pm;\transmat): \mathcal{G}_n(\pv,\pm;\obsfunc) = \alpha, \pv[n]=\stdvec{a}\right\} \\
        =& \sup_{\substack{\pm[0:n-1] \in \alphsm^n\\\forall i, \pm[i] \prec \incmat}} \inf_{\mu \in \mathbb{R}} \left[- \mu \alpha - \sum_{i=0}^{n-1} \frac{d-1}{d^{i+1}} \stdvec{a} \left(\prod_{\ell=1}^{n-i-1} \pm[n-\ell]\right) \DKL(\pm[i] || E)\right],
    \end{align*}
    where $\prod_{i=n}^m \pm[i] = \pm[n] \pm[n+1] \cdots \pm[m]$ is set to identity matrix if $m < n$. By fixing $\pm[1:n-1]$ and $\pv[n]$, we may apply the minimax theorem to swap ``$\sup_{\pm[0] \in \alphsm, \pm[0] \prec \incmat}$'' and ``$\inf_{\mu \in \mathbb{R}}$'' and simplify the expression:
    \begin{align*}
        & \sup_{(\pv,\pm) \in Z_j} \left\{ \mathcal{F}_n(\pv,\pm;\transmat): \mathcal{G}_n(\pv,\pm;\obsfunc) = \alpha, \pv[n]=\stdvec{a}\right\} \\
        = & \sup_{\substack{\pm[1:n-1] \in \Upsilon^{n-1}\\\forall i, \pm[i] \prec \incmat}} \inf_{\mu \in \mathbb{R}} \left[- \mu \alpha + \stdvec{a} \left(\prod_{\ell=1}^{n-1} \pm[n-\ell]\right) {\lambda^{(1)}}(\mu)\right. \\
        & \hspace{3em} \left.- \sum_{i=1}^{n-1} \frac{d-1}{d^{i+1}} \stdvec{a} \left(\prod_{\ell=1}^{n-i-1} \pm[n-\ell]\right) \DKL(\pm[i] || E)\right].
    \end{align*}
    We then proceed, by applying the minimax theorem recursively, to swap $\sup_{\pm[i] \in \alphsm, \pm[i] \prec \incmat}$ and $\inf_{\mu \in \mathbb{R}}$ as before to derive the second equality.

    (3) The proof relies on the following estimate: For any $\beta \in \mathbb{R}^{\alphabet}_{>0}$, 
    \[
    \max_{\tv \in \alphpv} \left|\sum_{a \in \alphabet} \tv_a \log \frac{\beta_a}{\tv_a}\right| \le \log\norm{\alphabet} + \max_a \norm{\log \beta_a},
    \]
    which yields an absolute constant $C > 0$ such that $\norm{\DKL(\pm[i] || E(\mu))} \le C \cdot (|\mu|+1)$ and that, in particular, $\mathcal{F}_{n:m}(\pv,\pm;\transmat) = \mathcal{F}_{n:m}(\pv,\pm;E(0)) \le C d^{-n}$. Now given any $a \in \alphabet_0$, express $\lambda^{(n)}(\mu)_a = \mathcal{F}_{n}(\pv,\pm;\transmat)$ for some $(\pv,\pm) \in Z_n$ satisfying $\pv[n] = \stdvec{a}$. We then apply Lemma \ref{lem:probability_surjectivity} to find some $(\pv',\pm') \in Z_n$ such that $({\pv'}^{(0:n)},{\pm'}^{(0:n-1)}) = (\pv[0:n],\pm[0:n-1])$ and $\pv[n+p n_0]=\stdvec{a_0}$. This implies 
    \[
    \lambda^{(n + p n_0)}(\mu)_{a_0} \ge \lambda^{(n)}(\mu)_a + d^{-n} \mathcal{F}_{n:n+p n_0}(\pv',\pm';\transmat) \ge \lambda^{(n - p n_0)}(\mu)_{a_0} - C d^{-n}.
    \]
    For the other inequality, supposing $\lambda^{(n+p n_0)}(\mu)_{a_0} = \mathcal{F}_{n+pn_0}(\pv,\pm;\transmat)$ for some $(\pv,\pm) \in Z_n$ with $\pv[n+pn_0] = \stdvec{a_0}$ yields
    \[
    \max_{a \in \alphabet_0} \lambda^{(n)}_{a} \ge \mathcal{F}_{n}(\pv,\pm;\transmat) + \mathcal{F}_{n:n+pn_0}(\pv,\pm;\transmat) - C d^{-n} = \lambda^{(n + p n_0)}_{a_0} - C d^{-n}.
    \]
    Combining the above proves the proposed bound.

    (4) The existence of the limit in $\Lambda_j$ as well as the proposed equality is justified by (3), while its convexity follows from (1).

    For $\Lambda_j^*$, we note that its convexity and lower semicontinuity are consequences of the Legendre transform of $\Lambda_j$ (see, e.g., \cite[Lemma 2.2.5 (a)]{Dembo2009-qs}). We proceed to prove the first identity. Note that by definition,
    \begin{equation} \label{eq:strong_duality_LDP}
        \sup_{(\pv,\pm) \in Z_j} \left\{ \mathcal{F}(\pv,\pm;\transmat): \mathcal{G}(\pv,\pm;\obsfunc) = \alpha\right\} = \sup_{(\pv,\pm) \in Z_j} \inf_{\mu \in \mathbb{R}} \left[-\mu \alpha + \Lambda_j(\mu)\right] \le - \Lambda_j^*(\mu),
    \end{equation}
    which is also known as the \emph{weak duality} in the context of optimization theory. Hence, it remains to prove the other inequality. For simplicity, we write 
    \[
    f_j^{(n)}(\alpha) = \sup_{(\pv,\pm) \in Z_j(\alpha;A), \pv[n]=\stdvec{a_0}} \mathcal{F}_n(\pv,\pm;\transmat) \quad \text{and} \quad f_j(\alpha) = \sup_{(\pv,\pm) \in Z_j(\alpha;A)} \mathcal{F}(\pv,\pm;\transmat),
    \]
    and take $n_0$ to be an integer as in Lemma \ref{lem:probability_surjectivity}.
    
    First, we claim that there is a compact interval $\emptyset \ne [\alpha_1',\alpha_2'] \subseteq [\alpha_1,\alpha_2]$ such that $\sup_{(\pv,\pm) \in Z_j(\alpha;\obsfunc)} \mathcal{F}(\pv,\pm; \transmat) > -\infty$ if and only if $\alpha \in [\alpha_1',\alpha_2']$. The nonemptiness and compactness of the domain of $f_j$ follows from the compactness of $Z_j$ and the continuity of $\mathcal{F}$ and $\mathcal{G}$. Based on the observation, if 
    \[
    \alpha_1' := \mathcal{G}(\pv',\pm';A) < \mathcal{G}(\pv'',\pm'';A) =: \alpha_2'
    \]
    are the extreme points of the domain of $f_j$ with $(\pv',\pm'), (\pv'',\pm'') \in Z_j$, then for all $n \in \mathbb{N}$ there exist, by Lemma \ref{lem:probability_surjectivity}, approximations $(\tv',\tm'), (\tv'',\tm'') \in Z_j$ such that
    \[
    \begin{cases}
        ((\tv')^{(p(n+n_0)+j+1:\infty)},(\tm')^{(p(n+n_0)+j:\infty)}) = ((\tv'')^{(p(n+n_0)+j+1:\infty)},(\tm'')^{(p(n+n_0)+j:\infty)}), \\
        ((\tv')^{(0:pn+j)},(\tm')^{(0:pn+j-1)}) = ((\pv')^{(0:pn+j)},(\pm')^{(0:pn+j-1)}), \\
        ((\tv'')^{(0:pn+j)},(\tm'')^{(0:pn+j-1)}) = ((\pv'')^{(0:pn+j)},(\pm'')^{(0:pn+j-1)}),
    \end{cases}
    \]
    which in turn implies that for all sufficiently large $n$,
    \[
    \mathcal{F}(\tv',\tm'; \transmat) < \alpha_1' + C d^{-pn-j} < \alpha_2' - C d^{-pn-j} < \mathcal{F}(\tv'',\tm''; \transmat).
    \]
    Since it is not hard to construct a continuous curve $\gamma: [0,1] \to Z_j$ connecting $(\tv',\tm')$ and $(\tv'',\tm'') \in Z_j$, it follows from the intermediate value theorem that
    \[
    Z_j(\alpha;\obsfunc) \ne \emptyset \text{ for all } \alpha \in [\alpha_1' + C d^{-pn-j}, \alpha_2' - C d^{-pn-j}],
    \]
    proving the claim when $n \to \infty$. 
    
    Next, we show that \eqref{eq:strong_duality_LDP} holds for all $\alpha \notin [\alpha_1,\alpha_2]$, or equivalently, $[\alpha_1',\alpha_2'] = [\alpha_1,\alpha_2]$. Suppose $\alpha > \alpha_2'$ (similar for $\alpha < \alpha_1'$), so the uniform convergence of $\mathcal{G}_n(\pv,\pm;A)$ implies $f_j^{(pn+j)}(\alpha) = - \infty$ for all sufficiently large $n$. Therefore, by (2) and (3),
    \begin{align}\label{eq:Legendre_infty}
    \begin{aligned}
        -\infty &= f_j^{(p (n+n_0) + j)}(\alpha) = \inf_{\mu \in \mathbb{R}} \left[- \mu \alpha + \lambda^{(p (n+n_0) +j)}(\mu)_{a_0}\right] \\
        &\ge \inf_{\mu \in \mathbb{R}} \left[- \mu \alpha_0 + \max_{a \in \alphabet_0} \lambda^{(pn+j)}(\mu)_{a} - C d^{-(pn+j)} (|\mu|+1)\right] \\
        &\ge \inf_{\alpha': \norm{\alpha' - \alpha} \le 2 C d^{-(pn+j)}} -\Lambda_j^*(\alpha') - 2 C d^{-(pn+j)}.
    \end{aligned}
    \end{align}
    Since the estimate holds for all $\alpha > \alpha_1'$ and all large $n$, $\Lambda_j^*$ is convex, and $\Lambda_j^*(\alpha_2') < \infty$ is finite, we deduce that $\Lambda_j^*(\alpha) = \infty$ for all $\alpha > \alpha_2'$, that is, $\alpha_2 = \alpha_2'$. 
    
    Finally, it remains to show that \eqref{eq:strong_duality_LDP} holds for $\alpha \in [\alpha_1,\alpha_2]$. The case $\alpha \in (\alpha_1,\alpha_2)$ is proved similarly as above. Let $\delta > 0$. By uniform convergence of $\mathcal{G}_n$, $f_j^{(pn+j)}(\alpha) < \infty$ for all sufficiently large $n$. It follows from (3) that for all large $n$,
    \begin{align}\label{eq:Legendre_finite}
    \begin{aligned}
        \sup_{\alpha': |\alpha'-\alpha| < \delta} f_j(\alpha') &\ge f_j^{(p (n+n_0) + j)}(\alpha) - C d^{-(p(n+n_0)+j)} \\
        &= \inf_{\mu \in \mathbb{R}} \left[- \mu \alpha + \lambda^{(p(n+n_0)+j)}(\mu)_{a_0} - C d^{-(p(n+n_0)+j)}\right] \\
        &\ge \inf_{\mu \in \mathbb{R}} \left[- \mu \alpha + \max_{a \in \alphabet_0} \left[\lambda^{(pn+j)}(\mu)_{a} - 2 C d^{-pn+j}(|\mu|+1)\right]\right] \\
        &\ge \inf_{\mu \in \mathbb{R}} \left[- \mu \alpha + \Lambda_j(\mu) - 3 C d^{-(pn+j)} \norm{\mu}\right] - 3 C d^{-(pn+j)} \\
        &\ge \inf_{\alpha': \norm{\alpha' - \alpha} \le 3 C d^{-(pn+j)}} -\Lambda_j^*(\alpha') - 3 C d^{-(pn+j)}.
    \end{aligned}
    \end{align}
    Since $f_j$ and $-\Lambda_j^*$ are continuous on $(\alpha_1,\alpha_2)$, the equality \eqref{eq:strong_duality_LDP} is proved by first letting $n \to \infty$ and then $\delta \to 0$. If $\alpha \in \{\alpha_1,\alpha_2\}$, the discussion is divided into the following two cases. If $\alpha_1 = \alpha_2$, then $f_j(\alpha_1) = 0$ as $\max_{(\pv,\pm) \in Z_j} \mathcal{F}(\pv,\pm;\transmat)$ has maximum $0$ by nonnegativity of Kullback-Leibler divergence, and $0 = f_j(\alpha_1) \ge -\Lambda_j^*(\alpha_1) \ge \Lambda_j^*(0) = 0$. If $\alpha_1 < \alpha_2$, it suffices to show that $f_j$ is continuous at $\alpha_1$ and $\alpha_2$, since under the circumstances both $f_j$ and $-\Lambda_j^*$ are continuous at $[\alpha_1,\alpha_2]$ and therefore coincide. This follows similarly from the intermediate value theorem as before. 
\end{proof}

\begin{proof}[Proof of Theorem \ref{thm:1}]
    For conciseness, we prove only the case $j=0$. Observing that the ratio $\frac{d-1}{d^{n-i+1}} / \frac{\norm{\level{i}}}{\norm{\lattice{n}}} = 1-d^{-n-1}$ is uniform for $0 \le i \le n$, we write
    \begin{gather*} 
        \widetilde{\mathcal{F}}_{n}(\mathsf{p},\mathsf{P};\transmat) := -\sum_{i=0}^{n-1} \frac{\norm{\level{n-i}}}{\norm{\lattice{n}}} \mathsf{p}^{(i+1)} \DKL(\pm[i] || \transmat) = \frac{\mathcal{F}_{n}(\mathsf{p},\mathsf{P};\transmat)}{1-d^{-n-1}}, \\
        \widetilde{\mathcal{G}}_{n}(\mathsf{p},\mathsf{P};\obsfunc)  := \sum_{i=0}^{n-1} \frac{\norm{\level{n-i}}}{\norm{\lattice{n}}} \pv[i+1] (\pm[i] \odot \log \obsfunc) \onevec = \frac{\mathcal{G}_{n}(\mathsf{p},\mathsf{P};\obsfunc)}{1-d^{-n-1}}.
    \end{gather*}
    Notably, the sample mean $Y_{pn} = \widetilde{\mathcal{G}}_{n}(\pv,\pm;\obsfunc)$ if the Markov chain $X = (X_g)_{g \in \tree}$ satisfies $(\dv{0}[n](X), \trm{0}[n-1](X)) = (\cev{\pv[0:n]},\cev{\pm[0:n-1]})$, which follows from a combinatorial argument: In level $i$ ($0 \le i < pn$), there exist $\norm{\level{i}} \dv{i}(X)_a = \norm{\level{i}} \pv[n-i]_a$ vertices associated with state $a$, which are responsible for $(d \cdot \norm{\level{i}} \dv{i}(X)_a) \cdot \trm{i}(X)_{a,b} = \norm{\level{i+1}} \pv[n-i]_a \pm[n-i-1]_{a,b}$ transitions from state $a$ to $b$ between level $i$ and $i+1$.

    Similar to the heuristic proof of Theorem \ref{thm:Cramer}, we first infer from Lemmas \ref{lem:dist_set_growth_rate} and \ref{lem:uniform_equiconvergence} that
    \begin{equation} \label{eq:combinatorial_manipulation}
        \begin{aligned}
            & \frac{1}{\norm{\lattice{pn}}} \log \mathbb{P}\left(\left.Y_{pn} \in S \right|X_{\epsilon} = a_0\right) \\
            = & \sup_{\alpha \in S} \left\{\frac{1}{\norm{\lattice{pn}}} \log \mathbb{P}\left(\widetilde{\mathcal{G}}_{pn}(\pv,\pm;\obsfunc) = \alpha\right): \pv[pn]=\stdvec{a_0},\right. \\
            & \hspace{5em} \left.\vphantom{\sum_{i=0}^{pn-1}} (\cev{\pv[0:pn]},\cev{\pm[0:pn-1]}) = (\dv{0}[pn](X), \trm{0}[pn-1](X))\right\} + O\left(\frac{\log \norm{\lattice{pn}}}{\norm{\lattice{pn}}}\right) \\
            = & \sup_{\alpha \in S} \left\{\widetilde{\mathcal{F}}_{n}(\pv,\pm; \transmat): \pv[pn]=\stdvec{a_0}, (\cev{\pv[0:pn]},\cev{\pm[0:pn-1]}) = (\dv{0}[pn](X), \trm{0}[pn-1](X)), \right. \\
            & \hspace{5em} \left.\vphantom{\sum_{i=0}^{pn-1}} \widetilde{\mathcal{G}}_{n}(\pv,\pm;\obsfunc) = \alpha\right\} + O\left(\frac{\log \norm{\lattice{pn}}}{\norm{\lattice{pn}}}\right),
        \end{aligned}
    \end{equation}
    To prove the upper bound of the theorem, we consider the closed $(1/k)$-neighborhood $S_k = \{\alpha' \in \mathbb{R}: \inf_{\alpha \in S} |\alpha' - \alpha| \le 1/k\}$ of $S$. Taking advantage of the uniform convergence and the uniform boundedness of $\mathcal{F}_n$ and $\mathcal{G}_n$, we can bound \eqref{eq:combinatorial_manipulation}, for sufficiently large $n$, from above by
    \begin{align*}
        & \sup_{\alpha \in S_k} \left\{\mathcal{F}(\pv,\pm; \transmat): (\pv,\pm) \in Z_0(\alpha;\obsfunc)\right\} + o\left(1\right),
    \end{align*}
    which, by Lemma \ref{lem:asymptotic_property} (4), coincides further with
    \begin{align*}
        \sup_{\alpha \in S_k} \inf_{\mu \in \mathbb{R}} \left[-\mu \alpha + \Lambda_0(\mu)\right] + o\left(1\right).
    \end{align*}
    The desired inequality is obtained by first letting $n \to \infty$ and then $k \to \infty$. For the lower bound, we analogously take $S_k = \{\alpha' \in \mathbb{R}: \inf_{\alpha \notin S} |\alpha' - \alpha| \ge 1/k\}$. By Lemma \ref{lem:dense_pattern} and the equicontinuity of $\mathcal{F}_n$ and $\mathcal{G}_n$, one derives the following uniform lower bound for \eqref{eq:combinatorial_manipulation}:
    \begin{align*}
        \sup_{\alpha \in S_k} \left\{\frac{\mathcal{F}(\pv,\pm; \transmat)}{1-d^{-pn-1}}: (\pv,\pm) \in Z^{(n)}((1-d^{-pn-1})\alpha;\obsfunc), \pv[n] = \stdvec{a_0}\right\} + o\left(1\right),
    \end{align*}
    Letting $n \to \infty$, the lower limit of the above (possibly $-\infty$) is further bounded from below by
    \begin{align*}
        \sup_{\alpha \in S_{k+1}} \left\{\mathcal{F}(\pv,\pm; \transmat): (\pv,\pm) \in Z_0(\alpha;\obsfunc)\right\},
    \end{align*}
    or, equivalently by Lemma \ref{lem:asymptotic_property} (4),
    \begin{align*}
        \sup_{\alpha \in S_{k+1}} \inf_{\mu \in \mathbb{R}} \left[-\mu \alpha + \Lambda_0(\mu)\right].
    \end{align*}
    The proof is concluded by letting $k \to \infty$.
\end{proof}
As a special case, the following corollary simplifies the expression of Theorem \ref{thm:1} when focusing on intervals.
\begin{corollary} \label{cor:1}
    Suppose that $X$, $Y$, and $\Lambda_j^*(\alpha)$ are as in Theorem \ref{thm:1}. Then,
    \begin{equation*}
        \begin{aligned}
            & \lim_{\varepsilon \to 0^{+}} \lim_{n \to \infty} \frac{1}{\norm{\lattice{pn+j}}} \log \mathbb{P}\left(\left.Y_{pn+j} \in (\alpha-\varepsilon,\alpha+\varepsilon)\right|X_{\epsilon}=a_0\right) = -\Lambda^{*}_{j}(\alpha).
        \end{aligned}
    \end{equation*}
\end{corollary}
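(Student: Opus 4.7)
The plan is to extract the corollary as a direct consequence of Theorem \ref{thm:1} together with the upper semicontinuity of $-\Lambda^*_j = F_{\infty,j}$ established in Lemma \ref{lem:asymptotic_property}(3).

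First I would apply Theorem \ref{thm:1} with $S = (\alpha-\epsilon,\alpha+\epsilon)$. Since $\mathring{S} = S$ and $\overline{S} = [\alpha-\epsilon,\alpha+\epsilon]$, the theorem supplies, for each fixed $\epsilon > 0$, the two-sided bound
\[
\sup_{\beta \in (\alpha-\epsilon,\alpha+\epsilon)} -\Lambda^*_j(\beta) \;\le\; \liminf_{n \to \infty} \mathcal{Q}_n(\epsilon) \;\le\; \limsup_{n \to \infty} \mathcal{Q}_n(\epsilon) \;\le\; \sup_{\beta \in [\alpha-\epsilon,\alpha+\epsilon]} -\Lambda^*_j(\beta),
\]
where $\mathcal{Q}_n(\epsilon)$ abbreviates the normalized conditional log-probability in the statement.

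Next I would let $\epsilon \to 0^+$. Taking $\beta = \alpha$ in the lower sup shows it is $\ge -\Lambda^*_j(\alpha)$ for every $\epsilon > 0$. Conversely, upper semicontinuity of $-\Lambda^*_j$ at $\alpha$ implies that for any $\gamma > -\Lambda^*_j(\alpha)$ there is a neighborhood of $\alpha$ on which $-\Lambda^*_j < \gamma$, so for sufficiently small $\epsilon$ the upper sup is at most $\max\{\gamma, -\Lambda^*_j(\alpha)\} = \gamma$; letting $\gamma \downarrow -\Lambda^*_j(\alpha)$ yields $\lim_{\epsilon \to 0^+} \sup_{\beta \in [\alpha-\epsilon,\alpha+\epsilon]} -\Lambda^*_j(\beta) = -\Lambda^*_j(\alpha)$. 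The same reasoning (combined with monotonicity in $\epsilon$) applies to the open-interval sup. Both outer bounds therefore collapse to $-\Lambda^*_j(\alpha)$, squeezing the inner $\liminf$ and $\limsup$ to the same value.

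The main obstacle is essentially cosmetic: Theorem \ref{thm:1} and the regularity packaged into Lemma \ref{lem:asymptotic_property}(3) already do all the substantive work. The only care needed is to interpret the iterated limit $\lim_{\epsilon \to 0^+}\lim_{n \to \infty}$ correctly; strictly speaking, what the squeeze delivers is the joint statement $\lim_{\epsilon \to 0^+}\liminf_{n} \mathcal{Q}_n(\epsilon) = \lim_{\epsilon \to 0^+}\limsup_{n} \mathcal{Q}_n(\epsilon) = -\Lambda^*_j(\alpha)$, which is the standard reading of the expression in large deviation theory. Note that no further hypothesis on $\alpha$ is needed: if $\Lambda^*_j(\alpha) = +\infty$, upper semicontinuity forces the upper sup to tend to $-\infty$ as well, keeping the identity consistent.
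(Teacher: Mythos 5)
Your proposal is correct and matches the paper's (very terse) proof: both take $S=(\alpha-\epsilon,\alpha+\epsilon)$ in Theorem \ref{thm:1} and then invoke the upper semicontinuity of $-\Lambda^*_j = F_{\infty,j}$ from Lemma \ref{lem:asymptotic_property}(3) to collapse the two outer suprema to $-\Lambda^*_j(\alpha)$ as $\epsilon\to 0^+$. The only difference is presentational: you spell out the squeeze and the edge case $\Lambda^*_j(\alpha)=+\infty$, while the paper compresses both into a one-line reference to ``concavity and upper semicontinuity.''
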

\begin{proof}
    It is a consequence of Theorem \ref{thm:1} with the existence of the limit justified by continuity $\Lambda^{*}_j(\alpha)$, a consequence of convexity and lower semicontinuity.
\end{proof}
\begin{proof}[Proof of Theorem \ref{thm:2}]
    For simplicity, we only give the proof of the case $j=0$.

    To begin, we note that the existence and uniqueness of $(\initvec^{(i)})_{i \in \mathbb{Z}}$ is a consequence of the Perron-Frobenius theorem (see, e.g., \cite[Theorems 8.2.8 and 8.4.4]{horn2012matrix}). Precisely, according to the characterization \eqref{eq:label_decomposition}, we can express $\transmat$ as a superdiagonal block matrix $\transmat = (\transmat_{i, j})_{0 \le i,j \le p-1}$ with each $\transmat_{i, j} \in \mathbb{R}_{\ge 0}^{\alphabet_i \times \alphabet_j}$, and $\transmat^p$ is a diagonal block matrix whose diagonal elements are primitive matrices. The existence and uniqueness follows immediately from the cited theorem.
    
    We first show that any maximizer $(\pv,\pm)$ of $\max_{(\pv,\pm) \in Z_0} \mathcal{F}(\pv,\pm;\transmat)$ satisfies
    \begin{equation} \label{eq:unique_maximizer}
        \mathcal{G}(\pv,\pm;\obsfunc) = \mathcal{G}(\pv^*,\pm^*;\obsfunc)
    \end{equation}
    where
    \[
    (\pv^*,\pm^*) = ((\initvec^{(0)},\initvec^{(1)},\cdots,\initvec^{(p-1)})^{\mathbb{Z}_+},(\transmat)^{\mathbb{Z}_+}) \in Z_0.
    \]
    Notably, $(\pv^*,\pm^*)$ is indeed a maximizer: For every $(\tv,\tm) \in Z_0$,
    \begin{equation} \label{eq:uniqueness}
        \DKL(\tm[i] || \transmat) \ge 0 \quad \text{ and } \quad \DKL(\tm[i] || \transmat) = 0 \text{ iff } \tm[i]=\transmat,
    \end{equation}
    from which the maximality follows. To demonstrate \eqref{eq:unique_maximizer}, we apply \eqref{eq:uniqueness} to derive that for any optimal solution $(\mathsf{p},\mathsf{P})$,
    \[
        \mathsf{p}^{(pN+i)} = \mathsf{p}^{(p(N+1)+i)} \transmat^p \text{ for all } i \ge 0, n \ge 0.
    \]
    Recall that $\transmat^p$ is a diagonal block matrix whose diagonal elements are primitive. The Perron-Frobenius theorem guarantees that
    \[
        \mathsf{p}^{(pN+i)}=\lim_{n \to \infty} \mathsf{p}^{(pN+i+pn)} \transmat^{pn} =\initvec^{(i)}.
    \]
    Since $\mathsf{p}^{(pn+i)}_{a}=\initvec^{(i)}_{a}$ is positive if and only if $a \in \alphabet_{-i}$ and $n,i \ge 0$, once again we can apply \eqref{eq:uniqueness} to derive $\pm[pn+i]_{a,b}=\transmat_{a,b}$ if $(a,b) \in \alphabet_{-i-1} \times \alphabet_{-i}$, and the claim is thus proved. Equivalently, by Lemma \ref{lem:asymptotic_property}, our claim implies $-\Lambda^{*}_0(\alpha)$ admits a unique maximum point 
    \[
    \alpha = \alpha^* := \mathcal{G}(\pv^*,\pm^*;\obsfunc) = \sum_{i=0}^{p-1} \frac{d^{-i}}{\sum_{\ell=0}^{p-1} d^{-\ell}} \sum_{a, b \in \alphabet} \initvec^{(i+1)}_a \transmat_{a,b} \log \obsfunc_{a,b},
    \]
    at which $-\Lambda^{*}_0$ attains its maximum $0$.

    The rest of the proof is an application of the Borel-Cantelli lemma. Observe that $\delta(\varepsilon) = \sup_{\norm{\alpha-\alpha^*} \ge \varepsilon} - \Lambda^{*}_{0}(\alpha) < 0$ for all $\varepsilon > 0$ due to the uniqueness of $\alpha^*$. This secures the existence of $N \in \mathbb{N}$ such that for all $n \ge N$,
    \begin{align*}
        \mathbb{P}\left(\left.\left\vert Y_{pn} - \alpha^*\right\vert > \varepsilon\right|X_{\epsilon} = a_0\right) \le e^{\delta(\epsilon) \norm{\lattice{pn}}}
    \end{align*}
    Summing over $n$, we obtain
    \[
    \sum_{n=N}^{\infty} \mathbb{P}\left(\left.\left\vert Y_{pn} - \alpha^*\right\vert > \varepsilon\right|X_{\epsilon} = a_0\right) \le \sum_{n=N}^{\infty} e^{\delta(\varepsilon) \norm{\lattice{pn}}} < \infty,
    \]
    which, by the Borel-Cantelli lemma, implies
    \[
    \mathbb{P}\left(\left.\limsup_{n \to \infty} \{\left\vert Y_{pn} - \alpha^*\right\vert > \varepsilon\}\right|X_{\epsilon} = a_0\right) = 0.
    \]
    This completes the proof since $\varepsilon > 0$ is arbitrary.
\end{proof}
\begin{example}
    Let $d=2$ and $\obsfunc$, $\transmat$ be matrices defined as
    \[
    \transmat = \begin{bmatrix}
        \frac{1}{2} & \frac{1}{2} \\
        1 & 0
    \end{bmatrix} \quad \text{and} \quad \obsfunc = \begin{bmatrix}
        1 & 1 \\
        2 & 0
    \end{bmatrix}.
    \]
    Since $\transmat$ is primitive, Theorem \ref{thm:1} ensures that the averages
    \[
    Y_n \to \sum_{a,b \in \alphabet} \initvec_{a} \transmat_{a,b} \log \obsfunc_{a,b} \quad \mathbb{P}\text{-a.s.}
    \]
    and satisfy the large deviation principle with rate $\Lambda_0^*(\alpha)$. Numerically, $-\Lambda^{*}_0(\alpha)$ is computed and depicted in Figure \ref{fig:example_1}. We note that $-\Lambda^{*}_0(\alpha)$ is finite if and only if $\alpha \in [0, \frac{2}{3} \log 2]$, where $\frac{2}{3} \log 2 \approx 0.4621$. Furthermore, the maximum point is $\alpha = \frac{1}{3} \log 2 \approx 0.2310$.
    \begin{figure}
        \centering
        \includegraphics[width=0.5\textheight]{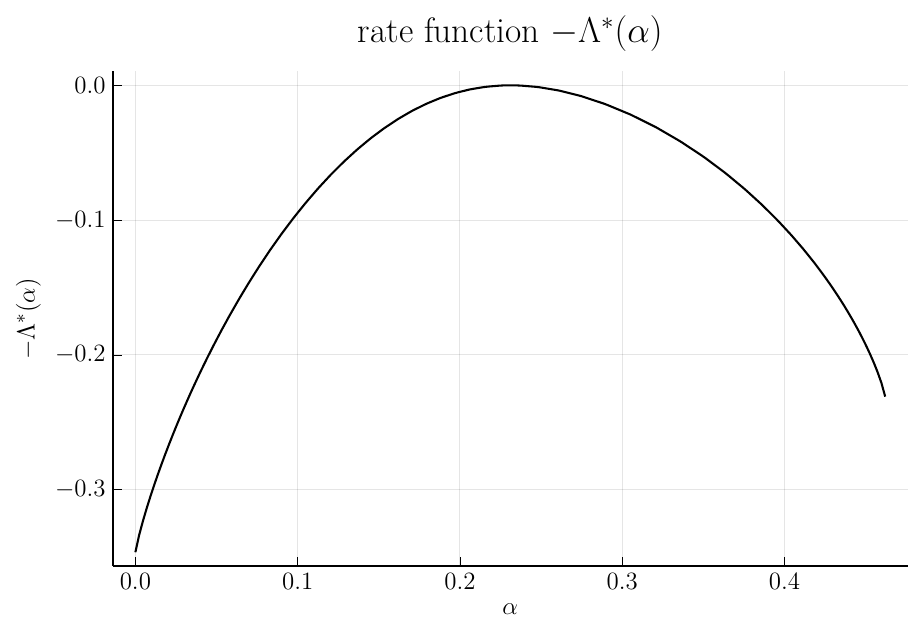}
        \caption{rate function $\Lambda^*_0$}
        \label{fig:example_1}
    \end{figure}
\end{example}
In the following, we present yet another example of irreducible $\transmat$ in order to (a) demonstrate the upper and lower limit of the conditional sample mean might differ, and (b) compare the conditional sample mean, unconditional sample mean, and their expectations.
\begin{example} \label{ex:examples}
     We consider $d=2$ and
    \[
    \obsfunc = \transmat = \begin{bmatrix}
        0 & \frac{1}{3} & \frac{2}{3} \\
        1 & 0 & 0 \\
        1 & 0 & 0
    \end{bmatrix}, \initvec = \begin{bmatrix}\frac{1}{6} & \frac{1}{3} & \frac{1}{2}\end{bmatrix}.
    \]
    for which the conditional sample mean associated with $\alphabet_0:=\{0\}$ and on $\alphabet_1:=\{1,2\}$ are plotted in Figure \ref{fig:irr}, respectively. To address (a), let $\alpha_0^*$ and $\alpha_1^*$ be the maximum points of $\Lambda^*_0$ and $\Lambda^*_1$, respectively. Theorem \ref{thm:1} then guarantees that almost surely
    \[
    \alpha^-:=\liminf_{n \to \infty} Y_n = \alpha_1^*
    \quad \text{and} \quad \alpha^+:=\limsup_{n \to \infty} Y_n = \alpha_0^*.
    \]
    Regarding (b), the rate function $\Lambda^*$ of the unconditional sample mean is 
    \[
    \Lambda^*(\alpha) = \max \{\Lambda^*_j(\alpha): \initvec|_{\alphabet_j} \ne 0\} = \max \{\Lambda^*_0(\alpha),\Lambda^*_1(\alpha)\},
    \]
    which attains its maximum at $\alpha_0^*$ and $\alpha_1^*$. For the expectation of sample mean, we deduce by the dominated convergence theorem that
    \begin{gather*}
        \beta^-:=\liminf_{n \to \infty} \mathbb{E}\left(Y_n\right) = \min_{0 \le i \le 1} \sum_{j=0}^{1} \sum_{a \in \alphabet_j} \initvec_a \alpha_{i+j}^*, \\
        \beta^+:=\limsup_{n \to \infty} \mathbb{E}\left(Y_n\right) = \max_{0 \le i \le 1} \sum_{j=0}^{1} \sum_{a \in \alphabet_j} \initvec_a \alpha_{i+j}^*.
    \end{gather*}
    Notably, $\alpha^- < \beta^- < \beta^+ < \alpha^+$, as opposed to the case when $\transmat$ is primitive and the four numbers coincide. This indicates that the conditional sample mean provides more information than the unconditional one or the expectation. In fact, in some extreme cases, say
    \[
    \obsfunc = \transmat = \begin{bmatrix}
        0 & 1 & 1 \\
        \frac{1}{2} & 0 & 0 \\
        \frac{1}{2} & 0 & 0
    \end{bmatrix}, \initvec = \begin{bmatrix}
        \frac{1}{2} & \frac{1}{4} & \frac{1}{4}
    \end{bmatrix},
    \]
    we observe that
    \[
    \alpha^-=\frac{1}{3} \log 2, \qquad \alpha^+=\frac{2}{3} \log 2, \qquad \beta^-=\beta^+=\frac{1}{2} \log 2,
    \]
    and that for every $t \in \tshift[][\incmat] (= \mathrm{supp} \mathbb{P})$, the sample mean can be explicitly calculated as
    \[
    Y_n(t) \in \left\{\alpha^- + o(1), \alpha^+ + o(1)\right\}.
    \]
    Nevertheless, it is interesting yet unsurprising that not a single labeled tree $t \in \tshift[][\incmat]$ admits a sample mean approaching $\beta^+ = \beta^-$.
    \begin{figure}
        \centering
        \includegraphics[width=0.5\textheight]{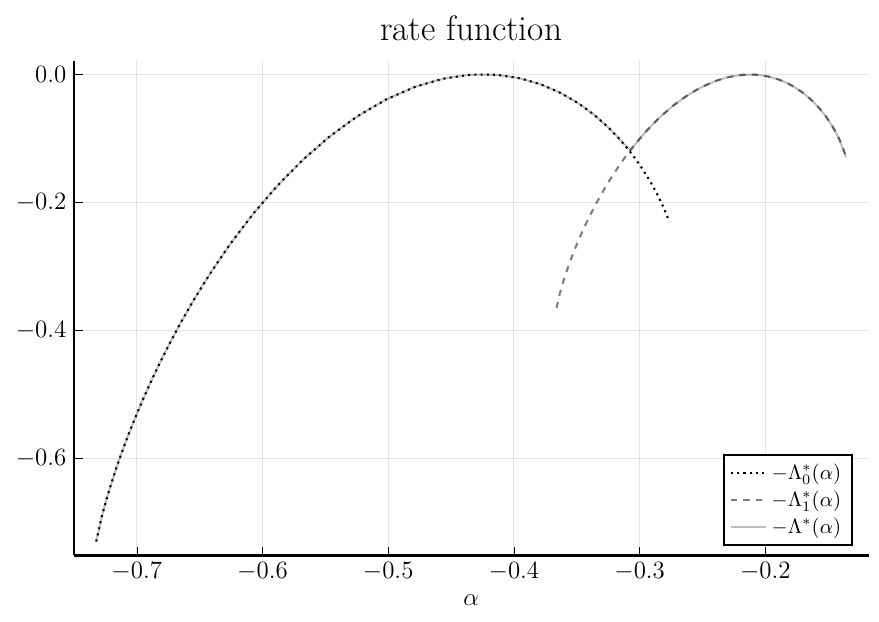}
        \caption{rate function $\Lambda^*_0$, $\Lambda^*_1$, and $\Lambda^*$}
        \label{fig:irr}
    \end{figure}
\end{example}

\section{Hausdorff dimensions of irreducible Markov hom tree-shifts} \label{sec:Hausdorff_dimension_irr}
The section is dedicated to presenting the Hausdorff dimension formula for Markov hom tree-shifts $\tshift[][\incmat] \subset \alphabet^{\tree}$ when $\incmat$ is an irreducible matrix. We begin by considering the following metric on $\alphabet^{\tree}$:
\begin{equation} \label{eq:tree-shift_metric}
    D(x,y) = e^{-\sup\{ \norm{\lattice{n}}: x|_{\lattice{n}} = y|_{\lattice{n}} \}}.
\end{equation}
Notably, this metric coincides with the canonical metric when $\tree = \mathbb{Z}_+$ is degenerate and the choice is due to the its intimate relation with topological entropy of tree-shifts, which was considered by Petersen and Salama \cite{Petersen2018b,Petersen2020} as a natural generalization of its counterpart for one-sided subshifts. See also \cite{Petersen2021} for related results for asymptotic pressure of tree-shifts, a generalized form of the topological entropy. Specifically, the \emph{topological entropy} of a Markov hom tree-shift $\tshift[][\incmat]$ is defined by
\[
h_{top}(\tshift[][\incmat]) := \limsup_{n \to \infty} \frac{\log \norm{\block{n}(\tshift[][\incmat])}}{\norm{\lattice{n}}},
\]
where the sequence actually converges as established in the aforementioned work. Using this definition and the existence of the limit, we observe the following equalities regarding box-counting dimensions:
\begin{align*}
    &\overline{\dim}_B \tshift[][\incmat] := \limsup_{r \to \infty} -\frac{\log \mathcal{N}_r(\tshift[][\incmat])}{\log r} = d \cdot h_{top}(\tshift[][\incmat]), \\
    &\underline{\dim}_B \tshift[][\incmat] := \liminf_{r \to \infty} -\frac{\log \mathcal{N}_r(\tshift[][\incmat])}{\log r} = h_{top}(\tshift[][\incmat]),
\end{align*}
where $\mathcal{N}_r(\tshift[][\incmat])$ denotes the minimal number of closed $r$-balls needed to cover the set $\tshift[][\incmat]$. It is worth mentioning that the multiplicative difference by $d$ arises from the fact that closed $r$-balls $B_r(t)$ are aliases of the same set for all $r \in [e^{-\norm{\lattice{n}}}, e^{-\norm{\lattice{n-1}}}) = [e^{-\frac{d^{n+1}-1}{d-1}}, e^{-\frac{d^{n}-1}{d-1}})$, causing $\mathcal{N}_r(\tshift[][\incmat])$ to remain constant over the interval. As a natural follow-up question, it would be interesting to determine the packing and Hausdorff dimensions of the Markov hom tree-shifts, where the former is a known to coincide with $\overline{\dim}_B \tshift[][\incmat]$ when $\incmat$ is irreducible (see Appendix \ref{sec:packing_dim}) and the latter is our primary focus of this section. As mentioned in the introduction, this goal is achieved by finding a nonlinear transfer operator to link the Hausdorff dimension of $\tshift[][\incmat]$ to the eigenvalue of the operator.


\subsection{Nonlinear Perron-Frobenius theory} \label{sec:Perron-Frobenius}
The nonlinear Perron-Frobenius theory, as its name suggests, studies the eigenspace of a class of (not necessarily linear) mappings and reproduces several classical results regarding nonnegative primitive/irreducible matrix transformations. In this work, we will consider such an analysis under the framework of \cite{Lemmens2012}, of which the setting can be described as follows. Let $f: \mathbb{R}_{\ge 0}^{\alphabet} \to \mathbb{R}_{\ge 0}^{\alphabet}$ be a continuous function. It is said to be \emph{order-preserving} if $f(x) \le f(y)$ for every $x \le y$, and it is called \emph{homogeneous} if $f(\alpha x) = \alpha f(x)$ for all $\alpha \in \mathbb{R}_{\ge 0}$, where the comparison of two vectors via $\le$ is taken entrywise. In addition, we say that $f$ is \emph{multiplicatively convex} if $\log \circ f \circ \exp$ is a convex function on $\mathbb{R}_{> 0}^{\alphabet}$ (see Definition \ref{def:convexity}), where $\log$ and $\exp$ are applied entrywise as explained in Section \ref{sec:preliminaries}. Within this framework, we establish that the operator $\mathcal{L}_{\incmat,r}$ (recall \eqref{eq:transfer_operator}) possesses all the aforementioned characteristics and that its eigenspace can be characterized as follows. We note that throughout the section, $\Norm{\cdot}$ denotes the $1$-norm.
\begin{proposition} \label{prop:eigenspace}
    Suppose $\incmat$, $\mathcal{L}_{\incmat,r}$, and $r \in \mathcal{R}_{p,d}$ be as in Theorem \ref{thm:Hausdorff_dim}. Then, the following properties hold.
    \begin{enumerate}
        \item $\mathcal{L}_{\incmat,r}: \mathbb{R}_{\ge 0}^{\alphabet} \to \mathbb{R}_{\ge 0}^{\alphabet}$ is continuous, order-preserving, homogeneous, analytic on $\mathbb{R}_{>0}^{\alphabet}$, and multiplicatively convex.
        \item Suppose $\mathcal{L}_{\incmat,r}$ maps $\mathcal{C}' = \{x \in \mathbb{R}_{\ge 0}^{\alphabet}: x_a = 0 \text{ if } a \notin \alphabet'\}$ into itself for some $\alphabet' \subseteq \alphabet$. Then, there exists an eigenvector $v \in \mathcal{C}'$ associated with eigenvalue $\rho(\mathcal{L}_{\incmat,r}|_{\mathcal{C}'})$ such that 
        \[
        \limsup_{n \to \infty} \Norm{\mathcal{L}^n_{\incmat,r}(w)}^{1/n} \le \rho(\mathcal{L}_{\incmat,r}|_{\mathcal{C}'}) \text{ for all } w \in \mathcal{C}'.
        \]
        In particular, if $\incmat$ is irreducible, for each $\alphabet_j \in \mathcal{P}(\incmat)$ there exists a unique (up to scaling) eigenvector $v^{(j)} \in \mathcal{C}_j$, whose associated eigenvalue $\rho_{\alphabet_j}(\mathcal{L}_{\incmat,r})$ satisfying the stated property on $\mathcal{C}_j$, and $v^{(j)}_a > 0$ for all $\alphabet_j$.
        \item If $\incmat$ is irreducible, then for every $x \in \mathcal{C}_j^+ := \{x \in \mathcal{C}_j: x_a > 0 \text{ if } a \in \alphabet_j\}$ there exists $0 < \theta < 1$ such that
        \[
        \limsup_{n \to \infty} \Norm{\frac{\mathcal{L}_{\incmat,r}^n(x)}{\Vert\mathcal{L}_{\incmat,r}^n(x)\Vert} - \frac{v^{(j)}}{\Vert v^{(j)}\Vert}}^{1/n} = \theta,
        \]
        where $v^{(j)} \in \mathcal{C}_j^+$ is the unique eigenvector as in (2).
    \end{enumerate}
\end{proposition}
\begin{proof}
    (1) The first four conditions follow from a routine check of definition, while the multiplicative convexity follows from Lemma \ref{lem:convexity_backbone} applied to $\Psi_{\incmat,s}$ ($s > 0$). which extends naturally to $\mathcal{L}_{\incmat,r} = \Psi_{\incmat,r_{p-1}} \circ \cdots \circ \Psi_{\incmat,r_{0}}$.

    (2) According to \cite[Corollary 5.4.2]{Lemmens2012}, $\mathcal{L}_{\incmat,r}$ admits an eigenvector $v \in \mathcal{C}'$ such that $\mathcal{L}_{\incmat,r}(v) = \rho(\mathcal{L}_{\incmat,r}|_{\mathcal{C}'}) \cdot v$, while \cite[Theorem 5.3.1]{Lemmens2012} asserts that $\limsup_{n \to \infty} \Norm{\mathcal{L}_{\incmat,r}(w)}^{1/n} \le \rho(\mathcal{L}_{\incmat,r}|_{\mathcal{C}'})$ for all $w \in \mathcal{C}'$.  We observe that for any $x \in \mathbb{R}_{\ge 0}^{\alphabet}$, $\Psi_{\incmat,s}(x)_a > 0$ if and only if $(\incmat x)_a > 0$. Recursively applying this property one deduces that $\mathcal{L}_{\incmat,r}(x)_a > 0$ if and only if $(\incmat^p x)_a > 0$. Now if $\incmat$ is irreducible, $\mathcal{P}(\incmat)$ is a partition and $\incmat_{a,b}>0$ if and only $(a,b) \in \alphabet_j \times \alphabet_{j+1}$ for some $j \in [p]$. Notably, $\mathcal{L}_{\incmat,r}$ maps $\mathcal{C}_j$ to $\mathcal{C}_j$, each of which hence admits an eigenvector $v^{(j)} \in \mathcal{C}_j$. Moreover, $v^{(j)}_a > 0$ for all $a \in \alphabet_j$ as every state $a$ plays the role of $a_0$ in \eqref{eq:tree_assumption}. By denoting the natural inclusion as $\iota_j: \mathcal{C}_j \to \mathbb{R}_{\ge 0}^{\alphabet}$, we have that $\iota_j^{-1} \circ \mathcal{L}_{\incmat,r} \circ \iota_j$ maps $\mathbb{R}_{\ge 0}^{\alphabet_j}$ into $\mathbb{R}_{\ge 0}^{\alphabet_j}$ and the derivative of $\iota_j^{-1} \circ \mathcal{L}_{\incmat,r} \circ \iota_j$ at $\iota_j^{-1}(v^{(j)})$ is primitive. Therefore, by \cite[Corollary 6.5.8]{Lemmens2012}, $\iota_j^{-1}(v^{(j)})$ is unique up to scaling, and thus so is $v^{(j)}$.  

    (3) We note that all the properties of $\mathcal{L}_{\incmat,r}$ in (1) are inherited by $\iota_j^{-1} \circ \mathcal{L}_{\incmat,r} \circ \iota_j$. Hence, the existence of $\theta$ follows as a consequence of \cite[Theorem 4.7]{lins2023convergence}. 
\end{proof}
\subsection{Some lemmas} \label{sec:some_lemmas}
For conciseness, we suppress, unless mentioned otherwise, $\incmat$ from $\mathcal{F}_{n:m}(\pv,\pm;\incmat)$ for the rest of the paper. We first state a number of technical lemmas, whose proofs are postponed until the appendix. These lemmas establish the desired minimax properties, bridging the upper bound of the Hausdorff dimension obtained through efficient covering with the lower bound obtained via the mass distribution principle, as discussed in the coming subsections.
\begin{lemma} \label{lem:periodic_duality}
    The following minimax property holds.
    \begin{equation} \label{eq:periodic_duality}
        \max_{\substack{(\pv,\pm) \in Z_0: \\ p\text{-periodic}}} \min_{s \in \Gamma_{[p]}} \sum_{m=0}^{p -1} s_m \mathcal{F}_{m:\infty}(\pv,\pm) = \min_{s \in \Gamma_{[p]}} \max_{\substack{(\pv,\pm) \in Z_0: \\ p\text{-periodic}}} \sum_{m=0}^{p -1} s_m \mathcal{F}_{m:\infty}(\pv,\pm).
    \end{equation}
\end{lemma}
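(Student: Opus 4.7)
My plan is to apply Sion's minimax theorem after a reparametrization that convexifies the feasible set, since the constraint $\pv[i] = \pm[i]\pv[i+1]$ defining $Z$ is bilinear and the set of $p$-periodic $(\pv,\pm) \in Z_0$ is therefore not convex in the original coordinates.

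First, for $p$-periodic $(\pv,\pm)$ the infinite series defining $f_m(\pv,\pm)$ collapses: grouping the weights $\tfrac{d-1}{d^{i+1}}$ by the residue class of $(m+i) \bmod p$ yields
\[
f_m(\pv,\pm) = \sum_{k=0}^{p-1} c_{m,k}\, \Phi(\pm[k] \mid \mathbf{A})^T \pv[k+1],
\]
with strictly positive constants $c_{m,k}$ summing to $1$ (indices on $\pm$, $\pv$ mod $p$). I would then reparametrize by the joint distributions $\mu^{(i)}_{a,b} := \pm[i]_{a,b}\, \pv[i+1]_b$ for $i = 0, \ldots, p-1$; under this change of variables the admissible tuples form
\[
\mathcal{M} := \Bigl\{ \mu : \operatorname{supp} \mu^{(i)} \subseteq (\alphabet_{-i} \times \alphabet_{-(i+1)}) \cap \operatorname{supp} \mathbf{A},\ \textstyle\sum_{a,b} \mu^{(i)}_{a,b} = 1,\ \sum_a \mu^{(i-1)}_{a,b} = \sum_c \mu^{(i)}_{b,c} \Bigr\}
\]
(indices mod $p$), which is a convex compact polytope since all defining relations are linear, and the objective depends on $(\pv,\pm)$ only through $\mu$.

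Next I would establish that $\mu^{(i)} \mapsto \Phi(\pm[i] \mid \mathbf{A})^T \pv[i+1]$ is concave. A direct computation gives
\[
\Phi(\pm[i] \mid \mathbf{A})^T \pv[i+1] = -\sum_{a,b} \mu^{(i)}_{a,b} \log \mu^{(i)}_{a,b} + \sum_b \Bigl(\sum_a \mu^{(i)}_{a,b}\Bigr) \log \Bigl(\sum_a \mu^{(i)}_{a,b}\Bigr) + \sum_{a,b} \mu^{(i)}_{a,b} \log \mathbf{A}_{a,b}.
\]
The first two terms equal the conditional entropy $H(X \mid Y)$ of $\mu^{(i)}$ viewed as a joint law on $\alphabet \times \alphabet$, which is concave in $\mu^{(i)}$ by the log-sum inequality, while the third term is linear. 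Hence each $f_m$, and therefore $\sum_m s_m f_m$, is concave in $\mu \in \mathcal{M}$ for every fixed $s \in \Gamma_{[p]}$.

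Finally, since $(s,\mu) \mapsto \sum_m s_m f_m$ is linear (hence upper semicontinuous and concave) in $\mu$ on the convex compact $\mathcal{M}$ and linear (hence lower semicontinuous and convex) in $s$ on the convex compact simplex $\Gamma_{[p]}$, Sion's minimax theorem delivers $\max_\mu \min_s = \min_s \max_\mu$; translating back to $(\pv,\pm)$ gives \eqref{eq:periodic_duality}. The main remaining difficulty is essentially bookkeeping: verifying that the support conditions defining $\mathcal{M}$ faithfully encode $Z_0$ together with $p$-periodicity, which should follow from the cyclic decomposition $\mathcal{P}(\mathbf{A})$ induced by \eqref{eq:tree_assumption}.
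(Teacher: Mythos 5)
Your argument is correct (modulo a slip: in the final paragraph you write ``linear'' in $\mu$ where you mean ``concave'', as you correctly established in the preceding paragraph), and it takes a genuinely different route from the paper. The paper handles the non-convexity of the $p$-periodic slice of $Z_0$ by treating $\pm[0:p-1]$ and $\pv[p]$ as independent variables, appending the periodicity constraint $\prod_\ell\pm[\ell]\pv[p]=\pv[p]$ via a Lagrange multiplier $\mu$, and then invoking its recursive convexity machine (Lemma~\ref{lem:convexity_backbone} applied to $\lambda^{(i)}(s,\mu)$) to justify $p$ successive swaps of $\sup_{\pm[i]}$ with $\inf_{(s,\mu)}$. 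You instead pass to the pair (edge) marginals $\mu^{(i)}_{a,b}=\pm[i]_{a,b}\pv[i+1]_b$, under which the feasible set becomes a convex compact polytope defined by linear marginal-consistency constraints, and each $\Phi(\pm[i]|\mathbf{A})^T\pv[i+1]$ becomes the conditional entropy of $\mu^{(i)}$, which is concave by joint convexity of $(a,b)\mapsto a\log(a/b)$ (and the $\log\mathbf{A}$ term actually vanishes since $\mathbf{A}$ is a $0$--$1$ matrix); a single application of Sion's theorem then finishes. The trade-off is that your convexification via pair measures is a classical and arguably cleaner argument for this specific finite-dimensional periodic reduction, whereas the paper's Lagrangian-plus-recursive-convexity route is the same machinery used throughout (notably in Lemma~\ref{lem:cover_strong_duality}, whose infinite-horizon structure does not admit so direct a finite convexification), so the paper gets uniformity of method at the cost of more bookkeeping here.
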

\noindent The proof of Lemma \ref{lem:periodic_duality} is presented in Appendix \ref{sec:periodic_duality}.
\begin{remark}
    We should note that $\mathcal{P}(\incmat)$ is a partition if $\incmat$ is irreducible. Under the circumstances, if $(\pv,\pm)$ is a $p$-periodic optimizer in Lemma \ref{lem:periodic_duality}, then so is $(\pv,\pm^*)$, where
    \[
    {\pm^*}^{(i)}_{a,b} = {\pm^*}^{(0)}_{a,b} := \pm[j]_{a,b} \quad \text{for all } (a,b) \in \alphabet_{-j-1} \times \alphabet_{-j} \text{ and } i \in [p].
    \]
    Therefore, the value of \eqref{eq:periodic_duality_2} remains unchanged even with every appearance of ``$\max_{\substack{\pm \in \alphsm^p \\ \pv[p] \in \mathcal{C}_0}}$'' replaced by ``$\max_{\substack{(\pv,\pm) \in Z_0: \\ \pm \text{ is } 1\text{-periodic} \\ \pv \text{ is } p\text{-periodic}}}$''.
\end{remark}
For convenience, we introduce the following sequence before we present our quintessential lemma.
\begin{equation} \label{eq:t^*_def}
    t^*_{n,N} = \begin{cases}
        \left(N +\frac{1}{d^p - 1}\right)^{-1} \cdot \frac{d^p}{d^p-1} & \text{if } n = 0; \\
        \left(N +\frac{1}{d^p-1}\right)^{-1} & \text{if } p | n \text{ and } 0 < n < pN ; \\
        0 & \text{otherwise.}
    \end{cases}
\end{equation}
\begin{lemma} \label{lem:periodic_maximality}
    Suppose that $\incmat$ is irreducible and $\rho_{\alphabet_0}(\mathcal{L}_{\incmat,r})$ is as in Proposition \ref{prop:eigenspace}. The following minimax properties hold.
    \begin{align}
        & \min_{r \in \mathcal{R}_{p,d}} \left(\sum_{\ell=0}^{p-1}\prod_{i=0}^{\ell} r_{i}^{-1}\right)^{-1} \cdot \log \rho_{\alphabet_0}(\mathcal{L}_{\incmat,r})\label{eq:spectral_radius_maximality} \\
        = & \lim_{N \to \infty} \min_{s \in \Gamma_{[p]}} \max_{(\pv,\pm) \in Z_0} \sum_{m=0}^{p -1} s_m \sum_{n=0}^{pN-1} t^*_{n,N} \mathcal{F}_{n+m:\infty}(\pv,\pm) \label{eq:covering_maximality} \\
        = & \min_{s \in \Gamma_{[p]}} \max_{\substack{(\pv,\pm) \in Z_0: \\ \pm \text{ is } 1\text{-periodic} \\ \pv \text{ is } p\text{-periodic}}} \sum_{m=0}^{p -1} s_m \mathcal{F}_{m:\infty}(\pv,\pm) \label{eq:irreducible_markov_maximality_dual} \\
        = & \max_{\substack{(\pv,\pm) \in Z_0: \\ \pm \text{ is } 1\text{-periodic} \\ \pv \text{ is } p\text{-periodic}}} \min_{s \in \Gamma_{[p]}} \sum_{m=0}^{p -1} s_m \mathcal{F}_{m:\infty}(\pv,\pm) \label{eq:irreducible_markov_maximality}
    \end{align}
    Furthermore, any maximizer $(\pv^*,\pm^*)$ of \eqref{eq:irreducible_markov_maximality} can be chosen such that $\pm[0]$ is irreducible and ${\pv[0]}^*$ is the unique left eigenvector of $({\pm[0]}^*)^p$ in $\mathcal{C}_0$.
\end{lemma}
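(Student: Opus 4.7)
The plan is to chain the equalities among \eqref{eq:spectral_radius_maximality}--\eqref{eq:irreducible_markov_maximality} via three ingredients: Sion's minimax theorem, a thermodynamic variational identity for $\log \rho_{\alphabet_0}(\mathcal{L}_{\mathbf{A},r})$, and an averaging argument for the weights $t^*_{n,N}$. The equality \eqref{eq:irreducible_markov_maximality_dual} = \eqref{eq:irreducible_markov_maximality} follows directly from Lemma \ref{lem:periodic_duality} combined with the remark after it: the minimax is known for all $p$-periodic $(\pv, \pm) \in Z_0$, and restricting $\pm$ further to be 1-periodic leaves both maxima unchanged (by replacement on columns where $\pv[i+1]_b = 0$), so the minimax descends to the 1-periodic class.

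The core step is \eqref{eq:spectral_radius_maximality} = \eqref{eq:irreducible_markov_maximality}, and relies on a pressure-like formula for $\log \rho_{\alphabet_0}(\mathcal{L}_{\mathbf{A}, r})$. Iterating the log-sum-exp identity
\[
\log(\mathbf{A}^T e^x)_a = \sup_{\pm: \text{stochastic}} \pm_{\cdot, a}^T x + \Phi(\pm|\mathbf{A})_a
\]
through the $p$ compositions $\Psi_{\mathbf{A}, r_{p-1}} \circ \cdots \circ \Psi_{\mathbf{A}, r_0}$ and contracting against a stationary probability vector $\pv[0] \in \mathcal{C}_0$ of $\pm[0] \cdots \pm[p-1]$ (with $\pv[i] = \pm[i] \pv[i+1]$) yields, using $\prod_j r_j = 1$,
\[
\log \rho_{\alphabet_0}(\mathcal{L}_{\mathbf{A}, r}) = \sup \sum_{i=0}^{p-1} \left(\prod_{j=i}^{p-1} r_j\right) {\pv[i+1]}^T \Phi(\pm[i] | \mathbf{A}),
\]
with the sup over stochastic $\pm[i]$ and $p$-periodic $\pv$. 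Perron--Frobenius uniqueness (Proposition \ref{prop:eigenspace}) then forces the sup to be attained at 1-periodic $\pm$. On the other hand, for $p$-periodic $\pv$ and 1-periodic $\pm = E$, a direct geometric-series expansion modulo $p$ recasts $\sum_m s_m f_m(\pv, E)$ as $\Phi(E|\mathbf{A})^T \sum_j w_j(s) \pv[j]$ with $w(s) \in \Gamma_{[p]}$ a convex combination of cyclically-shifted geometric distributions. Dividing the $\log \rho$ formula by $\sum_{\ell=0}^{p-1} \prod_{i=0}^\ell r_i^{-1}$ and identifying (modulo a cyclic shift) the two induced weight sets $\{(\prod_{j=i}^{p-1} r_j / \sum_k \prod_{j=k}^{p-1} r_j)_i : r \in \mathcal{R}_{p,d}\}$ and $\{w(s) : s \in \Gamma_{[p]}\}$ matches $\min_r$ in \eqref{eq:spectral_radius_maximality} with $\min_s$ in \eqref{eq:irreducible_markov_maximality}.

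For \eqref{eq:spectral_radius_maximality} = \eqref{eq:covering_maximality}, the weights $t^*_{n, N}$ concentrate on multiples of $p$ in $[0, pN)$ with equal weights of order $1/N$ and a boundary correction at $n=0$; for $p$-periodic $(\pv, \pm)$, $\sum_n t^*_{n, N} f_{n+m}(\pv,\pm) \to f_m(\pv,\pm)$ directly, while for general sequences in $Z_0$ the combination of compactness (Lemma \ref{lem:1}), continuity of $f_m$, and the strong duality of Lemma \ref{lem:cover_strong_duality} forces subsequential limits to coincide with the $p$-periodic value, yielding the reverse bound via upper semicontinuity. The irreducibility of the maximizer follows by perturbing any reducible maximizer with a small mixture from an irreducible stochastic matrix within the zero pattern of $\mathbf{A}$ and passing to the limit; the condition that ${\pv[0]}^*$ be a right eigenvector of $({\pm[0]}^*)^p$ is automatic from the 1- and $p$-periodicity. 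The principal technical obstacle is the core step: verifying the 1-periodic reduction of the sup in the pressure formula and carefully matching the cyclic-shift correspondence between $r \in \mathcal{R}_{p,d}$ and $s \in \Gamma_{[p]}$ so that the two minima coincide.
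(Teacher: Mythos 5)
Your proposal captures the right high-level picture (relate the spectral radius to a minimax over empirical transitions via a variational formula, then discretize), and the observation that \eqref{eq:irreducible_markov_maximality_dual}~$=$~\eqref{eq:irreducible_markov_maximality} follows from Lemma~\ref{lem:periodic_duality} and the remark after it is correct. However, there are several genuine gaps, and one of your four steps is simply wrong.

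First, your core step rests on a ``pressure formula'' for $\log\rho_{\alphabet_0}(\mathcal{L}_{\mathbf{A},r})$ that you state but do not derive. The telescoping computation with weights $c_i=\prod_{j<i}r_j^{-1}$, $c_p=c_0=1$ (using $\prod_j r_j=1$) does give the identity on the eigenvector, but turning it into a characterization of $\log\rho$ as a supremum over all stochastic $(\pv,\pm)\in Z_0$ with $\pv$ $p$-periodic requires an inequality in the other direction (the weak side) together with a precise identification of where the sup is attained; the claim ``Perron--Frobenius uniqueness then forces the sup to be attained at 1-periodic $\pm$'' is asserted without argument and is not immediate. The paper avoids this by working instead with the concrete recursion $e^{q_{i+1,N}^{-1}\lambda^{(i+1),N}}=\Psi_{\mathbf{A},q_{i,N}/q_{i+1,N}}(e^{q_{i,N}^{-1}\lambda^{(i),N}})$, which identifies $\lambda^{(pN),N}$ with an iterate of $\mathcal{L}_{\mathbf{A},r}$ and ties directly to \cite[Proposition~11]{ban2023topological}.

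Second, your treatment of \eqref{eq:spectral_radius_maximality}~$=$~\eqref{eq:covering_maximality} is a hand-wave where the paper's argument is genuinely delicate. ``Compactness, continuity, and strong duality force subsequential limits to coincide with the $p$-periodic value'' does not by itself control the non-periodic competitors in $Z_0$, and you have not addressed why the $\min$ over $r\in\mathcal{R}_{p,d}$ is attained (rather than merely an infimum). The paper's proof requires: the explicit bijection $s\mapsto q(s)\mapsto r(s)$ between $\Gamma_{[p]}$ and $\mathcal{R}_{p,d}$ (Lemma~\ref{lem:bijection}), the uniform Lipschitz bound \eqref{eq:independence_identity} in $s$ to extract a convergent subsequence of minimizers, the finite-horizon approximation \eqref{eq:finite_approximation}, and lower/upper comparisons through the recursion \eqref{eq:iteration} and the eigenvector estimates from Proposition~\ref{prop:eigenspace}(2)--(3). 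Your ``identifying (modulo a cyclic shift) the two induced weight sets'' is essentially a restatement of Lemma~\ref{lem:bijection} without a proof.

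Third, the argument for irreducibility of the maximizer is incorrect as stated. Perturbing a reducible maximizer toward an irreducible stochastic matrix and then ``passing to the limit'' just recovers the reducible maximizer you started with; it does not show that an irreducible maximizer exists. Because the objective couples $\pm$ and $\pv$ through the stationarity constraint $\pv[i]=\pm[i]\pv[i+1]$, a perturbation of $\pm$ also forces a change of $\pv$, and the net effect on $\sum_m s_m f_m(\pv,\pm)$ is not obviously nonnegative. What is actually needed is a strict inequality: if $\pm^*_{a',b'}=0$ while $\mathbf{A}_{a',b'}=1$, pass to the submatrix $\mathbf{A}'$ and show that the achievable value is bounded by a spectral quantity for $\mathbf{A}'$, which is \emph{strictly} smaller than that for $\mathbf{A}$. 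The paper obtains this strict drop via a Collatz--Wielandt bound (\cite[Theorem~5.6.1]{Lemmens2012}), and this is the ingredient your proposal is missing. Once irreducibility of $\pm^*$ is established, your final remark about ${\pv[0]}^*$ being the eigenvector of $({\pm[0]}^*)^p$ is indeed immediate.

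In summary, your decomposition of the chain is genuinely different from the paper's (you go \eqref{eq:spectral_radius_maximality}~$\leftrightarrow$~\eqref{eq:irreducible_markov_maximality} first via a pressure formula, whereas the paper establishes \eqref{eq:spectral_radius_maximality}~$=$~\eqref{eq:covering_maximality} without irreducibility and only afterwards brings in \eqref{eq:irreducible_markov_maximality_dual}--\eqref{eq:irreducible_markov_maximality}), and if the pressure formula were proved and the $r$--$s$ correspondence made precise it could be an attractive route. But as written it leaves the pressure formula unproven, the covering limit unjustified, the min-attainment unaddressed, and the irreducibility argument invalid.
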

\noindent The proof of Lemma \ref{lem:periodic_maximality} is presented in Appendix \ref{sec:periodic_maximality}.
\begin{remark} \label{rem:rho_*_invariant}
    It can be seen, for example, from the cyclic symmetry of $s$ in \eqref{eq:irreducible_markov_maximality_dual} that 
    \[
    \min_{r \in \mathcal{R}_{p,d}} \left(\sum_{\ell=0}^{p-1}\prod_{i=0}^{\ell} r_{i}^{-1}\right)^{-1} \cdot \log \rho_{\alphabet_j}(\mathcal{L}_{\incmat,r})
    \]
    is actually independent of $j$ and therefore further coincides with
    \[
    \min_{r \in \mathcal{R}_{p,d}} \left(\sum_{\ell=0}^{p-1}\prod_{i=0}^{\ell} r_{i}^{-1}\right)^{-1} \cdot \log \rho_*(\mathcal{L}_{\incmat,r}),
    \]
    where $\rho_*$ is defined in Theorem \ref{thm:Hausdorff_dim}.
\end{remark}
As a consequence of Lemma \ref{lem:periodic_maximality}, we have the following corollary.
\begin{corollary} \label{cor:optimal_measure}
    There exists a Markov measure $\mathbb{P}$ associated with an irreducible transition matrix $\transmat$ and an initial distribution $\initvec$ such that it is supported in $\tshift[][\incmat]$ and that, almost surely,
    \begin{equation} \label{eq:optimal_measure}
        \liminf_{n \to \infty} -\frac{\log \mathbb{P}([X|_{\lattice{n}}])}{\norm{\lattice{n}}} = \min_{r \in \mathcal{R}_{p,d}} \left(\sum_{\ell=0}^{p-1}\prod_{i=0}^{\ell} r_{i}^{-1}\right)^{-1} \cdot \log \rho_*(\mathcal{L}_{\incmat,r}),
    \end{equation}
    where $[u]$ denotes the cylinder set associated with the block $u$, namely, given $u \in \block{n}(\tshift[][\incmat])$,
    \begin{equation} \label{eq:cylinder_set}
        [u] := \{t \in \tshift[][\incmat]: t_g = u_g, \forall g \in \lattice{n}\}.
    \end{equation}
\end{corollary}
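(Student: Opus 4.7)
The plan is to realize a saddle point of Lemma \ref{lem:periodic_maximality} as a Markov measure on $\mathcal{T}_{\mathbf{A}}$ and then invoke the almost-sure limit theorem (Theorem \ref{thm:2}) to compute $|\lattice{n}|^{-1}\log\mathbb{P}(\mathsf{C}_n(X))$ along each arithmetic progression $\{pn'+j\}$. Concretely, by the final assertion of Lemma \ref{lem:periodic_maximality}, I would choose a maximizer $(\pv^*,\pm^*)$ of \eqref{eq:irreducible_markov_maximality} with $\pm^*$ being $1$-periodic---so $\pm[i]^*=M$ for a single irreducible stochastic matrix $M$ sharing the zero pattern of $\mathbf{A}$---and $\pv^*$ being $p$-periodic with ${\pv[0]}^*\in\mathcal{C}_0$ a right eigenvector of $M^p$. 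Declare $\mathbb{P}$ to be the Markov measure with transition matrix $M$ and initial distribution $\pi:={\pv[0]}^*$; support in $\mathcal{T}_{\mathbf{A}}$ is then automatic from the compatibility of $M$ with $\mathbf{A}$.

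Since $\log\mathbb{P}(\mathsf{C}_n(t))=\log\pi_{t_\epsilon}+\sum_{g\in\lattice{n}\setminus\{\epsilon\}}\log M_{t_g,t_{\varsigma(g)}}$ and the first term is negligible after dividing by $|\lattice{n}|$, I would apply Theorem \ref{thm:2} with $A=M$. Because $\pi$ is supported on $\alphabet_0$, the event $X_\epsilon\in\alphabet_0$ holds $\mathbb{P}$-a.s., and the theorem supplies, for each $j\in[p]$, an almost-sure limit
\[
L_j \;=\; \sum_{i=0}^{p-1}\frac{d^{-i}}{\sum_\ell d^{-\ell}}\sum_{a,b}\pi^{(i-j)}_b\,M_{a,b}\log M_{a,b}
\]
of $|\lattice{pn'+j}|^{-1}\sum_{g\in\lattice{pn'+j}\setminus\{\epsilon\}}\log M_{X_g,X_{\varsigma(g)}}$. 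Since $\mathbb{N}$ decomposes into the $p$ progressions $\{pn'+j\}_{j\in[p]}$, almost surely the one-sided extremal limits of $|\lattice{n}|^{-1}\log\mathbb{P}(\mathsf{C}_n(X))$ over all $n$ are $\min_j L_j$ and $\max_j L_j$.

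The final step is to match these limits with the minimax value of Lemma \ref{lem:periodic_maximality}. A direct residue-class expansion of the geometric series defining $f_m({\pv}^*,{\pm}^*)$, combined with the periodicities of $({\pv}^*,{\pm}^*)$, shows that $\{-L_j\}_{j\in[p]}=\{f_{m(j)}({\pv}^*,{\pm}^*)\}_{j\in[p]}$ under a cyclic bijection $m:[p]\to[p]$. Since $({\pv}^*,{\pm}^*)$ is a saddle of \eqref{eq:irreducible_markov_maximality}, any optimizer $s^*\in\Gamma_{[p]}$ of $s\mapsto\sum_m s_mf_m({\pv}^*,{\pm}^*)$ is supported on $\argmin_m f_m({\pv}^*,{\pm}^*)$, whose common value there coincides with the right-hand side of \eqref{eq:optimal_measure}. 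Restricting to the subsequence of $j$ mapped by $m$ into this $\argmin$ identifies the relevant extremum of $|\lattice{n}|^{-1}\log\mathbb{P}(\mathsf{C}_n(X))$---up to the sign arising from $\log\mathbb{P}\le 0$---with the right-hand side of \eqref{eq:optimal_measure}.

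The principal obstacle lies in the saddle-equalization step: one must verify that the saddle $({\pv}^*,{\pm}^*)$ can be chosen so that $f_m({\pv}^*,{\pm}^*)$ is constant in $m\in[p]$, and hence all $L_j$ coincide, so that no gap remains between $\liminf_n$ and $\limsup_n$ of $|\lattice{n}|^{-1}\log\mathbb{P}(\mathsf{C}_n(X))$. I expect this to follow either from the complementary slackness in the minimax of Lemma \ref{lem:periodic_maximality}---the support of $s^*$ contains the $\argmin$ but optimality allows one to spread $s^*$ across all of $[p]$ when the maximizer is suitably rechosen---or from the cyclic symmetry of the constraint $\prod_{i=0}^{p-1}r_i=1$, which permits replacing $s^*$ by $(1/p,\ldots,1/p)\in\Gamma_{[p]}$ at the saddle.
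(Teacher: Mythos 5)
Your construction of the optimal pair $(M,\pi)$ from the final assertion of Lemma \ref{lem:periodic_maximality}, followed by an application of the almost-sure convergence theorem along arithmetic progressions $\{pn'+j\}$ and the residue-class identification of the limits $-L_j$ with the values $f_m(\pv^*,\pm^*)$, is precisely the route the paper takes (the paper cites Corollary \ref{cor:1}, you cite the essentially equivalent Theorem \ref{thm:2} with $A=M$; either suffices, and your reference is if anything the more directly applicable one).

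The ``principal obstacle'' you flag at the end, however, is not a genuine obstacle. The corollary asserts only a one-sided extremal limit: after the sign normalization you already note (since $\log\mathbb{P}\le 0$, the left side of \eqref{eq:optimal_measure} should be read as $-\limsup_n |\lattice{n}|^{-1}\log\mathbb{P}(\mathsf{C}_n(X))$, equivalently $\liminf_n |\lattice{n}|^{-1}(-\log\mathbb{P}(\mathsf{C}_n(X)))$, which is what Section \ref{sec:lower_bound} actually uses), this quantity equals $\min_{j\in[p]}(-L_j)$. Your cyclic bijection $\{-L_j\}_{j\in[p]}=\{f_{m}(\pv^*,\pm^*)\}_{m\in[p]}$ then gives $\min_j(-L_j)=\min_m f_m(\pv^*,\pm^*)$ unconditionally, and the saddle in Lemma \ref{lem:periodic_maximality} identifies $\min_m f_m(\pv^*,\pm^*)$ with the right-hand side of \eqref{eq:optimal_measure} since $\min_{s\in\Gamma_{[p]}}\sum_m s_m f_m(\pv^*,\pm^*)=\min_m f_m(\pv^*,\pm^*)$. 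There is therefore nothing to equalize: the statement neither asserts nor requires a limit, and the possible gap between $\liminf_n$ and $\limsup_n$ is immaterial to it. A small point worth spelling out: Theorem \ref{thm:2} is stated conditionally on $X_\epsilon=a_0$; since $\pi$ is supported on all of $\alphabet_0$ you should observe that by irreducibility every $a\in\alphabet_0$ can play the role of $a_0$, so the unconditional almost-sure convergence follows by decomposing over $\{X_\epsilon=a\}$, $a\in\alphabet_0$.
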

\begin{proof}
    It straightforwardly follows from Lemma \ref{lem:periodic_maximality} and Theorem \ref{thm:2} by choosing $\obsfunc = \transmat$. More precisely, we may (uniquely) choose the initial distribution as in Lemma \ref{lem:periodic_maximality} so that the initial distribution of a state $a$ is positive if and only if $a \in \alphabet_0$. The proof is then concluded by Theorem \ref{thm:2}.
\end{proof}

\subsection{Lower bound when $\incmat$ is irreducible} \label{sec:lower_bound}
Suppose that $\tshift[][\incmat]$ is a tree-shift with an irreducible incidence matrix $\incmat$ and that $\mathbb{P}$ is the Markov measure in Corollary \ref{cor:optimal_measure}. The lower bound is obtained by applying the mass distribution principle: Due to \eqref{eq:optimal_measure}, for every $\delta < \min_{r \in \mathcal{R}_{p,d}} \left(\sum_{\ell=0}^{p-1}\prod_{i=0}^{\ell} r_{i}^{-1}\right)^{-1} \cdot \log \rho_*(\mathcal{L}_{\incmat,r})$
we have 
\[
\limsup_{n \to \infty} \frac{\mathbb{P}([X|_{\lattice{n}}])}{e^{-\delta\norm{\lattice{n}}}} = 0 \quad \text{almost surely}.
\]
Hence, by Egorov's theorem, there exists a subset $S \subset \tshift[][\incmat]$ with $\mathbb{P}(S) > 0$ and a constant $C$ such that
\[
\mathbb{P}([t|_{\lattice{n}}]) \le C e^{-\delta \norm{\lattice{n}}} \text{ for all } n \in \mathbb{N}, t \in S.
\]
Consequently, supposing that $\mathcal{S}$ is a cover of $S$ consisting of disjoint cylinder sets, we have
\[
\sum_{[u] \in \mathcal{S}} (\mathrm{diam}([u]))^{\delta} \ge C^{-1} \sum_{[u] \in \mathcal{S}} \mathbb{P}([u]) \ge C^{-1} \mathbb{P}(S) > 0.
\]
Therefore, the Hausdorff measure $H^{\delta}(S) \ge C^{-1} \mathbb{P}(S) > 0$ is bounded from below and $\dim_H \tshift[][\incmat] \ge \dim_H S \ge \delta$, implying $\dim_H \tshift[][\incmat] \ge \min_{s \in \mathcal{R}_{p,d}} \left(\sum_{\ell=0}^{p-1}\prod_{i=0}^{\ell} r_{i}^{-1}\right)^{-1} \cdot \log \rho_*(\mathcal{L}_{\incmat,r})$.

\subsection{Upper bound when $\incmat$ is irreducible} \label{sec:upper_bound}
The upper bound of the Hausdorff dimension is proved by constructing suitable covers. As will be seen shortly, the study of the operator $\mathcal{L}_{\incmat,r}$ is motivated by the following na\"ive covering strategy for the space $\tshift[][\incmat]$. For convenience, we denote the collection of all $k$-cylinder sets by $\mathcal{C}_k := \{[t|_{\lattice{n}}]: t \in \tshift[][\incmat]\}$.

Suppose $n + N > n \gg 0$ ($N \in \mathbb{N}$). We define a cover $\mathcal{S}_{n,N}$ agreeing with the following philosophy:
\begin{enumerate}[(a)]
    \item The cover $\mathcal{S}_{n,N}$ consists of cylinder sets in $\cup_{k=n}^{n+N} \mathcal{C}_{k}$.
    \item A cylinder set $[t|_{\lattice{k}}]$ is contained in $\mathcal{S}_{n,N}$ if
    \begin{equation}
        \begin{aligned}
            \frac{\log \norm{\block{k}(\tau(t),\eta(t))}}{\norm{\lattice{k}}} = \min_{n \le m \le n+N} \frac{\log \norm{\block{m}(\tau(t),\eta(t))}}{\norm{\lattice{m}}}.
        \end{aligned}
    \end{equation}
\end{enumerate}
Condition (a) is based on the belief that the Hausdorff dimension is approached by the intermediate dimension (see \cite{banaji2021intermediate}) and condition (b) is imposed in the hope that the upper bound of the Hausdorff dimension is minimized. Precisely, if we denote 
\begin{equation} \label{eq:block_minimax}
    \alpha_{n,N} := \max_{(\tv,\tm) \in \proddom*{n+N}} \min_{n \le m \le n+N} \frac{\log \norm{\block{m}(\tv,\tm)}}{\norm{\lattice{m}}},
\end{equation}
and assume $\alpha > \limsup_{N \to \infty} \limsup_{n \to \infty} \alpha_{n,N}$, then the cover $\mathcal{S}_{n,N}$ gives the following upper bound for the $\alpha$-Hausdorff measure of $\tshift[][\incmat]$:
\begin{align*}
    H^s(\tshift[][\incmat]) & \le \limsup_{N \to \infty} \limsup_{n \to \infty} \sum_{k=n}^{n+N} \sum_{[u] \in \mathcal{C}_k \cap \mathcal{S}_{n,N}} (\mathrm{diam}([u]))^{\alpha} \\
    & = \limsup_{N \to \infty} \limsup_{n \to \infty} \sum_{k=n}^{n+N} \sum_{(\tv,\tm) \in \proddom*{n+N}}\sum_{\substack{[u] \in \mathcal{C}_k \cap \mathcal{S}_{n,N}:\\u \in \block{k}(\tv[0:k],\tm[0:k-1])}} e^{-\alpha \norm{\lattice{k}}} \\
    & \le \limsup_{N \to \infty} \limsup_{n \to \infty} \sum_{k=n}^{n+N} \norm{\proddom*{n+N}} e^{-\left(\alpha-\alpha_{n,N}\right) \norm{\lattice{k}}}.
\end{align*}
Now that Lemma \ref{lem:dist_set_growth_rate} asserts
\[
\norm{\proddom*{n+N}} \le \norm{\dvset{n+N}} \norm{\trmset{n+N}} \le \left(\frac{\norm{\lattice{n+N}}}{n+N+1} + 1\right)^{3 (n+N+1) \cdot \norm{\alphabet}^2},
\]
we deduce that $\tshift[][\incmat]$ is $s$-Hausdorff null:
\begin{align*}
    H^s(\tshift[][\incmat]) &\le \limsup_{N \to \infty} \limsup_{n \to \infty} \sum_{k=n}^{n+N} e^{-\left(\alpha-\alpha_{n,N}-O\left(\frac{\log \norm{\lattice{n+N}}}{\norm{\lattice{n}}}\right)\right) \norm{\lattice{k}}} = 0,
\end{align*}
which implies $\dim_H \tshift[][\incmat] \le \alpha$.

In practice, obtaining an asymptotic estimate for $\alpha_{n,N}$ is facilitated by the following simplification. For each $n,N \in \mathbb{N}$, decompose $\proddom*{n+N}$ into subsets 
\[
\proddom*{n+N}^{(j)}:=\{(\tv,\tm) \in \proddom*{n+N}: \tv[0] \in \mathcal{C}_{j-n-N}\} \text{ for } j \in [p].
\]
Since it is a finite partition, we may assume, by a proper choice of $a_0$ in assumption \eqref{eq:tree_assumption}, that the maximum of $\alpha_{n,N}$ is attained in $\proddom*{n+N}^{(0)}$ for infinitely many $n, N \in \mathbb{N}$, resulting in
\[
    \limsup_{N \to \infty} \limsup_{n \to \infty} \alpha_{n,N} = \limsup_{N \to \infty} \limsup_{n \to \infty} \max_{(\tv,\tm) \in \proddom*{n+N}^{(0)}} \min_{n \le m \le n+N} \frac{\log \norm{\block{m}(\tshift[][\incmat];\tv,\tm)}}{\norm{\lattice{m}}}.
\]
Utilizing Lemma \ref{lem:dense_pattern}, we deduce that for each $n, N \in \mathbb{N}$,
\begin{align*}
    & \max_{(\tv,\tm) \in \proddom*{n+N}^{(0)}} \min_{n \le m \le n+N} \frac{\norm{\block{m}(\tshift[][\incmat];\tv,\tm)}}{\norm{\lattice{m}}} \le \min_{n \le m \le n+N} \max_{(\tv,\tm) \in \proddom*{n+N}^{(0)}} \frac{\norm{\block{m}(\tshift[][\incmat];\tv,\tm)}}{\norm{\lattice{m}}} \\
    \le & \min_{s \in \Gamma_{[N]}} \left[\max_{(\pv,\pm) \in Z_0} \sum_{m=0}^{N} s_m \mathcal{F}_{m:n+N}(\pv,\pm)\right] + o_n(1),
\end{align*}
where $o_n(1)$, according to Lemmas \ref{lem:dist_set_growth_rate} and \ref{lem:uniform_equiconvergence}, is a number independent of $N$ that vanishes as $n \to \infty$. Consequently, by Lemma \ref{lem:periodic_maximality} and Remark \ref{rem:rho_*_invariant}, 
\begin{align*}
    \limsup_{N \to \infty} \limsup_{n \to \infty} \alpha_{n,N} \le & \limsup_{N \to \infty} \min_{s \in \Gamma_{[N]}} \max_{(\pv,\pm) \in Z_0} \sum_{m=0}^{N} s_m \mathcal{F}_{m:\infty}(\pv,\pm) \\
    \le & \limsup_{N \to \infty} \min_{s \in \Gamma_{[p]}} \max_{(\pv,\pm) \in Z_0} \sum_{m=0}^{p -1} s_m \sum_{n=0}^{pN-1} t^*_{n,N} \mathcal{F}_{n+m:\infty}(\pv,\pm) \\
    = & \min_{r \in \mathcal{R}_{p,d}} \left(\sum_{\ell=0}^{p-1}\prod_{i=0}^{\ell} r_{i}^{-1}\right)^{-1} \cdot \log \rho_*(\mathcal{L}_{\incmat,s}),
\end{align*}
implying that the upper bound coincides with the lower bound.
\subsection{Upper bound for general $\tshift[][\incmat]$} \label{sec:general_upper_bound}
In this subsection, we derive an upper bound for $\tshift[][\incmat]$ without assumption \eqref{eq:tree_assumption}. 
\begin{proof}[Proof of Corollary \ref{cor:upper_bound_spectral_radius}]
    Let $t^*_{n,N}$ be defined as in \eqref{eq:t^*_def} with $p = 1$. Let $\alpha_{n,N}$ be as in \eqref{eq:block_minimax} so that, as shown in Section \ref{sec:upper_bound}, $\dim_H \tshift[][\incmat] \le \limsup_{N \to \infty} \limsup_{n \to \infty} \alpha_{n,N}$ and
    \begin{align*}
        & \limsup_{N \to \infty} \limsup_{n \to \infty} \alpha_{n,N} \le \limsup_{N \to \infty} \min_{s \in \Gamma_{[N]}} \left[\max_{(\pv,\pm) \in Z} \sum_{m=0}^{N} s_m \mathcal{F}_{m:\infty}(\pv,\pm)\right]\\
        \le & \limsup_{N \to \infty} \max_{(\pv,\pm) \in Z} \sum_{m=0}^{N} t^*_{m,N} \mathcal{F}_{m:\infty}(\pv,\pm).
    \end{align*}
    Similar to Lemma \ref{lem:asymptotic_property}, the last expression above admits a rearranged form as \eqref{eq:rearranged_form}, which can be solved similarly to \eqref{eq:finite_maximizer} (with $q_{i,N} = (N+\frac{1}{d-1})$ for $0 \le i < N$). Explicitly, this yields
    \begin{align*}
        &\limsup_{N \to \infty} \max_{(\pv,\pm) \in Z} \sum_{m=0}^{N} t^*_{m,N} \mathcal{F}_{m:\infty}(\pv,\pm) \le \limsup_{N \to \infty} \frac{1}{N+\frac{1}{d-1}} \log \left( \max_{a \in \alphabet} (\incmat^N \onevec)_a\right) = \log \rho(\incmat),
    \end{align*}
    proving the proposed inequality. When $\incmat$ is irreducible, it follows from \cite[Theorem 2.1]{Ban2020b} that the entropy satisfies that 
    \begin{equation} \label{eq:dim_comparison}
    \underline{\dim}_{B} \tshift[][\incmat] = h_{top}(\tshift[][\incmat]) \ge \log \rho(\incmat) \ge \dim_{H} \tshift[][\incmat],
    \end{equation}
    in which the first equality holds if and only if $\sum_{b \in \alphabet} \incmat_{a,b}$ is identical for every $a \in \alphabet$. Under the circumstances, one can simply calculate $h_{top}(\tshift[][\incmat]) = \log \rho(\incmat)$ with $\rho(\incmat) = \sum_{b \in \alphabet} \incmat_{a,b}$ for every $a \in \alphabet$ and guess the spectral radius of the transfer operator $\mathcal{L}_{\incmat,r}$ to be $\rho(\incmat)^{\sum_{\ell=0}^{p-1} \prod_{i=0}^{\ell} r_{i}^{-1}}$, the eigenvalue associated with every eigenvectors $\sum_{a \in \alphabet_j} \stdvec{a} \in \mathcal{C}_j$, $j \in [p]$. Plugging it into the proposed formula \eqref{eq:Hausdorff_dimension} yields that the equalities in \eqref{eq:dim_comparison} hold simultaneously.
\end{proof}

Here we provide an example to illustrate our formula.
\begin{example}
    Let $\tshift[][\incmat]$ be a Markov hom tree-shift over the $3$-tree associated with the incidence matrix $\incmat$, where
    \[
    \incmat = \begin{bmatrix}
    \begin{array}{ccc|ccc|ccc}
          &   &   &   &   &   & 0 & 1 & 1 \\
          & 0 &   &   & 0 &   & 1 & 0 & 0 \\
          &   &   &   &   &   & 1 & 1 & 0 \\
        \hline
        0 & 1 & 0 &   &   &   &   &   &   \\
        1 & 0 & 0 &   & 0 &   &   & 0 &   \\
        1 & 0 & 1 &   &   &   &   &   &   \\
        \hline
          &   &   & 0 & 0 & 1 &   &   &   \\
          & 0 &   & 1 & 0 & 0 &   & 0 &   \\
          &   &   & 1 & 1 & 0 &   &   &  
    \end{array}
    \end{bmatrix}.
    \]
    We compute the Hausdorff dimension numerically by determining the eigenvalues and eigenvectors of the corresponding transfer operator, as prescribed by Proposition \ref{prop:eigenspace}. This is achieved through an iterative process, and the logarithm of the eigenvalue is plotted in Figure \ref{fig:example_eigval}. Numerical results suggest that the optimal scaling vector $s$ is approximately $(0.312, 0.588, 0.010)$ with $\log \rho_*(\mathcal{L}_{\incmat,r(s^*)}) \approx 0.3027$, which is strictly less than the logarithm of the spectral radius of $\log \rho(\incmat^T) \approx 0.3208$ and is consistent with Corollary \ref{cor:upper_bound_spectral_radius}.
    \begin{figure}
        \centering
        \includegraphics[width=0.4\textheight]{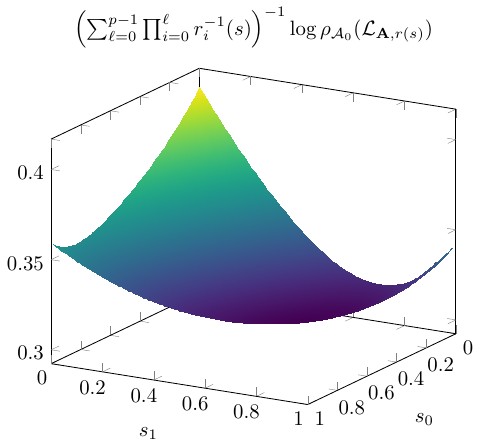}
        \caption{spectral radius $\left(\sum_{\ell=0}^{p-1}\prod_{i=0}^{\ell} r_{i}^{-1}(s)\right)^{-1} \log \rho_{\alphabet_0}(\mathcal{L}_{\incmat,r(s)})$}
        \label{fig:example_eigval}
    \end{figure}
\end{example}

\section{Discussion} \label{sec:discussion}
Inspired by the classical large deviation principle for Markov chains, this article analogously establishes the large deviation principle for irreducible Markov chains indexed by rooted $d$-trees. Building on the belief of its potential for providing a better conception of the tree-shifts as well as further applications, the expositions in this paper showcase the capability of the method of types argument, which not only provides a theorem regarding almost sure convergence but also aids the determination of Hausdorff dimension of the set Markov hom tree-shift. Despite these accomplishments, our work is not intended to be an in-depth investigation but rather a starting point for further exploration. Indeed, several questions still remain unanswered in this work, which we list as follows:
\begin{itemize}
    \item Is the rate function $\Lambda^*_j(\alpha)$ in Theorem \ref{thm:1} continuously differentiable if $\incmat$ is irreducible?
    \item If $\incmat$ is irreducible, can the optimal $\mathcal{L}_{\incmat,r}$ in Theorem \ref{thm:2} be determined? Can the optimal Markov measure in Lemma \ref{lem:periodic_maximality} be determined?
    \item What is the Hausdorff dimension for general $\tshift[][\incmat]$ without the assumption of $\incmat$ being irreducible?
\end{itemize}
We hope this paper will inspire further research in this direction, advancing our understanding of tree-indexed Markov chains and tree-shifts.

\appendix
\section{Proofs of auxiliary lemmas}
\subsection{Proof of Lemma \ref{lem:periodic_duality}} \label{sec:periodic_duality}
    Due to the continuity of the objective function, it suffices to show the equality with $\inf_{s \in \mathring{\Gamma}_{[p]}}$ in place of all appearances of $\min_{s \in \Gamma_{[p]}}$ in \eqref{eq:periodic_duality}. Under the circumstances, we first rephrase, by virtue of periodicity, the left-hand side of \eqref{eq:periodic_duality} as 
    \begin{equation} \label{eq:periodic_duality_2}
        \begin{aligned}
            \max_{\substack{\pm \in \alphsm^p \\ \pv[p] \in \mathcal{C}_0}} \inf_{\substack{s \in \mathring{\Gamma}_{[p]} \\ \mu \in \mathbb{R}^{\alphabet}}} & \left[-\sum_{i=0}^{p-1} \frac{\sum\limits_{m=1}^{p} s_{m+i} d^{m}}{\sum\limits_{m=1}^{p} d^{m}} \pv[p] \left(\prod_{\ell=1}^{p-i-1} \pm[p-\ell]\right) \DKL(\pm[i] || \incmat)\right. \\
            & \quad + \left.\vphantom{\frac{\sum\limits_{m=1}^{p} s_{m+i} d^{m}}{\sum\limits_{m=1}^{p} d^{m}}} \left(\pv[p] \prod_{\ell=1}^{p-1} \pm[p-\ell]  - \pv[p]\right) \mu\right],
        \end{aligned}
    \end{equation} 
    where $s_{p+i}$ are simply aliases of $s_{i}$ for $i=0,\cdots,p-1$, respectively. For brevity, we denote by $F(\pv[p],\pm,s,\mu)$ the objective function above.
    
    As always, the expression on the left of \eqref{eq:periodic_duality} is by definition no larger than the right, so it suffices to show the other inequality. The proof essentially exploits the following sequence of functions $(\lambda^{(i)}(s,\mu))_{i=0}^{p-1}$, whose convexity is guaranteed in Lemma \ref{lem:convexity_backbone}. We let $q_i(s)=(\sum_{m=1}^{p} d^{m})^{-1}(\sum_{m=1}^{p} s_{m+i} d^{m})$, and define
    \[
        \lambda^{(i+1)}(s,\mu) = \begin{cases}
        \mu & \text{if } i=0; \\
            q_i(s) \log (\incmat e^{q_i(s)^{-1} \lambda^{(i)}(s,\mu)}) & \text{if } i=1,2,\cdots,p-1.
        \end{cases}
    \]
    One can now apply the minimax theorem to $\pm[0]$ and $(s,\mu)$, since the objective function is concave in the former variable and convex in the latter. Swapping $\max_{\pm[0]}$ and $\inf_{(s,\mu)}$ turns \eqref{eq:periodic_duality_2} into
    \[
        \begin{aligned}
            \max_{\substack{\pm[1:p-1] \in \alphsm^p \\ \pv[p] \in \mathcal{C}_0}} \inf_{\substack{s \in \mathring{\Gamma}_{[p]} \\ \mu \in \mathbb{R}^{\alphabet}}} & \left[-\sum_{i=1}^{p-1} \frac{\sum\limits_{m=1}^{p} s_{m+i} d^{m}}{\sum\limits_{m=1}^{p} d^{m}} \pv[p] \left(\prod_{\ell=1}^{p-i-1} \pm[p-\ell]\right) \DKL(\pm[i] || \incmat)\right. \\
            & \quad \left.\vphantom{\frac{\sum\limits_{m=1}^{p} s_{m+i} d^{m}}{\sum\limits_{m=1}^{p} d^{m}}}+ \pv[p] \left(\prod_{\ell=1}^{p-1} \pm[p-\ell]\right) \lambda^{(1)}(s,\mu) - \pv[p] \mu \right],
        \end{aligned}
    \]
    Recursively using the convexity of $\lambda^{(i)}$ and applying the minimax theorem, one will move all $\max$ in \eqref{eq:periodic_duality_2} to the right of $\inf$ while retaining equality. In particular, 
    \begin{equation*}
        \begin{aligned}
            & \max_{\substack{\pm \in \alphsm^p \\ \pv[p] \in \mathcal{C}_0}} \inf_{\substack{s \in \mathring{\Gamma}_{[p]} \\ \mu \in \mathbb{R}^{\alphabet}}} F(\pv[p],\pm,s,\mu) = \inf_{\substack{s \in \mathring{\Gamma}_{[p]} \\ \mu \in \mathbb{R}^{\alphabet}}} \max_{\substack{\pm \in \alphsm^p \\ \pv[p] \in \mathcal{C}_0}} F(\pv[p],\pm,s,\mu) \\
            \ge & \inf_{\substack{s \in \mathring{\Gamma}_{[p]} \\ \mu \in \mathbb{R}^{\alphabet}}} \max_{\substack{\pm \in \alphsm^p \\ \pv[p] \in \mathcal{C}_0 \\ \pv[p] = \pv[p] \prod_{\ell=1}^{p-1} \pm[p-\ell]}} F(\pv[p],\pm,s,\mu) \\
            =& \inf_{s \in \mathring{\Gamma}_{[p]}} \max_{\substack{\pm \in \alphsm^p \\ \pv[p] \in \mathcal{C}_0 \\ \pv[p] = \pv[p] \prod_{\ell=1}^{p-1} \pm[p-\ell]}} F(\pv[p],\pm,s,0) = \min_{s \in \Gamma_{[p]}} \max_{\substack{(\pv,\pm) \in Z_0: \\ p\text{-periodic}}} \sum_{m=0}^{p -1} s_m \mathcal{F}_{m}(\pv,\pm).,
        \end{aligned}
    \end{equation*} 
    which agrees with the right-hand side of \eqref{eq:periodic_duality} (with $\inf_{s \in \mathring{\Gamma}_{[p]}}$ in place of $\min_{s \in \Gamma_{[p]}}$.) The lemma is hence proved.
\subsection{Proof of Lemma \ref{lem:periodic_maximality}} \label{sec:periodic_maximality} 
For the sake of simplicity, we denote by $F_N(\pv,\pm,s)$ the objective function of \eqref{eq:covering_maximality} throughout the subsection.
\subsubsection{Auxiliary lemmas}
We first prove the following two auxiliary lemmas. 
\begin{lemma} \label{lem:bijection}
    Suppose $s \in \Gamma_{[p]}$,
    \begin{equation*}
        q_{i}(s) = \sum\limits_{j=0}^{p-1}  \frac{s_{i-j}}{d^{j}} \frac{d^p-d^{p-1}}{d^p-1} \text{ and } r(s)=\left(\frac{q_0(s)}{q_1(s)},\frac{q_1(s)}{q_2(s)},\cdots,\frac{q_{p-1}(s)}{q_0(s)}\right).
    \end{equation*}
    Then, $s \mapsto q(s)$ is a bijection between $\Gamma_{[p]}$ and $\{q \in \Gamma_{[p]}:q_0 \le d q_1 \le \cdots
    \le d^{p-1} q_{p-1} \le d^p q_0\}$, and $q(s) \mapsto r(s)$ is a bijection between $\{q \in \Gamma_{[p]}:q_0 \le d q_1 \le \cdots
    \le d^{p-1} q_{p-1} \le d^p q_0\}$ and $\mathcal{R}_{p,d}$.
\end{lemma}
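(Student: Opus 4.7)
The plan is to establish both bijections by producing explicit inverses; the whole lemma reduces to a change-of-variables computation keyed to one telescoping identity.

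For the first map $s \mapsto q(s)$, the central observation I would verify is the cyclic telescoping relation
\begin{equation*}
q_i(s) - \frac{1}{d}\, q_{i-1}(s) = \frac{d-1}{d}\, s_i \qquad (i \in [p],\ \text{indices mod } p),
\end{equation*}
which follows from writing out $q_i$ and $q_{i-1}/d$ as sums over $j$, cancelling the overlapping terms, and using periodicity $s_{i-p}=s_i$ together with the one-line calculation $C \cdot (1 - d^{-p}) = (d-1)/d$, where $C = (d^p - d^{p-1})/(d^p - 1)$. This identity gives the candidate inverse $s_i = (d q_i - q_{i-1})/(d-1)$. I would then verify two things: first, that nonnegativity of the inverse coordinates is equivalent to $q_{i-1} \le d q_i$ for every $i \in [p]$, which is precisely the cyclic chain $q_0 \le d q_1 \le d^2 q_2 \le \cdots \le d^{p-1} q_{p-1} \le d^p q_0$; second, that the normalisations match, since the telescope gives $\sum_i (d q_i - q_{i-1})/(d-1) = \sum_i q_i$, and conversely $\sum_i q_i(s) = \sum_i s_i$ because $C \sum_{j=0}^{p-1} d^{-j} = 1$.

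For the second map $q \mapsto r(s)$, I would first observe that on the target subset of $\Gamma_{[p]}$ every coordinate $q_i$ is strictly positive: the cyclic chain forces any vanishing $q_j$ to propagate around the loop to all coordinates, contradicting $\sum_i q_i = 1$. Hence each ratio $r_i = q_i/q_{i+1}$ is well-defined and positive; the inequalities $q_{i-1} \le d q_i$ translate into $r_{i-1} \le d$, and $\prod_i r_i = 1$ is automatic by telescoping, so $r(s) \in \mathcal{R}_{p,d}$. For the inverse direction, given any $r \in \mathcal{R}_{p,d}$ I would set
\begin{equation*}
q_i = Z^{-1} \prod_{j=0}^{i-1} r_j^{-1}, \qquad Z = \sum_{\ell=0}^{p-1} \prod_{j=0}^{\ell-1} r_j^{-1},
\end{equation*}
noting that $\prod_i r_i = 1$ guarantees consistency as the cycle closes, and $r_i \le d$ recovers the cyclic chain on $q$.

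I do not expect a serious obstacle; the whole argument is bookkeeping of modular indices, and the constant $C$ has evidently been tuned so that the telescoping cleans up to exactly $(d-1)/d$. The only mild delicacy is handling boundary vectors where some inequality in the cyclic chain is tight and the corresponding $s_i$ vanishes, but this is transparent from the explicit inverse formulas, which remain valid on the closure.
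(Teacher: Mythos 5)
Your proof is correct and follows essentially the same strategy as the paper: both bijections are established by producing explicit inverse formulas. You supply a useful detail the paper leaves implicit, namely the telescoping identity $q_i(s) - d^{-1} q_{i-1}(s) = \frac{d-1}{d}\, s_i$, which makes the inverse $s_i = (d q_i - q_{i-1})/(d-1)$ and the equivalence of nonnegativity with the cyclic chain of inequalities transparent, whereas the paper merely asserts that $s \mapsto q(s)$ is an injective linear map hitting the target set.
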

\begin{proof}
    We note that $q_i$ is a bijection between $\Gamma_{[p]}$ and $\{q \in \Gamma_{[p]}:q_0 \le d q_1 \le \cdots
    \le d^{p-1} q_{p-1} \le d^p q_0\}$. Indeed, $s \mapsto q_i(s)$ is an invertible linear transformation whose inverse can be found by Gaussian elimination, and every element in the latter set admits a preimage in the former. It remains to show that $q(s) \mapsto r(s)$ maps $\{q \in \Gamma_{[p]}:q_0 \le d q_1 \le \cdots
    \le d^{p-1} q_{p-1} \le d^p q_0\}$ bijectively into $\mathcal{R}_{p,d}$. It is not hard to see that such a map is well defined and $\mathcal{R}_{p,d}$ is its image. The injectivity follows from the constraint that $\sum_{i=0}^{p-1} q_{i}(s)=1$, while bijectivity follows by noting that its inverse can be explicitly calculated as $q_0(s) = (\sum_{i=0}^{p-1} \prod_{\ell=0}^{i} r_{\ell}^{-1}(s))^{-1}$ and $q_i(s) = \prod_{\ell=0}^{i-1} r_{\ell}^{-1}(s) q_0(s)$ for $1 \le i < p$.
\end{proof}
\begin{lemma} \label{lem:Lipschitz_error}
    For every $s,s' \in \Gamma_{[p]}$ and every $N \in \mathbb{N}$ 
    \begin{align} \label{eq:independence_identity}
        \left|\max_{(\pv,\pm) \in Z_0} F_{N}(\pv,\pm,s) - \max_{(\pv,\pm) \in Z_0} F_{N}(\pv,\pm,s')\right| \le 2 \cdot \log \norm{\alphabet} \cdot \mathsf{d}_{v}(s,s').
    \end{align}
\end{lemma}
\begin{proof}
It is a consequence of the uniform boundedness $0 \le \mathcal{F}_{n:\infty}(\pv,\pm) \le \log \norm{\alphabet}$.
\end{proof}

\subsubsection{Proof setup} Denote $s_{p+i}=s_{i}$ for $i \in \mathbb{Z}_+$ and set
\[
q_{i,N}(s) := \begin{cases}
    \sum\limits_{j=0}^{i} \frac{s_{i-j}}{d^{j}} \left(N +\frac{1}{d^p-1}\right)^{-1} \frac{d^p-d^{p-1}}{d^p-1} & \text{if } 0 \le i < p; \\
    \sum\limits_{j=0}^{p-1}  \frac{s_{i-j}}{d^{j}} \left(N +\frac{1}{d^p-1}\right)^{-1} \frac{d^p-d^{p-1}}{d^p-1} & \text{if } p \le i < p N; \\
    d^{-(i-pN+1)} q_{i,{pN-1}}(s) & \text{if } p N \le i.
\end{cases}
\]
Multiplying by $(N +\frac{1}{d^p-1})$ and taking the limit $N \to \infty$, we obtain a sequence $(\overline{q}_{i}(s))_{i \in \mathbb{N}}$:
\[
\overline{q}_{i}(s) = \lim_{N \to \infty} (N +\frac{1}{d^p-1}) \cdot q_{i,N}(s),
\]
from which we define a $p$-tuple $r(s)$ that is previously studied in Lemma \ref{lem:bijection}:
\begin{equation} \label{eq:ratio_form}
    r(s)=\left(\frac{\overline{q}_{p}(s)}{\overline{q}_{p+1}(s)},\frac{\overline{q}_{p+1}(s)}{\overline{q}_{p+2}(s)},\cdots,\frac{\overline{q}_{2p-1}(s)}{\overline{q}_{p}(s)}\right).
\end{equation}

Our discussions rely heavily on the following alternative expression of the objective function $F_N$:
\begin{equation} \label{eq:rearranged_form}
    F_N(\pv,\pm,s) = -\sum_{i=0}^{\infty} q_{i,N}(s) \pv[i+1] \DKL(\pm[i] || \incmat).
\end{equation}
Based on this expression, we consider the following finite approximation:
\[
F_{N,m}(\pv,\pm,s) := -\sum_{i=0}^{m-1} q_{i,N}(s) \pv[i+1] \DKL(\pm[i] || \incmat),
\]
which, by Lemma \ref{lem:Lipschitz_error}, admits the following error bound.
\begin{equation} \label{eq:finite_approximation}
    \left|\max_{(\pv,\pm) \in Z_0} F_{N}(\pv,\pm,s) - \max_{(\pv,\pm) \in Z_0} F_{N,pN}(\pv,\pm,s)\right| \le \frac{(d-1) q_{pN-1,N}(s)}{d} \log \norm{\alphabet}.
\end{equation}
Similar to Lemma \ref{lem:asymptotic_property} (or, originally, \cite[Proposition 3.3]{ban2023topological}), the maximum of $F_{N,pN}(\pv,\pm,s)$ can be determined as
\begin{equation} \label{eq:finite_maximizer}
    \max_{(\pv,\pm) \in Z_0} F_{N,pN}(\pv,\pm,s) = \max_{\pv[pN]=\stdvec{a}: a \in \alphabet_0} \pv[pN] \lambda^{(pN),N},
\end{equation}
where
\begin{align*}
    \lambda^{(0),N}(s) = 0 \text{ and } \lambda^{(i+1),N} = \begin{cases}
        q_{i,N}(s) \log \left(\incmat e^{\frac{\lambda^{(i),N}(s)}{q_{i,N}(s)}}\right) & \text{if } q_{i,N}(s) \ne 0, \\
        0 & \text{otherwise}.
    \end{cases}
\end{align*}
Notably, for $0 \le i < p N$, the above can be expressed, by taking $\nu^{(i)}(s) = e^{\frac{\lambda^{(i),N}(s)}{q_{i,N}(s)}}$, as
\begin{equation} \label{eq:iteration}
    \nu^{(i)}(s) = \onevec \text{ and } \nu^{(i+1)}(s) = \begin{cases}
        \Psi_{\incmat,\frac{\overline{q}_{i}(s)}{\overline{q}_{i+1}(s)}}\left(\nu^{(i)}(s)\right) & \text{if } q_{i,N}(0) \ne 0, \\
        \onevec & \text{otherwise},
    \end{cases}  
\end{equation}
which is independent of $N$. In particular, this implies that the iteration satisfies
\[
\nu^{(p+i)}(s) = \Psi_{\incmat,r_i(s)}\left(\nu^{(p+i-1)}(s)\right) \quad \text{for } i=0,\cdots, p(N-1)-1,
\]
which in turn yields
\begin{equation} \label{eq:transfer_operator_iteration}
    \nu^{(pN)}(s) = \mathcal{L}_{\incmat,r^*}^{N-1}(\nu^{(p)}(s)).
\end{equation}

\subsubsection{Proof of the lemma}
\begin{proof}[Proof of Lemma \ref{lem:periodic_maximality}]
    We first prove \eqref{eq:covering_maximality}$\le$\eqref{eq:spectral_radius_maximality}. For every $\varepsilon > 0$, choose $r^* \in \mathcal{R}_{p,d}$ such that
    \[
    \left(\sum_{\ell=0}^{p-1}\prod_{i=0}^{\ell} {r^*_{i}}^{-1}\right) \log \rho_{\alphabet_0}(\mathcal{L}_{\incmat,r^*}) <  \inf_{r \in \mathcal{R}_{p,d}}\left(\sum_{\ell=0}^{p-1}\prod_{i=0}^{\ell} {r_{i}}^{-1}\right) \log \rho_{\alphabet_0}(\mathcal{L}_{\incmat,r}) + \varepsilon,
    \]
    so that, by Lemma \ref{lem:bijection}, we can find some $s^* \in \Gamma_{[p]}$ satisfying $r^* = r(s^*)$. Combining \eqref{eq:finite_approximation}, \eqref{eq:iteration}, \eqref{eq:transfer_operator_iteration}, we have
    \begin{equation} \label{eq:spectral_radius_ge}
        \begin{aligned}
            &~\limsup_{N \to \infty} \max_{(\pv,\pm) \in Z_0} F_{N}(\pv,\pm,s^*) = \limsup_{N \to \infty} 
            \max_{\pv[pN]=\stdvec{a}: a \in \alphabet_0} \pv[pN] \lambda^{(pN),N}(s^*)_a \\
            = &~ \limsup_{N \to \infty} \frac{\overline{q}_{p}(s)}{N+\frac{1}{d^p-1}} \log \left( \max_{a \in \alphabet_0} \mathcal{L}_{\incmat,r^*}^{N-1}(\nu^{(p)}(s))\right),
        \end{aligned}
    \end{equation}
    which, by Proposition \ref{prop:eigenspace} (2), is bounded from above by
    \begin{align*}
        &~ \overline{q}_{p}(s^*) \log \rho_{\alphabet_0}(\mathcal{L}_{\incmat,r^*}) = \left(\sum_{\ell=0}^{p-1}\prod_{i=0}^{\ell} {r^*_{i}}^{-1}\right)^{-1} \log \rho_{\alphabet_0}(\mathcal{L}_{\incmat,r^*}) \\
        \le &~ \inf_{r \in \mathcal{R}_{p,d}}\left(\sum_{\ell=0}^{p-1}\prod_{i=0}^{\ell} {r_{i}}^{-1}\right)^{-1} \log \rho_{\alphabet_0}(\mathcal{L}_{\incmat,r}) + \varepsilon,
    \end{align*}
    where the equality in the first line results from the following:
    \[
    \sum_{i=p}^{2p-1} \overline{q}_i(s^*)=1 \text{ and } \overline{q}_{p+i} = {r^*_{i-1}}^{-1} \cdots {r^*_1}^{-1} {r^*_0}^{-1} \overline{q}_{p}. 
    \]
    
    To prove \eqref{eq:covering_maximality}$\ge$\eqref{eq:spectral_radius_maximality}, we note that one can, by compactness of $Z_0 \times \Gamma_{[p]}$ and continuity of $F_N$, always find a sequence $s(N)$ such that
    \[
    \max_{(\pv,\pm) \in Z_0} F_{N}(\pv,\pm,s(N)) = \min_{s \in \Gamma_{[p]}} \max_{(\pv,\pm) \in Z_0} F_{N}(\pv,\pm,s),
    \]
    which, as a consequence of Lemma \ref{lem:Lipschitz_error}, further admits a convergent subsequence $s(N_i) \in \Gamma_{[p]}$ whose limit $s^*$ satisfies
    \begin{align} \label{eq:independence_identity_limit}
        \liminf_{N \to \infty}\min_{s \in \Gamma_{[p]}} \max_{(\pv,\pm) \in Z_0} F_{N}(\pv,\pm,s) = \lim_{i \to \infty} \max_{(\pv,\pm) \in Z_0} F_{N_i}(\pv,\pm,s^*).
    \end{align}
    We henceforth fix $r^* = r(s^*)$, $\lambda^{(i),j} = \lambda^{(i),j}(s^*)$, and $\nu^{(i)} = \nu^{(i)}(s^*)$ as given in \eqref{eq:ratio_form}, \eqref{eq:finite_maximizer}, and \eqref{eq:iteration}, respectively.
    Recall that, as argued in Proposition \ref{prop:eigenspace} (2), $(\mathcal{L}_{\incmat,r^*}(x)_a)_{a \in \alphabet_0}$ depends only on $(x_a)_{a \in \alphabet_0}$. Therefore, by choosing eigenvector $v^{(0)}$ satisfying $(v^{(0)}_a)_{a \in \alphabet_0} \le (\nu^{(p)}_a)_{a \in \alphabet 0}$, we derive the following by the order-preserving property of $\mathcal{L}_{\incmat,r}$:
    \begin{align*} \label{eq:finite_maximizer_A+}
        &\lim_{i \to \infty} \max_{(\pv,\pm) \in Z_0} F_{N_i,pN_i}(\pv,\pm,s^*) = \lim_{i \to \infty} \frac{\overline{q}_{p}(s^*)}{N+\frac{1}{d^p-1}} \log \left( \max_{a \in \alphabet_0} \mathcal{L}_{\incmat,r^*}^{N-1}(\nu^{(p)})\right) \\
        \ge &~ \lim_{i \to \infty} \frac{\overline{q}_{p}(s)}{N+\frac{1}{d^p-1}} \log \left( \max_{a \in \alphabet_0} \mathcal{L}_{\incmat,r^*}^{N-1}(v^{(0)})\right) = \left(\sum_{\ell=0}^{p-1}\prod_{i=0}^{\ell} {r^*_{i}}^{-1}\right)^{-1} \log \rho_{\alphabet_j}(\mathcal{L}_{\incmat,r^*}). \\
    \end{align*}
    Particularly, the argument here also justifies that ``$\min$'' in \eqref{eq:spectral_radius_maximality} is attained: For the very same $r^*$, we see from the part \eqref{eq:covering_maximality}$\le$\eqref{eq:spectral_radius_maximality} that for any $s \in \Gamma_{[p]}$,
    \begin{align*}
        &\left(\sum_{\ell=0}^{p-1}\prod_{i=0}^{\ell} {r^*_{i}}^{-1}\right)^{-1} \log \rho_{\alphabet_0}(\mathcal{L}_{\incmat,r^*}) \le \lim_{i \to \infty} \min_{s' \in \Gamma_{[p]}} \max_{(\pv,\pm) \in Z_0} F_{N_i}(\pv,\pm,s') \\
        \le& \limsup_{N \to \infty} \max_{(\pv,\pm) \in Z_0} F_{N}(\pv,\pm,s) \le \left(\sum_{\ell=0}^{p-1}\prod_{i=0}^{\ell} r_{i}(s)^{-1}\right)^{-1} \cdot \log \rho_{\alphabet_0}(\mathcal{L}_{\incmat,r})
    \end{align*}
    as desired.
    
    We now prove \eqref{eq:covering_maximality}$\le$\eqref{eq:irreducible_markov_maximality_dual}. To begin with, we fix onward $s^*$ to be a minimizer of \eqref{eq:irreducible_markov_maximality_dual} and hence also $r^* = r(s^*)$, $\lambda^{(i),j} = \lambda^{(i),j}(s^*)$, and $\nu^{(i)} = \nu^{(i)}(s^*)$ as given in \eqref{eq:ratio_form}, \eqref{eq:finite_maximizer}, and \eqref{eq:iteration}, respectively. In view of \eqref{eq:finite_approximation}, it suffices to prove the inequality holds with $F_N(\pv,\pm,s^*)$ in \eqref{eq:covering_maximality} replaced by $F_{N,pN}(\pv,\pm,s^*)$. Under the circumstances, for all $N \in \mathbb{N}$, we can choose, as in \cite[Proposition 3.3]{ban2023topological}, a maximizer $(\pv^*,\pm^*)$ of $F_{N,pN}(\pv,\pm,s^*)$ satisfying the following: For all $0 \le i < pN$,
    \[
    {\pm^*}^{(i)}_{a,b} = \begin{cases}
        \frac{\nu^{(i)}_b}{\sum_{c:\incmat_{a,c}=1} \nu^{(i)}_c} & \text{if } \incmat_{a,b}=1, a \in \alphabet_{-i-1};\\
        0 & \text{otherwise},
    \end{cases}
    \]
    We should note that this expression is independent of $N$. Furthermore, without loss of generality, we may assume $\pv[pN]=\stdvec{a_0}$ for all $N$ by passing to subsequence and changing our choice of $a_0 \in \alphabet_0$ if necessary. Next, we construct a periodic pair $(\tv^*,\tm^*)$ in the feasible domain of \eqref{eq:irreducible_markov_maximality_dual}. Suppose $v^{(j)} \in \mathcal{C}_j^+$, $j \in [p]$, are normalized eigenvectors given in Proposition \ref{prop:eigenspace} (b), which we again extend periodically by setting $v^{(j)}=v^{(j+p)}$ for all $j \in \mathbb{Z}$. Notably, due to the uniqueness of the eigenvectors, the following holds for all $j \in \mathbb{Z}$:
    \[
    v^{(j-1)} = \Psi_{\incmat,r_j}(v^{(j)}) / \Vert \Psi_{\incmat,r_j}(v^{(j)}) \Vert.
    \]
    Our $1$-periodic $\tm^* \in \alphsm^{\mathbb{Z}_+}$ is then defined by
    \begin{equation} \label{eq:optimal_transition}
        {\tm^*}^{(i)}_{a,b} = {\tm^*}^{(0)}_{a,b} := \begin{cases}
            \frac{\incmat_{a,b} v^{(j)}_{b}}{\sum_{c:c \in \alphabet_{j-1}} \incmat_{a,c} v^{(j)}_{c}} & \text{ if } a \in \alphabet_{j-1}; \\
            0 & \text{otherwise.} \\
        \end{cases}
    \end{equation}
    As ${\tm^*}^{(i)} = {\tm^*}^{(0)}$ are irreducible, each of the matrix is, by the Perron-Frobenius theorem, associated with a unique probability vector ${\tv^*}^{(i)} \in \alphpv \cap \mathcal{C}^+_j$ satisfying ${\tv^*}^{(i)} = {\tv^*}^{(i+1)} \tm[0]$ for all $i \in \mathbb{Z}_+$. To establish the aforementioned inequality, we need the following uniform estimate for the distance between $(\pv^*,\pm^*)$ and $(\tv^*,\tm^*)$: There exists $C > 0$ and $0 < \theta < 1$, independent of $N$, such that the following holds for all $n \in \mathbb{Z}_+$:
    \[
    \begin{cases}
        \sup_{a \in \alphabet_{-i-1}} \mathsf{d}_v(({\pm^*}^{(i)}_{a,b})_{b \in \alphabet}, ({\tm^*}^{(i)}_{a,b})_{b \in \alphabet}) \le C \theta^{i}, \\
        \mathsf{d}_v({\pv^*}^{(i)}, {\tv^*}^{(i)}) \le C (\theta^{i} + \theta^{pN - i}), \\
        |{\pv^*}^{(i+1)} \DKL({\pm^*}^{(i)}|| \incmat) - {\tv^*}^{(i+1)} \DKL({\tm^*}^{(i)}|| \incmat)| \le C (\theta^{i} + \theta^{pN - i}).
    \end{cases}
    \]
    The first estimate results from Proposition \ref{prop:eigenspace} (3). For the second, note that
    \begin{align*}
        &~\mathsf{d}_v\left({\pv^*}^{(i)}, {\tv^*}^{(i)} \right) = \mathsf{d}_v\left(\stdvec{a_0} \prod_{\ell=1}^{pN-i} {\pm^*}^{(pN-\ell)} , {\tv^*}^{(i)} \right) \\
        \le &~ \mathsf{d}_v\left(\stdvec{a_0}\prod_{\ell=1}^{pN-i} {\pm^*}^{(pN-\ell)}, \stdvec{a_0} \prod_{\ell=1}^{pN-i} {\tm^*}^{(pN-\ell)}\right) + \mathsf{d}_v\left(\stdvec{a_0} ({\tm^*}^{(0)})^{pN-i}, {\tv^*}^{(i)} \right) \\
        \le &~ C \cdot \frac{{\theta}^{i}}{1-\theta} + C' \cdot {\theta'}^{pN-i},
    \end{align*}
    where the first term in the last line follows from the first estimate and the second from the Perron-Frobenius theorem. Finally, the last estimate is a consequence of the former together with the observation $x \mapsto x \log x$ is Lipschitz on any compact subinterval of $(0,1)$. To complete the proof, note that $q_{i,N}(s^*) = O(1/N)$ and thus
    \begin{align*}
        &~ \norm{F_{N,pN}(\tv^*,\tm^*,s^*) - F_{N,pN}(\pv^*,\pm^*,s^*)} \le \sum_{i=0}^{pN-1} q_{i,N}(s^*) C (\theta^{i} + \theta^{pN - i}) \to 0 \text{ as } N \to \infty.
    \end{align*}

    The proof of the remaining inequalities is rather straightforward: \eqref{eq:covering_maximality}$\ge$\eqref{eq:irreducible_markov_maximality_dual} follows from definition, while \eqref{eq:irreducible_markov_maximality_dual}$\ge$\eqref{eq:irreducible_markov_maximality} is the weak duality. 
    
    To prove the irreducibility of optimal transition matrix, we first note that if $\incmat' \prec \incmat$, then $\mathcal{L}_{\incmat', r}$ still maps $\mathcal{C}_j$ into $\mathcal{C}_j$, and thus $\rho_{\alphabet_j}(\mathcal{L}_{\incmat', r})$ in Proposition \ref{prop:eigenspace} (2) are still well-defined. We then suppose that $(\pv^*,\pm^*,s^*)$ is an optimizer of \eqref{eq:irreducible_markov_maximality}, $r^*$ is a minimizer of \eqref{eq:spectral_radius_maximality} and ${v^{(0)}} \in \mathcal{C}_0$ is an eigenvector of $\mathcal{L}_{\incmat,r^*}$. Now, if $\pm[0]_{a',b'} = 0$ yet $\incmat_{a',b'} = 1$ for some $a',b' \in \alphabet$, we denote the incidence matrix of $\pm[0]_{a',b'}$ as $\incmat'$ and write 
    \[
    Z_0' = \{(\pv,\pm) \in Z_0: \pm[i]_{a',b'} = 0 \text{ for all } i \in \mathbb{Z}_+\}.
    \]
    We can then reproduce \eqref{eq:spectral_radius_maximality}$\ge$\eqref{eq:covering_maximality} as before:
    \begin{align*}
        & \inf_{r \in \mathcal{R}_{p,d}} \left(\sum_{\ell=0}^{p-1}\prod_{i=0}^{\ell} r_{i}^{-1}\right)^{-1} \cdot \log \rho_{\alphabet_0}(\mathcal{L}_{\incmat',r}) \ge \limsup_{N \to \infty} \min_{s \in \Gamma_{[p]}} \max_{(\pv,\pm) \in Z_0'} \sum_{m=0}^{p -1} s_m \sum_{n=0}^{pN-1} t^*_{n,N} f_{n+m}(\pv,\pm).
    \end{align*}
    Additionally, we note that \eqref{eq:covering_maximality}$\ge$\eqref{eq:irreducible_markov_maximality_dual}$\ge$\eqref{eq:irreducible_markov_maximality} follows naturally by definition. These altogether imply that
    \begin{equation} \label{eq:contradiction_inequality}
        \begin{aligned}
            & \inf_{r \in \mathcal{R}_{p,d}} \left(\sum_{\ell=0}^{p-1}\prod_{i=0}^{\ell} r_{i}^{-1}\right)^{-1} \cdot \log \rho_{\alphabet_0}(\mathcal{L}_{\incmat',r}) \\
            \ge & \max_{\substack{(\pv,\pm) \in Z_0: \\ \pm \text{ is } 1\text{-periodic} \\ \pv \text{ is } p\text{-periodic}}} \min_{s \in \Gamma_{[p]}} \sum_{m=0}^{p -1} s_m \mathcal{F}_{m:\infty}(\pv,\pm) = \sum_{m=0}^{p -1} s^*_m \mathcal{F}_{m:\infty}(\pv^*,\pm^*) \\
            = & \min_{r \in \mathcal{R}_{p,d}} \left(\sum_{\ell=0}^{p-1}\prod_{i=0}^{\ell} r_{i}^{-1}\right)^{-1} \cdot \log \rho_{\alphabet_0}(\mathcal{L}_{\incmat,r}).
        \end{aligned}
    \end{equation}
    However, since $\incmat$ is irreducible, there exists $n$ such that $\mathcal{L}^{n}_{\incmat',r^*}({v^{(0)}})_a < \rho_{\alphabet_0}(\mathcal{L}_{\incmat',r})^{n} {v^{(0)}_a}$ for all $a \in \alphabet_0$. As a consequence of the Collatz-Wielandt formula \cite[Theorem 5.6.1]{Lemmens2012} we deduce that
    \[
    \rho_{\alphabet_0}(\mathcal{L}_{\incmat',r^*}) = \rho_{\alphabet_0}(\mathcal{L}^n_{\incmat',r^*})^{1/n} \le \max_{a \in \alphabet_0} \left(\frac{\mathcal{L}^{n}_{\incmat',r^*}(v^{(0)})_a}{v^{(0)}_a}\right)^{1/n} < \rho_{\alphabet_0}(\mathcal{L}_{\incmat,r^*}).
    \]
    This implies
    \begin{align*}
        &\inf_{r \in \mathcal{R}_{p,d}} \left(\sum_{\ell=0}^{p-1}\prod_{i=0}^{\ell} r_{i}^{-1}\right)^{-1} \cdot \log \rho_{\alphabet_0}(\mathcal{L}_{\incmat',r}) \le  \left(\sum_{\ell=0}^{p-1}\prod_{i=0}^{\ell} {r^*_{i}}^{-1}\right)^{-1} \cdot \log \rho_{\alphabet_0}(\mathcal{L}_{\incmat',r^*}) \\
        < & \left(\sum_{\ell=0}^{p-1}\prod_{i=0}^{\ell} {r^*_{i}}^{-1}\right)^{-1} \cdot \log \rho_{\alphabet_0}(\mathcal{L}_{\incmat,r^*}) = \min_{r \in \mathcal{R}_{p,d}} \left(\sum_{\ell=0}^{p-1}\prod_{i=0}^{\ell} r_{i}^{-1}\right)^{-1} \cdot \log \rho_{\alphabet_0}(\mathcal{L}_{\incmat,r}),
    \end{align*}
    contradicting \eqref{eq:contradiction_inequality}. Hence, $\pm^* \sim \incmat$ is irreducible. In this case, the only feasible $\pv[0]$ is the left eigenvector of $(\pm[0])^p$ in $\mathcal{C}_0$.
\end{proof}

\section{Packing dimension of $\tshift[][\incmat]$ when $\incmat$ is irreducible} \label{sec:packing_dim}
When the incidence matrix $\mathbf{M}$ is irreducible, the packing dimension satisfies $\dim_P \tshift[][\incmat] = \overline{\dim}_B \tshift[][\incmat] = d \cdot h_{top}(\tshift[][\incmat])$, a result that, to the best of our knowledge, has not been explicitly addressed in the literature. For completeness, we provide a proof below.
\begin{proposition} \label{prop:packing_dim}
    Suppose that $\incmat$ is an irreducible incidence matrix. Then, $\dim_P \tshift[][\incmat] = \overline{\dim}_B \tshift[][\incmat] = d \cdot h_{top}(\tshift[][\incmat])$.
\end{proposition}
\begin{proof}
    Since $\dim_P \tshift[][\incmat] \le \overline{\dim}_B \tshift[][\incmat]$ by definition and $\overline{\dim}_B \tshift[][\incmat] = d \cdot h_{top}(\tshift[][\incmat])$ as discussed in Section \ref{sec:Hausdorff_dimension_irr}, it remains to show $\dim_P \tshift[][\incmat] \ge d \cdot h_{top}(\tshift[][\incmat])$.
    
    We recall that when $\incmat$ is irreducible, there exists $a \in \alphabet$ such that
    \begin{equation} \label{eq:irr_restriction}
        \lim_{n \to \infty} \frac{\log \left|\left\{u \in \block{pn}(\tshift[][\incmat]): u_\epsilon = a\right\}\right|}{\norm{\lattice{pn}}} = h_{top}(\tshift[][\incmat]),
    \end{equation}
    where $p$ is the common period of the states. This result appears as early as in \cite[Proposition 3.1]{Petersen2018b} and follows also as a consequence of \cite[Theorem 3.1]{ban2023topological}. We henceforth fix $a_0$ in assumption \eqref{eq:tree_assumption} to be one satisfying \eqref{eq:irr_restriction}. The remainder of the argument is based on the mass distribution principle and is somewhat similar to Section \ref{sec:lower_bound}.
    
    Due to irreducibility, there exists $n_0 \in \mathbb{N}$ and a collection $\mathcal{B}=\{(b^{a,i})_{0 \le i \le p n_0}$: $a \in \alphabet\}$ of admissible paths from $a$ to $a_0$ by incidence matrix $\incmat$, namely, $b^{a,0} = a$, $b^{a,p n_0} = a_0$, and $\incmat_{b^{a,i},b^{a,i+1}} = 1$ for all $i$. Choose increasing sequences $(\underline{N}_k)_{k \in \mathbb{N}}, (\overline{N}_k)_{k \in \mathbb{N}} \subset \mathbb{N}$ with each term divisible by $p$ such that $\lim_{k \to \infty} \underline{N}_{k+1}/\overline{N}_k = \infty$ and that
    \[
    0 = \overline{N}_0 < \underline{N}_1 < \overline{N}_1 := \underline{N}_1 + p n_0 < \underline{N}_2 < \overline{N}_2 := \underline{N}_2 + p n_0 < \cdots
    \]
    We iteratively define a measure $\mathbb{P}$ over cylinder sets $\mathcal{C}_{\overline{N}_k}$ ($k \in \mathbb{Z}_+$) that concentrates on configurations with states $a_0$ on the boundary:
    \begin{equation} \label{eq:boundary_condition}
        \mathbb{P}\left(\bigcup \left\{[u] \in \mathcal{C}_{\overline{N}_k}: u_g = a_0, \forall g \in \level{\overline{N}_k}\right\}\right) = 1.
    \end{equation}
    To begin, define $\mathbb{P}$ by $\mathbb{P}([a_0]) = 1$. Suppose that $\mathbb{P}$ is defined for $\mathcal{C}_{\overline{N}_k}$ and satisfies \eqref{eq:boundary_condition}, extend it to a measure over $\mathcal{C}_{\overline{N}_{k+1}}$ by first distributing uniform mass to each admissible configuration in $[u] \in \mathcal{C}_{\underline{N}_{k+1}}$:
    \begin{equation} \label{eq:free_choice_distribution}
        \begin{aligned}
            \mathbb{P}([u]) &= \mathbb{P}([u|_{\lattice{\overline{N}_k}}]) \left|\left\{v \in \block{\overline{N}_k}[\underline{N}_{k+1}]: v_g = a_0, \forall g \in \level{\overline{N}_k} \right\}\right|^{-1} \\
            & = \mathbb{P}([u|_{\lattice{\overline{N}_k}}]) \left(\left|\left\{v \in \block{\underline{N}_{k+1} - \overline{N}_k}: v_\epsilon = a_0\right\}\right|\right)^{-d^{\overline{N}_k}},
        \end{aligned}
    \end{equation}
    and then giving all its mass to the (unique) configuration $w \in \mathcal{C}_{\overline{N}_{k+1}}$ whose paths from level $\underline{N}_{k+1}$ to $\overline{N}_{k+1}$ are of the form of $b^{a,i}$:
    \[
    \mathbb{P}([w]) = \begin{cases}
        \mathbb{P}([w|_{\lattice{\underline{N}_{k+1}}}]) & \text{if } (w_g)_{g' \le g \le g''} \in \mathcal{B}, \forall g' \in \level{\underline{N}_{k+1}}, g'' \in \level{\overline{N}_{k+1}} \\
        0 & \text{otherwise}.
    \end{cases}
    \]
    In this manner, the resultant $\mathbb{P}$ satisfies \eqref{eq:boundary_condition} for configurations in $\mathcal{C}_{\overline{N}_{k+1}}$. This definition uniquely extends $\mathbb{P}$ to a probability measure supported in $\tshift[][\incmat]$. In addition, combining \eqref{eq:free_choice_distribution} with $\lim_{k \to \infty} \underline{N}_{k+1}/\overline{N}_k = \infty$ and \eqref{eq:irr_restriction}, we have almost surely,
    \[
    \limsup_{n \to \infty} \frac{\log \mathbb{P}(B_r(X))}{\log r} = \limsup_{n \to \infty} -\frac{\log \mathbb{P}([X|_{\lattice{n}}])}{\norm{\lattice{n-1}}} = d \cdot h_{top}(\tshift[][\incmat]).
    \]
    
    To conclude the proof, we apply the mass distribution principle to $\mathbb{P}$. Note that $\cup_{i=0}^{\infty} \mathcal{C}_{i}$ is a countable collection of sets and that for two cylinder sets, either they are disjoint or one is contained in the other. Hence, given any $\delta < d \cdot h_{top}(\tshift[][\incmat])$, any countable partition $(S_i)_{i\in\mathbb{N}}$ of $\mathrm{supp}(\mathbb{P})$, and every $\varepsilon > 0$, one can find for each $S_i$ a disjoint countable $\varepsilon$-cover $\mathcal{S}_i$ (and hence an $\varepsilon$-packing) satisfying
    \[
    r^{\delta} \le \mathbb{P}(\overline{B}_r(t)) \quad \text{for all } \overline{B}_r(t) \in \mathcal{S}_i,
    \] 
    where $\overline{B}_r(t)$ denotes the closed $r$-ball centered at $t$. This implies that the packing premeasure $P^{\delta}_0(S_i)$ is bounded from below by $\mathbb{P}(S_i)$:
    \[
    P^{\delta}_0(S_i) \ge \sum_{\overline{B}_r(t) \in \mathcal{S}_i} r^{\delta} \ge \sum_{\overline{B}_r(t) \in \mathcal{S}_i} \mathbb{P}(\overline{B}_r(t)) \ge \mathbb{P}(S_i),
    \]
    and therefore,
    \[
    \sum_{i=1}^{\infty} P^{\delta}_0(S_i) \ge 1.
    \]
    As a result, the packing measure $P^{\delta}(\mathrm{supp}(\mathbb{P}))$ is at least $1$. This proves $\dim_P \tshift[][\incmat] \ge \dim_P \mathrm{supp}(\mathbb{P}) \ge d \cdot h_{top}(\tshift[][\incmat])$.
\end{proof}

\section*{Acknowledgments}
Ban is partially supported by the National Science and Technology Council, R.O.C. (Contract No.~NSTC 111-2115-M-004-005-MY3) and National Center for Theoretical Science. Lai is partially supported by the National Science and Technology Council, R.O.C. (Contract No.~NSTC 111-2811-M-004-002-MY2). The authors extend their gratitude to the anonymous referee for helpful comments that greatly improve the readability of the manuscript.

\bibliographystyle{amsplain_abbrv}
\bibliography{ref}
\end{document}